\newtheorem{theorem}{Theorem}[section]
\newtheorem{lemma}[theorem]{Lemma}
\newtheorem{proposition}[theorem]{Proposition}
\newtheorem{corollary}[theorem]{Corollary}
\newtheorem{remark}[theorem]{Remark}
\newenvironment{proof}{%
\noindent{\it Proof.}\hskip 10 pt%
}{%
{\quad} \hfill$\Box$\\%
}
\def\og{\leavevmode\raise.3ex\hbox{$\scriptscriptstyle\langle\!\langle$~}}
\def\fg{\leavevmode\raise.3ex\hbox{~$\!\scriptscriptstyle\,\rangle\!\rangle$}}
\DeclareMathOperator{\des}{des}
\DeclareMathOperator{\rang}{rk}
\DeclareMathOperator{\Spec}{Spec}
\DeclareMathOperator{\Pic}{Pic}
\DeclareMathOperator{\pr}{pr}
\renewcommand{\det}{\mathrm{det}}
\DeclareMathOperator{\Id}{Id}
\DeclareMathOperator{\End}{End}
\begin{document}

\title{Maximal slope of tensor product of Hermitian vector bundles}
\author{{\sc Chen} Huayi\thanks{CMLS, Ecole Polytechnique, Palaiseau 91120, France. (huayi.chen@polytechnique.org)}}
\maketitle
\begin{abstract}
We give an upper bound for the maximal slope
of the tensor product of several non-zero
Hermitian vector bundles on the spectrum of
an algebraic integer ring. By Minkowski's
First Theorem, we need to estimate the
Arakelov degree of an arbitrary Hermitian
line subbundle $\overline M$ of the tensor
product. In the case where the generic fiber
of $M$ is semistable in the sense of
geometric invariant theory, the estimation is
established by constructing (through the
classical invariant theory) a special
polynomial which does not vanish on the
generic fibre of $M$. Otherwise we use an
explicte version of a result of Ramanan and
Ramanathan to reduce the general case to the
former one.
\end{abstract}

\section{Introduction}\label{Sec:Introduction}

\hskip\parindent It is well known that on a
projective and smooth curve defined over a
field of characteristic $0$, the tensor
product of two semistable vector bundles is
still semistable. This result has been
firstly proved by Narasimhan and Seshadri
\cite{Nara_Se65} by using analytic method in
the complex algebraic geometry framework.
Then this result has been reestablished by
Ramanan and Ramanathan
\cite{Ramanan_Ramanathan} in purely algebraic
context, through the geometric invariant
theory. Their method is based on a result of
Kempf \cite{Kempf78}, which has also been
independently obtained by Rousseau
\cite{Rousseau78}, generalizing the
Hilbert-Mumford criterion \cite{Mumford94} of
semistability in the sense of geometric
invariant theory. By reformulating the
results of Kempf and Ramanan-Ramanathan,
Totaro \cite{Totaro96} (see also
\cite{deShalit} for a review) has given a new
proof of a conjecture due to Fontaine
\cite{Fontaine79}, which had been firstly
proved by Faltings \cite{Faltings89}
asserting that the tensor product of two
semistable admissible filtered isocristals is
still semistable.

Let us go back to the case of vector bundles.
Consider a smooth projective curve $C$
defined over a field $k$. For any non-zero
vector bundle $E$ on $C$, the {\it slope} of
$E$ is defined as the quotient of its degree
by its rank and is denoted by $\mu(E)$. The
{\it maximal slope} $\mu_{\max}(E)$ of $E$ is
the maximal value of slopes of all non-zero
subbundles of $E$. By definition,
$\mu(E)\le\mu_{\max}(E)$. We say that $E$ is
{\it semistable} if the equality
$\mu(E)=\mu_{\max}(E)$ holds. If $E$ and $F$
are two non-zero vector bundles on $C$, then
$\mu(E\otimes F)=\mu(E)+\mu(F)$. The result
of Ramanan-Ramanathan
\cite{Ramanan_Ramanathan} implies that, if
$k$ is of characteristic $0$, then the
equality holds for maximal slopes, i.e.,
$\mu_{\max}(E\otimes
F)=\mu_{\max}(E)+\mu_{\max}(F)$. When the
characteristic of $k$ is positive, this
equality is not true in general (see
\cite{Gieseker73} for a counter-example).
Nevertheless, there always exists a constant
$a$ which only depends on $C$ such that
\begin{equation}\label{Equ:encadrement de mu max cas geometric}\mu_{\max}(E)+\mu_{\max}(F)\le\mu_{\max}(E\otimes
F)\le
\mu_{\max}(E)+\mu_{\max}(F)+a.\end{equation}

Hermitian vector bundles play in Arakelov
geometry the role of vector bundles in
algebraic geometry. Let $K$ be a number field
and $\mathcal O_K$ be its integer ring. We
denote by $\Sigma_\infty$ the set of all
embeddings of $K$ into $\mathbb C$. A
Hermitian vector bundle $\overline E=(E,h)$
on $\Spec\mathcal O_K$ is by definition a
projective $\mathcal O_K$-module of finite
type $E$ together with a family of Hermitian
metrics
$h=(\|\cdot\|_\sigma)_{\sigma\in\Sigma_\infty}$,
where for any $\sigma\in\Sigma_\infty$,
$\|\cdot\|_\sigma$ is a Hermitian norm on
$E\otimes_{\mathcal O_K,\sigma}\mathbb C$,
subject to the condition that the data
$(\|\cdot\|_\sigma)_{\sigma\in\Sigma_\infty}$
is invariant by the complex conjugation. That
is, for any $e\in E$, $z\in\mathbb C$ and
$\sigma\in\Sigma_\infty$, we have
$\|e\otimes\overline{z}\|_{\overline\sigma}=\|e\otimes
z \|_\sigma$.

The (normalized) {\it Arakelov degree} of a
Hermitian vector bundle $\overline E$ of rank
$r$ on $\Spec\mathcal O_K$ is defined as
\[\widehat{\deg}_n\overline
E=\frac{1}{[K:\mathbb Q]}
\Big(\log\#(E/\mathcal O_Ks_1+\cdots+\mathcal
O_Ks_r )-\frac
12\sum_{\sigma\in\Sigma_\infty}\log\det(
\left<s_i,s_j\right>_\sigma)\Big),\] where
$(s_1,\cdots,s_r)$ is an arbitrary element in
$E^r$ which defines a basis of $E_K$ over
$K$. This definition does not depend on the
choice of $(s_1,\cdots,s_r)$. The function
$\widehat{\deg}_n$ is invariant by any finite
extension of $K$. That is, if $K'/K$ is a
finite extension and if
$E'=E\otimes_{\mathcal O_K}\mathcal O_{K'}$,
then $\widehat{\deg}_n(\overline
E')=\widehat{\deg}_n(\overline E)$. The {\it
slope} of a non-zero Hermitian vector bundle
$\overline E$ on $\Spec\mathcal O_K$ is
defined as the quotient
$\widehat{\mu}(\overline
E):=\widehat{\deg}_n(\overline E)/\rang(E)$.
For more details, see \cite{BostBour96},
\cite{Bost2001}, \cite{Chambert}.

We say that a non-zero Hermitian vector
bundle $\overline E$ is {\it semistable} if
the {\it maximal slope}
$\widehat{\mu}_{\max}(\overline E)$ of
$\overline E$, defined as the maximal value
of slopes of its non-zero Hermitian
subbundles, equals its slope. If $\overline
E$ is a non-zero Hermitian vector bundle on
$\Spec\mathcal O_K$, Stuhler \cite{Stuhler76}
and Grayson \cite{Grayson84} have proved that
there exists a unique Hermitian subbundle
$\overline E_{\des}$ of $\overline E$ having
$\widehat{\mu}_{\max}(\overline E)$ as its
slope and containing all Hermitian subbundle
$\overline F$ of $\overline E$ such that
$\widehat{\mu}(\overline
F)=\widehat{\mu}_{\max}(\overline E)$.
Clearly $\overline E$ is semistable if and
only if $\overline E=\overline E_{\des}$. If
it is {\bf not} the case, then $\overline
E_{\des}$ is said to be the Hermitian
subbundle which {\it destabilizes} $\overline
E$.

In a lecture at Oberwolfach, J.-B. Bost
\cite{Bost97} has conjectured that the tensor
product of two semistable Hermitian vector
bundles on $\Spec\mathcal O_K$ is semistable.
This conjecture is equivalent to the
assertion that for any non-zero Hermitian
vector bundles $\overline E$ and $\overline
F$ on $\Spec\mathcal O_K$,
\[\widehat{\mu}_{\max}(\overline
E\otimes\overline
F)=\widehat{\mu}_{\max}(\overline
E)+\widehat{\mu}_{\max}(\overline F).\] We
always have the inequality
$\widehat{\mu}_{\max}(\overline
E\otimes\overline
F)\geq\widehat{\mu}_{\max}(\overline
E)+\widehat{\mu}_{\max}(\overline F)$. But
the inverse inequality remains open. Several
special cases of this conjecture have been
proved. Some estimations of type
\eqref{Equ:encadrement de mu max cas
geometric} have been established with error
terms depending on the ranks of the vector
bundles and on the number field $K$. We
resume some known results on this conjecture.

\begin{enumerate}[1)]
\item By definition of the maximal slope,
if $\overline E$ is a non-zero Hermitian
vector bundle and if $\overline L$ is a
Hermitian line bundle, that is, a Hermitian
vector bundle of rank one, then
\[\widehat{\mu}_{\max}(\overline
E\otimes\overline
L)=\widehat{\mu}_{\max}(\overline
E)+\widehat{\deg}_n(\overline
L)=\widehat{\mu}_{\max}(\overline
E)+\widehat{\mu}_{\max}(\overline L).\] The
geometric counterpart of this equality is
also true for positive characteristic case.
\item De Shalit and Parzanovski \cite{deShalit_Parzan} have
proved that, if $\overline E$ and $\overline
F$ are two semistable Hermitian vector
bundles on $\Spec\mathbb Z$ such that $\rang
E+\rang F\le 5$, then $\overline
E\otimes\overline F$ is semistable.
\item In \cite{BostBour96} (see also
\cite{Graftieaux00}), using the comparison of
a Hermitian vector bundle to a direct sum of
Hermitian line bundles, Bost has proved that
\[\widehat{\mu}_{\max}(\overline E_1\otimes\cdots\otimes\overline E_n)
\le\sum_{i=1}^n\Big(\widehat{\mu}_{\max}(\overline
E_i)+3\rang E_i\log(\rang E_i)\Big)
\]
for any family of non-zero Hermitian vector
bundles $(\overline E_i)_{i=1}^n$ on
$\Spec\mathcal O_K$.
\item Recently, Bost and K\"unnemann
\cite{Bost_Kunnemann} have proved that, if
$K$ is a number field and if $\overline E$
and $\overline F$ are two non-zero Hermitian
vector bundles on $\Spec\mathcal O_K$, then
\[\widehat{\mu}_{\max}(\overline E\otimes\overline F)
\le\widehat{\mu}_{\max}(\overline
E)+\widehat{\mu}_{\max}(\overline
F)+\frac{1}{2}\big(\log\rang E+\log\rang
F\big)+\frac{\log|\Delta_K|}{2[K:\mathbb Q]},\] where
$\Delta_K$ is the discriminant of $K$.
\end{enumerate}

We state the main result of this article as
follows:
\begin{theorem}\label{Thm:main theorem}
Let $K$ be a number field and $\mathcal O_K$
be its integer ring. If $(\overline
E_i)_{i=1}^n$ is a family of non-zero
Hermitian vector bundles on $\Spec\mathcal
O_K$, then
\begin{equation}\label{Equ:main theorem}
\widehat{\mu}_{\max}(\overline
E_1\otimes\cdots\otimes\overline
E_n)\le\sum_{i=1}^n\Big( \widehat{\mu}_{\max}(\overline
E_i)+\log(\rang E_i)\Big).
\end{equation}
\end{theorem}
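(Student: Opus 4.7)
I would follow the three-step strategy indicated in the abstract.

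\emph{Step 1 (reduction to Hermitian line subbundles).} The first move is to apply Minkowski's First Theorem to reduce the upper bound on $\widehat\mu_{\max}(\overline E)$, where $\overline E := \overline E_1 \otimes \cdots \otimes \overline E_n$, to an upper bound on $\widehat{\deg}_n(\overline M)$ for every Hermitian line subbundle $\overline M$ of $\overline E$. Indeed, given a Hermitian subbundle $\overline F \subset \overline E$ realising $\widehat\mu_{\max}(\overline E)$, Minkowski produces a non-zero section $s \in F$ of bounded norm, hence a Hermitian line subbundle $\overline M \subset \overline F$ generated by $s$ with $\widehat{\deg}_n(\overline M) \ge \widehat\mu(\overline F) - \varepsilon$. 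To suppress the $\log|\Delta_K|$ term intrinsic to Minkowski, I would pass asymptotically to the tensor powers $\overline E^{\otimes k}$, using $\widehat\mu_{\max}(\overline E^{\otimes k}) = k\,\widehat\mu_{\max}(\overline E)$, or exploit the invariance of $\widehat{\deg}_n$ and $\widehat\mu_{\max}$ under finite extensions of $K$ to dilute that term.

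\emph{Step 2 (GIT-semistable case via classical invariant theory).} Assume the generic fibre $M_K$ is a semistable point of $E_K := E_{1,K} \otimes \cdots \otimes E_{n,K}$ for the natural action of $G := \prod_i \mathrm{SL}(E_{i,K})$. Then there exists a $G$-invariant homogeneous polynomial $P$ on $E_K$ of multi-degree $(d,\ldots,d)$ with $P(m) \ne 0$ for any generator $m$ of $M_K$; the invariance forces $d$ to be a common multiple of the $\rang E_i$, and $P$ lives in the one-dimensional $\mathrm{SL}(E_{i,K})$-isotypic component of each $(E_{i,K}^\vee)^{\otimes d}$. Clearing denominators, $P$ yields a non-zero morphism of Hermitian line bundles
\[
\overline M^{\otimes d} \longrightarrow \bigotimes_{i=1}^n (\det \overline E_i)^{\otimes d/\rang E_i}.
\]
Combining the basic slope inequality for such a morphism with a careful choice of $P$ (a standard determinantal invariant) whose archimedean norm is bounded by roughly $d\sum_i \log\rang E_i$, this gives
\[
\widehat{\deg}_n(\overline M) \le \sum_{i=1}^n \widehat\mu(\overline E_i) + \sum_{i=1}^n \log \rang E_i,
\]
which settles this case since $\widehat\mu(\overline E_i) \le \widehat\mu_{\max}(\overline E_i)$.

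\emph{Step 3 (general case via Ramanan-Ramanathan).} If $M_K$ is not $G$-semistable, the Hilbert-Mumford-Kempf criterion produces a one-parameter subgroup $\lambda$ of $G$ along which $m$ degenerates to $0$. An explicit version of the Ramanan-Ramanathan reduction associates to $\lambda$ compatible filtrations of each $\overline E_i$ such that the associated graded $\bigotimes_i \mathrm{gr}(\overline E_i)$ contains a semistable lift of $M_K$ for the natural Levi action, and whose graded pieces have slopes at most $\widehat\mu_{\max}(\overline E_i)$. Applying Step~2 to the graded tensor product then recovers the required bound for $\overline M$.

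\emph{Main obstacle.} The hardest part will be the arithmetic version of Ramanan-Ramanathan in Step~3: one must transport their geometric argument into the Hermitian setting without incurring extra archimedean errors, and ensure that the graded configuration genuinely lies in the semistable locus handled by Step~2. A secondary difficulty is the tight $\log\rang E_i$ bound in Step~2, which demands an invariant polynomial with near-optimal archimedean norm and is what sharpens the earlier Bost-K\"unnemann estimate.
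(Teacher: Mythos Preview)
Your outline captures the broad strategy, but Step~3 contains a genuine gap that the paper handles quite differently.

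The claim that the Kempf destabilizing filtrations have ``graded pieces with slopes at most $\widehat\mu_{\max}(\overline E_i)$'' is unjustified and in general false: the destabilizing one-parameter subgroup is dictated by the GIT instability of $M_K$ over the generic fibre, not by Arakelov slopes, so a subquotient $E_j^{(i)}/E_{j+1}^{(i)}$ of $\overline E_i$ may well have slope exceeding $\widehat\mu_{\max}(\overline E_i)$. What the Ramanan--Ramanathan reduction actually delivers is a point that is semistable for the Levi relative to a line bundle twisted by determinants of the graded pieces with specific exponents $b_j^{(i)}$. Applying your Step~2 then yields a bound involving the weighted sum $\sum_{i,j} b_j^{(i)}\,\widehat\deg(\overline E_j^{(i)}/\overline E_{j+1}^{(i)})$, not individual slopes. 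The paper controls this by first \emph{assuming each $\overline E_i$ is Arakelov semistable} and invoking a Bogomolov-type criterion (Proposition~7.1): for any flag and any increasing sequence $\mathbf a$ with $\sum_j a_j r_j = 0$, one has $\sum_j a_j\,\widehat\deg(\overline E_j/\overline E_{j+1}) \le 0$. A separate fact about the minimizing filtrations, namely $\mathbb E[\mathcal F^{(i)}]=0$ (Proposition~6.8), is needed to make the weights satisfy this constraint and to collapse the residual log-terms back to $\sum_i \log r^{(i)}$. The Arakelov-semistability hypothesis is removed only afterward, by an independent passage through Harder--Narasimhan subquotients (Lemma~9.1). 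Your Step~3 conflates these two distinct filtrations and omits the Bogomolov ingredient entirely.

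Two smaller points. In Step~1 you invoke $\widehat\mu_{\max}(\overline E^{\otimes k}) = k\,\widehat\mu_{\max}(\overline E)$; this equality is precisely Bost's conjecture. Only the easy inequality $\ge$ is available (Corollary~2.5), and fortunately that is all the tensor-power trick needs to absorb the discriminant term---though note it does \emph{not} absorb the Minkowski rank term $\frac12\log\rang E$, which scales with $k$. In Step~2 the determinantal invariant has operator norm $\sqrt{d!}$ (Lemma~4.1), giving $\frac12\log r_i$ rather than $\log r_i$ in the line-subbundle bound; this sharper constant is exactly what is required so that, after adding the $\frac12\log\rang E=\frac12\sum_i\log r_i$ coming from Minkowski, one lands on $\sum_i\log r_i$ rather than $\frac32\sum_i\log r_i$.
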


The idea goes back to an article of Bost
\cite{Bost94} inspired by Bogomolov
\cite{Raynaud81}, Gieseker \cite{Gieseker77}
and Cornalba-Harris \cite{Cornalba_Harris}.
In an article of Gasbarri \cite{Gasbarri00}
appears also a similar idea. By Minkowski's
First Theorem, we reduce our problem to
finding an upper bound for the Arakelov
degree of an arbitrary Hermitian line
subbundle $\overline M$ of $\overline
E_1\otimes\cdots\otimes\overline E_n$. In the
case where $M_K$ is semistable (in the sense
of geometric invariant theory) for the action
of
$\mathbb{GL}(E_{1,K})\times\cdots\times\mathbb{GL}(E_{n,K})$,
the classical invariant theory gives
invariant polynomials with coefficients in
$\mathbb Z$ whose Archimedian norms are
``small''. The general case can be reduced to
the former one using an explicit version of a
result of Ramanan-Ramanathan
\cite{Ramanan_Ramanathan}.

The structure of this article is as follows.
In the second section we fix the notation and
present some preliminary results. In the
third section we recall the first principal
theorem in classical invariant theory and
discuss some generalizations in the case of
several vector spaces. We then establish in
the fourth section an upper bound for the
Arakelov degree of a Hermitian line subbundle
with semistable hypothesis. The fifth section
is contributed to some basic notions for
filtrations in the category of vector spaces.
Then in the sixth section, we state an
explicit version of a result of
Ramanan-Ramanathan in our context and,
following the method of Totaro, give a proof
for it. In the seventh section is presented a
criterion of semistability (for Hermitian
vector bundles) which is an arithmetic
analogue of a result of Bogomolov. In the
eighth section, we explain how to use the
result in previous sections to reduce the
majoration of the Arakelov degree of an
arbitrary Hermitian line subbundle to the
case with semistability hypothesis, which has
already been discussed in the fourth section.
Finally, we give the proof of Theorem
\ref{Thm:main theorem} in the ninth section.

The result presented here is part of my
doctorial thesis \cite{Chen_these},
supervised by J.-B. Bost. The ideas in this
article are largely inspired by his article
\cite{Bost94} and his personal notes. I would
like to thank him deeply for his instruction
and his sustained encouragement. During my
visit to {\it Institut Joseph Fourier} in
Grenoble, E. Gaudron pointed out to me that
the method in this article, combined with his
recent result \cite{Gaudron07}, leads to an
estimation which is similar to
\eqref{Equ:main theorem} for the tensor
product of Adelic vector bundles. I am
grateful to him for discussions and for
suggestions. I would also like to express my
gratitude to the referee for his/her very
careful reading and for his/her numerous
useful suggestions to improve the writing of
this article.

\section{Notation and preliminary results}

\hskip\parindent Throughout this article, if
$K$ is a field and if $V$ is a vector space
of finite rank over $K$, we denote by
$\mathbb P(V)$ the $K$-scheme which
represents the functor
\begin{equation}\label{Equ:representable functor
projective space}
\begin{array}{ccc}
\mathbf{Schemes}/K&\longrightarrow&\mathbf{Sets}\\
(p:S\rightarrow\Spec
K)&\longmapsto&\Big\{\parbox{3.2cm}{\begin{center}locally
free quotient of rank $1$ of
$p^*V$\end{center}}\Big\}
\end{array}
\end{equation}
In particular, $\mathbb P(V)(K)$ classifies
all hyperplanes in $V$, or equivalently, all
lines in $V^\vee$. We denote by $\mathcal
O_V(1)$ the canonical line bundle on $\mathbb
P(V)$. In other words, if $\pi:\mathbb
P(V)\rightarrow\Spec K$ is the structural
morphism, then $\mathcal O_V(1)$ is the
quotient of $\pi^*V$ defined by the universal
object of the representable functor
\eqref{Equ:representable functor projective
space}. For any
integer $m\ge 1$, we use the expression
$\mathcal O_V(m)$ to denote the line bundle
$\mathcal O_V(1)^{\otimes m}$.

Let $G$ be an algebraic group over $\Spec K$
and $X$ be a projective variety over $\Spec
K$. Suppose that $G$ acts on $X$ and that $L$
is an ample $G$-linearized line bundle on
$X$. We say that a rational point $x$ of $X$
is {\it semistable} for the action of $G$
relatively to $L$ if there exists an integer
$D\ge 1$ and a section $s\in H^0(X,L^{\otimes
D})$ invariant by the action of $G$ such that
$x$ lies in the open subset of $X$ defined by
the non-vanishing of $s$. Clearly $x$ is
semistable for the action of $G$ relatively
to $L$ if and only if it is semistable for
the action of $G$ relatively to any strictly
positive tensor power of $L$.

In particular, if $G(K)$ acts linearly on a
vector space $V$ of finite rank over $K$,
then the action of $G$ on $V$ induces
naturally an action of $G$ on $\mathbb P(V)$,
and $\mathcal O_V(1)$ becomes a
$G$-linearized line bundle. Let $R$ be a
vector subspace of rank $1$ of $V^\vee$,
which is viewed as a point in $\mathbb
P(V)(K)$. Then $R$ is semistable for the
action of $G$ relatively to $\mathcal O_V(1)$
if and only if there exists an integer $m\ge
1$ and a non-zero section $s\in H^0(\mathbb
P(V),\mathcal O_V(m))=S^mV$ which is
invariant by the action of $G(K)$ such that
the composed homomorphism $\xymatrix{\relax
R\ar[r]&V^\vee\ar[r]^-{s}&K}$ is non-zero.

We present some estimations for maximal
slopes in geometric case. Let $k$ be an
arbitrary field and $C$ be a smooth
projective curve of genus $g$ defined over
$k$. Let
$b=\min\{\deg(L)\;|\;L\in\Pic(C),\;L\text{ is
ample}\}$ and let $a=b+g-1$.

\begin{lemma}
Let $E$ be a non-zero vector bundle on $C$.
If $H^0(C,E)=0$, then $\mu_{\max}(E)\le g-1$.
\label{Lem:annulation de H0 et borne de mu
max}
\end{lemma}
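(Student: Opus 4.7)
The plan is a short contrapositive argument relying on Riemann--Roch. Assume, for contradiction, that $\mu_{\max}(E)>g-1$, and construct a non-zero global section of $E$.

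First, I would unfold the definition of $\mu_{\max}$: since the maximal slope is attained (e.g.\ by the first step of the Harder--Narasimhan filtration of $E$, or by directly noting that the supremum is achieved on the finite set of slopes arising in such a filtration), there exists a non-zero subbundle $F\subseteq E$ with $\mu(F)>g-1$, i.e.\ $\deg F>\rang(F)(g-1)$.

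Next I would apply the Riemann--Roch theorem to $F$ on the curve $C$:
\[
h^0(C,F)-h^1(C,F)=\deg F+\rang(F)(1-g)=\rang(F)\bigl(\mu(F)-(g-1)\bigr).
\]
The right-hand side is strictly positive by the choice of $F$, hence $h^0(C,F)\ge 1$. Since $F$ is a subbundle of $E$, the inclusion $F\hookrightarrow E$ induces an injection $H^0(C,F)\hookrightarrow H^0(C,E)$, so $H^0(C,E)\neq 0$, contradicting the hypothesis. This yields $\mu_{\max}(E)\le g-1$.

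There is no serious obstacle here: the whole content of the lemma is that a sheaf all of whose slopes exceed $g-1$ has positive Euler characteristic (Riemann--Roch), and the global sections of its destabilizing subbundle sit inside those of $E$. The only point to mention carefully is the attainment of $\mu_{\max}$ by an actual subbundle, which is standard in the geometric setting.
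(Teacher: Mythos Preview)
Your proof is correct and is essentially the same as the paper's: both use Riemann--Roch on a subbundle $F$ to force $H^0(C,F)\neq 0$ once $\mu(F)>g-1$, and then pass to $H^0(C,E)$ via the inclusion. The only cosmetic difference is that the paper argues directly (every subbundle has $\mu(F)\le g-1$) while you phrase it as a contrapositive via a subbundle realizing $\mu_{\max}$.
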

\begin{proof}
Since $H^0(C,E)=0$, for any non-zero
subbundle $F$ of $E$, we also have
$H^0(C,F)=0$. Recall that the Riemann-Roch
theorem asserts that
\[\rang_k H^0(C,F)-\rang_k H^1(C,F)=\deg(F)+\rang(F)(1-g).\]
Then $\deg(F)+\rang(F)(1-g)\le 0$, which
implies $\mu(F)\le g-1$. Since $F$ is
arbitrary, $\mu_{\max}(E)\le g-1$.
\end{proof}

\begin{proposition}
\label{Pro:encadrement de mu max otimes} For
any non-zero vector bundles $E$ and $F$ on
$C$, we have the inequality
\begin{equation*}
\mu_{\max}(E)+\mu_{\max}(F)\le\mu_{\max}(E\otimes
F )\le \mu_{\max}(E)+\mu_{\max}(F)+a,
\end{equation*}
where $a=b+g-1$ only depends on $C$.
\end{proposition}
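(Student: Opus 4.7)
The lower bound is immediate: I would take subbundles $E'\subset E$ and $F'\subset F$ with $\mu(E')=\mu_{\max}(E)$ and $\mu(F')=\mu_{\max}(F)$; since tensoring with a locally free sheaf is exact, $E'\otimes F'$ embeds into $E\otimes F$, producing a subsheaf of slope $\mu_{\max}(E)+\mu_{\max}(F)$, whence $\mu_{\max}(E\otimes F)\ge\mu_{\max}(E)+\mu_{\max}(F)$.

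For the upper bound, my plan is to twist $E\otimes F$ by a line bundle $M^\vee$ of suitable degree in order to kill global sections, and then apply Lemma \ref{Lem:annulation de H0 et borne de mu max}. Let $G\subset E\otimes F$ be an arbitrary non-zero subbundle. I would choose a line bundle $M$ on $C$ whose degree $d$ satisfies
\[
\mu_{\max}(E)+\mu_{\max}(F)<d\le\mu_{\max}(E)+\mu_{\max}(F)+b,
\]
which is possible because on a smooth projective curve ample is equivalent to positive degree, so the image of $\deg\colon\Pic(C)\to\mathbb Z$ is $b\mathbb Z$ and every real threshold can be exceeded within a gap of $b$. The key step is then to verify $H^0(C,E\otimes F\otimes M^\vee)=0$. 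By tensor--Hom adjunction, this group equals $\mathrm{Hom}(M\otimes F^\vee,E)$; since $M$ is a line bundle, $\mu_{\min}(M\otimes F^\vee)=d-\mu_{\max}(F)>\mu_{\max}(E)$, and the standard fact that any non-zero morphism $A\to B$ of vector bundles satisfies $\mu_{\min}(A)\le\mu_{\max}(B)$---proved by comparing the Harder--Narasimhan filtrations of $A$ and $B$ and invoking the vanishing of $\mathrm{Hom}$ between semistable bundles whose source has strictly larger slope---yields the required vanishing.

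Given this, applying Lemma \ref{Lem:annulation de H0 et borne de mu max} to $E\otimes F\otimes M^\vee$ gives $\mu_{\max}(E\otimes F\otimes M^\vee)\le g-1$. Since $G\otimes M^\vee$ is a subbundle of slope $\mu(G)-d$, I would conclude
\[
\mu(G)\le d+g-1\le\mu_{\max}(E)+\mu_{\max}(F)+b+g-1=\mu_{\max}(E)+\mu_{\max}(F)+a,
\]
and the arbitrariness of $G$ finishes the proof. The main delicacy I anticipate is the controlled choice of $d$: the Hom-vanishing criterion forces the strict inequality $d>\mu_{\max}(E)+\mu_{\max}(F)$, and the discreteness of $\Pic(C)$ together with the requirement of ampleness inflates the excess by $b$, while the Lemma contributes the further $g-1$, producing exactly the constant $a=b+g-1$.
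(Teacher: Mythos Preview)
Your proof is correct and follows essentially the same route as the paper: the lower bound via $E'\otimes F'$ is identical, and for the upper bound both you and the paper twist by a line bundle chosen within a window of width $b$, establish the vanishing of $H^0$ (equivalently of $\mathrm{Hom}$) via the image of a putative non-zero morphism, and then invoke Lemma~\ref{Lem:annulation de H0 et borne de mu max}. The only cosmetic difference is that the paper argues the Hom-vanishing by dualizing the image $G$ of a map $E'^\vee\to E''$ to get $\mu(G^\vee)\le\mu_{\max}(E')$ and $\mu(G)\le\mu_{\max}(E'')$, whereas you phrase the same step as $\mu_{\min}(M\otimes F^\vee)>\mu_{\max}(E)$; these are equivalent formulations of the same elementary observation.
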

\begin{proof}
1) Let $E_1$ be a subbundle of $E$ such that
$\mu(E_1)=\mu_{\max}(E)$ and let $F_1$ be a
subbundle of $F$ such that
$\mu(F_1)=\mu_{\max}(F)$. Since $E_1\otimes
F_1$ is a subbundle of $E\otimes F$, we
obtain
\[\mu_{\max}(E)+\mu_{\max}(F)=\mu(E_1)+\mu(F_1)
=\mu(E_1\otimes F_1)\le\mu_{\max}(E\otimes F
),\] which is the first inequality.

2) We first prove that, if $E'$ and $E''$ are
two non-zero vector bundles on $C$ such that
$\mu_{\max}(E')+\mu_{\max}(E'')<0$, then
$\mu_{\max}(E'\otimes E'')\le g-1$. In fact,
if $\mu_{\max}(E'\otimes E'')> g-1$, then by
Lemma \ref{Lem:annulation de H0 et borne de
mu max}, $H^0(C,E'\otimes E'')\neq 0$.
Therefore, there exists a non-zero
homomorphism $\varphi$ from ${E'}^\vee$ to
$E''$. Let $G$ be the image of $\varphi$,
which is non-zero since $\varphi$ is
non-zero. The vector bundle $G$ is a
subbundle of $E''$ and a quotient bundle of
${E'}^\vee$. Hence $G^\vee$ is a subbundle of
${E'}^{\vee\vee}\cong E'$. Therefore, we have
$\mu(G)\le\mu_{\max}(E'')$ and
$\mu(G^\vee)=-\mu(G)\le\mu_{\max}(E')$. By
taking the sum, we obtain
$\mu_{\max}(E')+\mu_{\max}(E'')\ge 0$.

We now prove the second inequality in the
proposition. By definition of $b$, there
exists a line bundle $M$ such that
$-b\le\mu_{\max}(E)+\mu_{\max}(F)+\deg(M)=
\mu_{\max}(E\otimes M)+\mu_{\max}(F)<0$.
Then, by combining the previously proved
result, we obtain $\mu_{\max}(E\otimes
M\otimes F)\le g-1$. Therefore,
\[\mu_{\max}(E\otimes F)\le g-1-\deg(M)\le
\mu_{\max}(E)+\mu_{\max}(F)+g+b-1.\]
\end{proof}

We now recall some classical results in
Arakelov theory, which will be useful
afterwards. We begin by introducing the
notation.

Let $\overline E$ be a Hermitian vector
bundle on $\Spec\mathcal O_K$. For any finite
place $\mathfrak p$ of $K$, we denote by
$K_{\mathfrak p}$ the completion of $K$ with
respect to $\mathfrak p$, equipped with the
absolute value $|\cdot|_{\mathfrak p}$ which
is normalized as $|\cdot|_{\mathfrak p
}=\#(\mathcal O_K/\mathfrak p)^{-v_{\mathfrak
p}(\cdot)}$ with $v_{\mathfrak p}$ being the
discrete valuation associated to $\mathfrak
p$. The structure of $\mathcal O_K$-module on
$E$ induces naturally a norm
$\|\cdot\|_{\mathfrak p}$ on $E_{K_{\mathfrak
p}}:=E\otimes_KK_{\mathfrak p }$ such that
$E_{K_{\mathfrak p}}$ becomes a Banach space
over $K_{\mathfrak p}$.

If $\overline L$ is a Hermitian line bundle
on $\Spec\mathcal O_K$ and if $s$ is an
arbitrary non-zero element in $L$, then
\[\widehat{\deg}_n(\overline L)=\frac{1}{[K:\mathbb Q]}
\Big(\log\#(L/\mathcal
O_Ks)-\sum_{\sigma:K\rightarrow\mathbb C
}\log\|s\|_\sigma\Big),\] which can also be
written as
\begin{equation}\label{Equ:degre d'Arakelove of a line bundle}
\widehat{\deg}_n(\overline L
)=-\frac{1}{[K:\mathbb
Q]}\Big(\sum_{\mathfrak
p}\log\|s\|_{\mathfrak
p}+\sum_{\sigma:K\rightarrow\mathbb
C}\log\|s\|_{\sigma}\Big).
\end{equation}
Note that this formula is analogous to the
degree function of a line bundle on a smooth
projective curve. Similarly to the geometric
case, for any Hermitian vector bundle
$\overline E$ of rank $r$ on $\Spec\mathcal
O_K$, we have
\begin{equation}\label{Equ:degre of determiant equals to degre}
\widehat{\deg}_n(\overline
E)=\widehat{\deg}_n(\Lambda^{r}\overline
E)\end{equation} where $\Lambda^r\overline E$
is the $r^{\text{th}}$ exterior power of
$\overline E$, that is, the {\it determinant}
of $\overline E$, which is a Hermitian line
bundle. Furthermore, if
$\xymatrix{0\ar[r]&\overline E'\ar[r]&
\overline E\ar[r]&\overline E''\ar[r]&0}$ is
a short exact sequence of Hermitian vector
bundles on $\Spec\mathcal O_K$, the following
equality holds:
\begin{equation}\label{Equ:degree et suite
exacte courte} \widehat{\deg}_n(\overline
E)=\widehat{\deg}_n( \overline
E')+\widehat{\deg}_n(\overline
E'').\end{equation}

\begin{lemma}
If $\overline E$ and $\overline F$ are two
Hermtian vector bundles of ranks $r_1$ and
$r_2$ on $\Spec\mathcal O_K$, respectively.
Then
\begin{equation}\label{Equ: degre d'Arakeovldu produt tensorei}
\widehat{\deg}_n(\overline E\otimes\overline
F)=\rang(E)\widehat{\deg}_n(\overline
F)+\rang(F)\widehat{\deg}_n(\overline E).
\end{equation}
\end{lemma}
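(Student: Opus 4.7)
The plan is to reduce the identity to a computation on determinants. By the equality \eqref{Equ:degre of determiant equals to degre}, we have
\[
\widehat{\deg}_n(\overline{E}\otimes\overline{F})
=\widehat{\deg}_n\bigl(\Lambda^{r_1r_2}(\overline{E}\otimes\overline{F})\bigr),
\qquad
\widehat{\deg}_n(\overline{E})=\widehat{\deg}_n(\det\overline{E}),
\qquad
\widehat{\deg}_n(\overline{F})=\widehat{\deg}_n(\det\overline{F}),
\]
so it suffices to produce a canonical isometric isomorphism of Hermitian line bundles
\[
\Lambda^{r_1r_2}(\overline{E}\otimes\overline{F})
\;\cong\;(\det\overline{E})^{\otimes r_2}\otimes(\det\overline{F})^{\otimes r_1}
\]
and then invoke additivity of $\widehat{\deg}_n$ on tensor products of Hermitian line bundles.

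First I would record the underlying algebraic isomorphism of $\mathcal{O}_K$-modules
\[
\Lambda^{r_1r_2}(E\otimes F)\;\cong\;(\det E)^{\otimes r_2}\otimes(\det F)^{\otimes r_1},
\]
which is constructed locally by choosing bases $(e_i)_{i=1}^{r_1}$, $(f_j)_{j=1}^{r_2}$, sending $\bigwedge_{(i,j)}(e_i\otimes f_j)$ (ordered lexicographically) to $(e_1\wedge\cdots\wedge e_{r_1})^{\otimes r_2}\otimes(f_1\wedge\cdots\wedge f_{r_2})^{\otimes r_1}$ up to an overall sign, and checking independence of the choice of bases. Next I would verify that this map is an isometry at each $\sigma\in\Sigma_\infty$: picking $(e_i)$, $(f_j)$ orthonormal for $\|\cdot\|_\sigma$ on $E_\sigma$ and $F_\sigma$, the family $(e_i\otimes f_j)_{i,j}$ is orthonormal for the tensor product metric on $E_\sigma\otimes F_\sigma$, hence $\bigwedge_{(i,j)}(e_i\otimes f_j)$ has norm $1$ in the exterior power metric, while the image has norm $1$ on the right-hand side by definition of the tensor power metric on the determinants. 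The sign ambiguity does not affect norms.

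Once the isometric isomorphism is established, additivity gives
\[
\widehat{\deg}_n\bigl((\det\overline{E})^{\otimes r_2}\otimes(\det\overline{F})^{\otimes r_1}\bigr)
=r_2\,\widehat{\deg}_n(\det\overline{E})+r_1\,\widehat{\deg}_n(\det\overline{F}),
\]
which follows from formula \eqref{Equ:degre d'Arakelove of a line bundle} applied to $s_1^{\otimes r_2}\otimes s_2^{\otimes r_1}$ for any non-zero $s_1\in\det E$, $s_2\in\det F$. Combining with \eqref{Equ:degre of determiant equals to degre} yields the desired formula.

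The only genuine step is the verification that the determinantal isomorphism is an isometry at each archimedean place; this is a bookkeeping exercise on orthonormal bases and constitutes the main obstacle, but no substantial difficulty arises. An alternative, more pedestrian route would be an induction on $r_1+r_2$ using \eqref{Equ:degree et suite exacte courte} applied to the short exact sequence $0\to E'\otimes F\to E\otimes F\to E''\otimes F\to 0$ derived from a saturated Hermitian subbundle $\overline{E'}\subset\overline{E}$ of corank one; however this approach would require separately treating the archimedean metric on the quotient, so the determinantal argument is preferable.
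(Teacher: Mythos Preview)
Your proposal is correct and follows exactly the same approach as the paper: reduce to the isomorphism of determinant Hermitian line bundles $\Lambda^{r_1r_2}(\overline E\otimes\overline F)\cong(\det\overline E)^{\otimes r_2}\otimes(\det\overline F)^{\otimes r_1}$ and then take Arakelov degrees via \eqref{Equ:degre of determiant equals to degre}. The paper's proof is a one-line invocation of this isomorphism (with a typo, writing $\Lambda^{r_1+r_2}$ for $\Lambda^{r_1r_2}$); your version simply spells out the isometry check at the archimedean places and the additivity on line bundles more explicitly.
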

\begin{proof}
The determinant Hermitian line bundle
$\Lambda^{r_1+r_2}(\overline
E\otimes\overline F)$ is isomorphic to
$(\Lambda^{r_1}\overline E)^{\otimes
r_2}\otimes(\Lambda^{r_2}\overline
F)^{\otimes r_1}$. Taking Arakelov degree and
using \eqref{Equ:degre of determiant equals
to degre} we obtain \eqref{Equ: degre
d'Arakeovldu produt tensorei}.
\end{proof}

We establish below the arithmetic analogue to
the first inequality in Proposition
\ref{Pro:encadrement de mu max otimes}.
\begin{proposition}
Let $\overline E$ and $\overline F$ be two
non-zero Hermitian vector bundles on
$\Spec\mathcal O_K$. Then
\[\widehat{\mu}_{\max}(\overline
E)+\widehat{\mu}_{\max}(\overline F)\le
\widehat{\mu}_{\max}(\overline
E\otimes\overline F).\]
\end{proposition}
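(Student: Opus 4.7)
The plan is to mimic the argument from part 1) of Proposition \ref{Pro:encadrement de mu max otimes}, simply replacing the geometric slope by the arithmetic one and the geometric tensor product formula by the Arakelov-theoretic identity \eqref{Equ: degre d'Arakeovldu produt tensorei}.

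First I would invoke the Stuhler--Grayson theorem recalled in the introduction: it guarantees that the maximal slope is actually attained, so there exist non-zero Hermitian subbundles $\overline{E}_1 \subset \overline E$ and $\overline{F}_1 \subset \overline F$ with $\widehat{\mu}(\overline{E}_1) = \widehat{\mu}_{\max}(\overline E)$ and $\widehat{\mu}(\overline{F}_1) = \widehat{\mu}_{\max}(\overline F)$. (For this one inequality one does not even need uniqueness of the destabilizing subbundle; any subbundle realizing the maximum suffices.)

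Next I would observe that $\overline{E}_1 \otimes \overline{F}_1$ is a Hermitian subbundle of $\overline{E} \otimes \overline{F}$: at the level of $\mathcal O_K$-modules the inclusion $E_1 \otimes F_1 \hookrightarrow E \otimes F$ is the tensor product of the two inclusions (the underlying $\mathcal O_K$-module is saturated, because saturation can be checked locally at each finite place), while at each archimedean place the metric restricted from the tensor-product metric on $E \otimes_{\mathcal O_K,\sigma} \mathbb C \otimes F \otimes_{\mathcal O_K,\sigma}\mathbb C$ to $E_{1} \otimes_{\mathcal O_K,\sigma} \mathbb C \otimes F_{1} \otimes_{\mathcal O_K,\sigma}\mathbb C$ coincides with the tensor-product metric built from the restricted metrics on $\overline E_1$ and $\overline F_1$.

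Then formula \eqref{Equ: degre d'Arakeovldu produt tensorei} applied to $\overline E_1$ and $\overline F_1$ gives
\[
\widehat{\mu}(\overline{E}_1 \otimes \overline{F}_1)
= \widehat{\mu}(\overline{E}_1) + \widehat{\mu}(\overline{F}_1)
= \widehat{\mu}_{\max}(\overline{E}) + \widehat{\mu}_{\max}(\overline{F}),
\]
and since $\overline{E}_1 \otimes \overline{F}_1$ is a non-zero Hermitian subbundle of $\overline{E} \otimes \overline{F}$, the left-hand side is at most $\widehat{\mu}_{\max}(\overline{E} \otimes \overline{F})$ by definition. There is no genuine obstacle here; the only point requiring some care is the verification that the tensor product of two saturated Hermitian subbundles is again a saturated Hermitian subbundle of the tensor product with the correct induced metric, and this is a routine local check.
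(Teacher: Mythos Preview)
Your argument is correct and matches the paper's own proof essentially line for line: the paper also picks subbundles $\overline E_{\des}$ and $\overline F_{\des}$ realizing the maximal slopes, observes that their tensor product is a Hermitian subbundle of $\overline E\otimes\overline F$, and applies \eqref{Equ: degre d'Arakeovldu produt tensorei}. Your remark on saturation is fine but not needed, since $\widehat{\mu}_{\max}$ is defined as a supremum over all non-zero Hermitian subbundles, saturated or not.
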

\begin{proof}
Let $\overline E_{\des}$ and $\overline
F_{\des}$ be the Hermitian subbundles of
$\overline E$ and of $\overline F$
respectively as defined in Section
\ref{Sec:Introduction}. By definition,
$\widehat{\mu}(\overline
E_{\des})=\widehat{\mu}_{\max}(\overline E)$
and $\widehat{\mu}(\overline
F_{\des})=\widehat{\mu}_{\max}(\overline F)$.
Since $\overline E_{\des}\otimes\overline
F_{\des}$ is a Hermitian vector subbundle of
$\overline E\otimes \overline F$, we obtain
\[\widehat{\mu}_{\max}(\overline
E)+\widehat{\mu}_{\max}(\overline F)=
\widehat{\mu}(\overline E_{\des})+
\widehat{\mu}(\overline F_{\des})=
\widehat{\mu}(\overline
E_{\des}\otimes\overline
F_{\des})\le\widehat{\mu}_{\max}(\overline
E\otimes\overline F),\] where the second
equality results from \eqref{Equ: degre
d'Arakeovldu produt tensorei}.
\end{proof}

\begin{corollary}\label{Cor:minormation de maximal pent of tensor produc}
Let $(\overline E_i)_{1\le i\le n}$ be a
finite family of non-zero Hermitian vector
bundles on $\Spec\mathcal O_K$. Then the
following equality holds:
\begin{equation}
\widehat{\mu}_{\max}(\overline E_1)+ \cdots+
\widehat{\mu}_{\max}(\overline E_n)\le
\widehat{\mu}_{\max}(\overline
E_1\otimes\cdots\otimes\overline E_n).
\end{equation}
\end{corollary}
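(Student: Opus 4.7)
The plan is a straightforward induction on $n$, using the preceding proposition (the case $n=2$) as the inductive step. The base case $n=1$ is trivial, since both sides reduce to $\widehat{\mu}_{\max}(\overline E_1)$.

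For the inductive step, I would set $\overline F = \overline E_1 \otimes \cdots \otimes \overline E_{n-1}$ and apply the inductive hypothesis to get
\[\widehat{\mu}_{\max}(\overline F) \ge \widehat{\mu}_{\max}(\overline E_1) + \cdots + \widehat{\mu}_{\max}(\overline E_{n-1}).\]
Then the preceding proposition, applied to the pair $(\overline F, \overline E_n)$, gives
\[\widehat{\mu}_{\max}(\overline F \otimes \overline E_n) \ge \widehat{\mu}_{\max}(\overline F) + \widehat{\mu}_{\max}(\overline E_n).\]
Combining the two inequalities yields the desired bound, since tensor product is associative and $\overline F \otimes \overline E_n$ is canonically isomorphic to $\overline E_1 \otimes \cdots \otimes \overline E_n$ as Hermitian vector bundles (the Hermitian metrics on tensor products are built coordinate-wise and are compatible with associativity).

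There is no real obstacle here; this is essentially a formal consequence of the $n=2$ case. The only point worth checking is that the associativity isomorphism indeed preserves the Hermitian structure, which is immediate from the definition of the tensor product metric at each Archimedean place. Thus the corollary follows by a one-line induction argument.
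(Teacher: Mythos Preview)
Your proof is correct and follows exactly the approach the paper intends: the corollary is stated without proof immediately after the $n=2$ proposition, and the obvious induction you describe is precisely how one passes from the two-factor case to the general case.
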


Let $\overline E$ and $\overline F$ be two
Hermitian vector bundles and
$\varphi:E_K\rightarrow F_K$ be a non-zero
$K$-linear homomorphism. For any finite place
$\mathfrak p$ of $K$, we denote by
$h_{\mathfrak p}(\varphi)$ the real number
$\log\|\varphi_{\mathfrak p}\|$, where
$\varphi_{\mathfrak p}:E_{K_{\mathfrak
p}}\rightarrow F_{K_{\mathfrak p}}$ is
induced from $\varphi$ by scalar extension.
Note that if $\varphi$ is induced by an
$\mathcal O_K$-homomorphism from $E$ to $F$,
then $h_{\mathfrak p}(\varphi)\le 0$ for any
finite place $\mathfrak p$. Similarly, for
any embedding $\sigma:K\rightarrow\mathbb C$,
we define
$h_{\sigma}(\varphi)=\log\|\varphi_\sigma\|$,
where $\varphi_{\sigma}:E_{\sigma,\mathbb
C}\rightarrow F_{\sigma,\mathbb C}$ is given
by the scalar extension $\sigma$. Finally, we
define the {\it height} of $\varphi$ as
\[h(\varphi)=\frac{1}{[K:\mathbb Q]}\Big(
\sum_{\mathfrak p}h_{\mathfrak p}(\varphi)+
\sum_{\sigma:K\rightarrow\mathbb
C}h_{\sigma}(\varphi)\Big).\]

\begin{proposition}[\cite{BostBour96}]
Let $\overline E$ and $\overline F$ be two
Hermitian vector bundles on $\Spec\mathcal
O_K$ and $\varphi:E_K\rightarrow F_K$ be a
$K$-linear homomorphism.
\begin{enumerate}[1)]
\item If $\varphi$ is injective, then \begin{equation}\label{Equ:slope
inequality}\widehat{\mu}(\overline
E)\le\widehat{\mu}_{\max}(\overline
F)+h(\varphi).\end{equation}
\item If $\varphi$ is non-zero, then
\begin{equation}\label{Equ:ineqality de pentes eux}
\widehat{\mu}_{\min}(\overline
E)\le\widehat{\mu}_{\max}(\overline
E)+h(\varphi)
\end{equation}
where $\widehat{\mu}_{\min}(\overline E)$ is
the minimal value of slopes of all non-zero
Hermitian vector quotient bundles of
$\overline E$.
\end{enumerate}
\end{proposition}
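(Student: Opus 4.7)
The plan is to prove part (1) first, via a standard determinant argument, and then deduce part (2) from part (1) by factoring $\varphi$ through its image.

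For part (1), I would first reduce to the case where the rank of $F$ equals the rank of $E$. Let $F'_K := \varphi(E_K) \subset F_K$, and let $F'$ be the saturation $F'_K \cap F$, equipped with the Hermitian metrics induced from $\overline F$ at each archimedean place. Then $\overline{F'}$ is a Hermitian subbundle of $\overline F$ of rank $r := \rang E$, and $\widehat\mu(\overline{F'}) \le \widehat\mu_{\max}(\overline F)$. Since $\varphi$ now identifies $E_K$ with $F'_K$, it suffices to prove
\[
\widehat\mu(\overline E) \le \widehat\mu(\overline{F'}) + h(\varphi).
\]
By \eqref{Equ:degre of determiant equals to degre}, this is equivalent to the same inequality for the Arakelov degrees of the determinant line bundles $\det\overline E$ and $\det\overline{F'}$, connected by the nonzero $K$-linear isomorphism $\det\varphi : \det E_K \to \det F'_K$. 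I would pick a nonzero element $e \in \det E$, set $f := (\det\varphi)(e) \in \det F'_K$, and apply the formula \eqref{Equ:degre d'Arakelove of a line bundle} (which extends to nonzero generic sections): the inequality $\|f\|_v \le \|\det\varphi\|_v \cdot \|e\|_v$ valid at every place $v$ yields
\[
\widehat{\deg}_n(\det\overline E) - \widehat{\deg}_n(\det\overline{F'}) \le h(\det\varphi).
\]

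The main analytic obstacle is the comparison $h(\det\varphi) \le r\cdot h(\varphi)$. At each finite place $\mathfrak p$, expressing $\det\varphi$ as a homogeneous polynomial of degree $r$ in the matrix entries of $\varphi$ relative to $\mathcal O_{K_{\mathfrak p}}$-bases and using the ultrametric inequality gives $\|\det\varphi\|_{\mathfrak p} \le \|\varphi\|_{\mathfrak p}^r$. At each archimedean place $\sigma$, this is Hadamard's inequality: in orthonormal bases, $|\det\varphi_\sigma|$ is bounded by the product of the column norms, each of which is bounded by $\|\varphi_\sigma\|$. Summing over all places, dividing by $[K:\mathbb Q]$, and then by $r$ yields part (1).

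For part (2), I interpret the right-hand side as $\widehat\mu_{\max}(\overline F) + h(\varphi)$ (the statement as printed appears to carry a typo and otherwise would be trivial when $h(\varphi) \ge 0$). Let $\overline N$ be the saturated Hermitian subbundle of $\overline E$ with $N_K = \ker\varphi$, equipped with the restricted metrics, and let $\overline{E'} := \overline E/\overline N$ be the quotient Hermitian vector bundle, whose archimedean metrics are the quotient metrics. Then $\varphi$ factors through an injective $K$-linear map $\bar\varphi : E'_K \hookrightarrow F_K$. Because quotient norms dominate operator norms of factorizations, $\|\bar\varphi\|_v \le \|\varphi\|_v$ at each place, hence $h(\bar\varphi) \le h(\varphi)$. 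Applying part (1) to $\bar\varphi$ gives
\[
\widehat\mu(\overline{E'}) \le \widehat\mu_{\max}(\overline F) + h(\bar\varphi) \le \widehat\mu_{\max}(\overline F) + h(\varphi),
\]
and since $\overline{E'}$ is a nonzero Hermitian quotient of $\overline E$, by definition $\widehat\mu_{\min}(\overline E) \le \widehat\mu(\overline{E'})$, yielding the desired inequality.
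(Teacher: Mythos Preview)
The paper does not supply a proof of this proposition: it is stated with attribution to \cite{BostBour96} and used as a black box, so there is no in-paper argument to compare against. Your proof is correct and follows the standard route (pass to the saturated image, take determinants, bound $h(\det\varphi)\le r\,h(\varphi)$ via the ultrametric inequality and Hadamard, then factor through the image for part~(2)); you are also right that part~(2) as printed contains a typo and should read $\widehat{\mu}_{\max}(\overline F)$ on the right-hand side, consistently with how \eqref{Equ:ineqality de pentes eux} is invoked later in Lemma~\ref{Lem:lemma pour thm principal}.
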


For any non-zero Hermitian vector bundle
$\overline E$ on $\Spec\mathcal O_K$, let
$\mathrm{u}\widehat{\deg}_n(\overline E)$ be
the maximal degree of line subbundles of
$\overline E$. We recall a result of Bost and
K\"unnemann comparing the maximal degree and
the maximal slope of $\overline E$, which is
a variant of Minkowski's First Theorem.

\begin{proposition}[\cite{Bost_Kunnemann} (3.27)] Let $\overline E$ be a
non-zero Hermitian vector bundle on
$\Spec\mathcal O_K$. Then
\begin{equation}\label{Equ:Bost-Kunnemann}
\mathrm{u}\widehat{\deg}_n(\overline E)\le
\widehat{\mu}_{\max}(\overline
E)\le\mathrm{u}\widehat{\deg}_n(\overline
E)+\frac{1}{2}\log(\rang
E)+\frac{\log|\Delta_K|}{2[K:\mathbb
Q]},\end{equation} where $\Delta_K$ is the
discriminant of $K$.
\end{proposition}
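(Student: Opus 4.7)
The lower bound $\mathrm{u}\widehat{\deg}_n(\overline E)\le\widehat{\mu}_{\max}(\overline E)$ is immediate from the definitions: any Hermitian line subbundle $\overline L$ of $\overline E$ is itself a non-zero Hermitian subbundle, and $\widehat{\mu}(\overline L)=\widehat{\deg}_n(\overline L)$ since $\rang(L)=1$. Taking the supremum of $\widehat{\deg}_n(\overline L)$ over all such line subbundles yields the claim.

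For the upper bound, the plan is to reduce to Minkowski's first theorem in its arithmetic form. Let $\overline F=\overline E_{\des}$ be the destabilizing Hermitian subbundle, so that $\widehat{\mu}(\overline F)=\widehat{\mu}_{\max}(\overline E)$; if $\overline E$ is already semistable we simply take $\overline F=\overline E$. Since every Hermitian line subbundle of $\overline F$ is also a line subbundle of $\overline E$, one has $\mathrm{u}\widehat{\deg}_n(\overline F)\le\mathrm{u}\widehat{\deg}_n(\overline E)$. The problem thus reduces to producing, inside an arbitrary non-zero Hermitian vector bundle $\overline F$ on $\Spec\mathcal O_K$ of rank $r\le\rang E$, a Hermitian line subbundle $\overline L$ whose Arakelov degree is close to $\widehat{\mu}(\overline F)$.

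To do so, view $F$ as a lattice of $\mathbb Z$-rank $r[K:\mathbb Q]$ inside the real vector space $F_{\mathbb R}:=\bigoplus_{\sigma\in\Sigma_\infty}F\otimes_{\mathcal O_K,\sigma}\mathbb C$, where complex conjugate embeddings are identified as usual. The covolume of this lattice with respect to the product of the Hermitian structures is, up to a normalization depending only on $r[K:\mathbb Q]$, equal to $|\Delta_K|^{r/2}\exp(-[K:\mathbb Q]\,\widehat{\deg}_n(\overline F))$. Applying Minkowski's first theorem to a suitably chosen centrally symmetric convex body (a product of Hermitian balls of equal radius at each Archimedian place) furnishes a non-zero $s\in F$ satisfying
\[
-\frac{1}{[K:\mathbb Q]}\sum_{\sigma\in\Sigma_\infty}\log\|s\|_\sigma\;\ge\;\widehat{\mu}(\overline F)-\frac{1}{2}\log r-\frac{\log|\Delta_K|}{2[K:\mathbb Q]}.
\]
By formula \eqref{Equ:degre d'Arakelove of a line bundle}, the left-hand side is exactly $\widehat{\deg}_n(\overline L)$, where $\overline L$ denotes the saturation of $\mathcal O_Ks$ in $\overline F$ (the saturation step can only increase the finite-place contribution, hence the degree). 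Since $r=\rang F\le\rang E$, this yields the stated inequality.

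The substantive step is of course the last one: the main obstacle is to calibrate Minkowski's theorem so as to extract precisely the error term $\tfrac{1}{2}\log\rang E+\tfrac{\log|\Delta_K|}{2[K:\mathbb Q]}$, rather than a worse constant. This requires a careful choice of the convex body and a sharp estimate relating the covolume of the lattice $F\subset F_{\mathbb R}$ to the Arakelov degree $\widehat{\deg}_n(\overline F)$; such an estimate is essentially the content of the arithmetic Minkowski inequality in the form established in \cite{Bost_Kunnemann}.
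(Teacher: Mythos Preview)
The paper does not actually prove this proposition: it is quoted verbatim from \cite{Bost_Kunnemann} (3.27) and used as a black box, so there is no ``paper's own proof'' to compare against. Your sketch follows the standard route that underlies the cited reference: pass to the destabilizing subbundle $\overline F$ to replace $\widehat{\mu}_{\max}(\overline E)$ by a genuine slope $\widehat{\mu}(\overline F)$, and then apply Minkowski's First Theorem to the lattice $F\subset F_{\mathbb R}$ in order to produce a short vector generating a line subbundle of large degree.

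Two small remarks. First, your sentence ``the left-hand side is exactly $\widehat{\deg}_n(\overline L)$'' is slightly off: by \eqref{Equ:degre d'Arakelove of a line bundle} the degree also involves the finite places, and what you actually use is the inequality $-\frac{1}{[K:\mathbb Q]}\sum_{\mathfrak p}\log\|s\|_{\mathfrak p}\ge 0$ (valid because $s\in F$), so that $\widehat{\deg}_n(\overline L)$ is \emph{at least} the Archimedean sum. You essentially say this in the parenthetical remark, but the word ``exactly'' should be ``at least''. Second, you are right to flag the calibration of Minkowski as the only non-formal step: with a na\"ive choice of convex body one typically picks up a constant depending on $[K:\mathbb Q]$ or on $r[K:\mathbb Q]$ rather than on $r$ alone, and isolating precisely $\tfrac{1}{2}\log r+\tfrac{\log|\Delta_K|}{2[K:\mathbb Q]}$ is exactly what \cite{Bost_Kunnemann} does. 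Since you ultimately appeal to that reference for this point, your argument is a correct outline rather than an independent proof, which is also all the present paper claims.
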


\section{Reminder on invariant theory}

\hskip\parindent In this section we recall
some known results in classical invariant
theory. We fix $K$ to be a field of
characteristic $0$. If $V$ is a vector space
over $K$ and if $u\in\mathbb N$, then the
expression $V^{\otimes(-u)}$ denotes the
space $V^{\vee\otimes u}$.

Let $V$ be a finite dimensional non-zero
vector space over $K$. For any $u\in\mathbb N
$, we denote by $J_u:\End_{K}(V)^{\otimes
u}\rightarrow\End_K(V^{\otimes u})$ the
$K$-linear homomorphism (of vector spaces)
which sends the tensor product
$T_1\otimes\cdots\otimes T_u$ of $u$ elements
in $\End_K(V)$ to their tensor product as an
endomorphism of $V^{\otimes u}$. The mapping
$J_u$ is actually a homomorphism of
$K$-algebras. Furthermore, as a homomorphism
of vector spaces, $J_u$ can be written as the
composition of the following natural
isomorphisms:
\[\xymatrix{\End_K(V)^{\otimes u}\ar[r]&(V^\vee\otimes
V)^{\otimes u} \ar[r]&(V^\vee)^{\otimes
u}\otimes V^{\otimes u}\ar[r]&(V^{\otimes
u})^\vee\otimes V^{\otimes
u}\ar[r]&\End_K(V^{\otimes u}),}\] so is
itself an isomorphism. Moreover, there exists
an action of the symmetric group
$\mathfrak{S}_{u}$ on $V^{\otimes u}$ by
permuting the factors. This representation of
$\mathfrak S_u$ defines a homomorphism from
the group algebra $K[\mathfrak S_u]$ to
$\End_K(V^{\otimes u})$. The elements of
$\mathfrak S_u$ act by conjugation on
$\End_K(V^{\otimes u})$. If we identify
$\End_K(V^{\otimes u})$ with
$\End_K(V)^{\otimes u}$ by the isomorphism
$J_u$, then the corresponding $\mathfrak
S_u$-action is just the permutation of
factors in tensor product. Finally the group
$\mathrm{GL}_K(V)$ acts diagonally on
$V^{\otimes u}$.

When $u=0$, $J_0$ reduces to the identical
homomorphism $\Id:K\rightarrow K$, and
$\mathfrak S_0$ reduces to the group of one
element. The ``diagonal'' action of
$\mathrm{GL}_K(V)$ on $V^{\otimes 0}\cong K$
is trivial.

We recall below the ``first principal
theorem'' of classical invariant theory (cf.
\cite{Weyl} Chapter III, see also
\cite{Atiyah_Bott_Patodi} Appendix 1 for a
proof).
\begin{theorem}\label{Thm:first pricipal theorem}
Let $V$ be a finite dimensional non-zero
vector space over $K$. Let $u\in\mathbb N$
and $v\in\mathbb Z$. If $T$ is a non-zero
element in $V^{\vee\otimes u}\otimes
V^{\otimes v}$, which is invariant by the
action of $\mathrm{GL}_K(V)$, then $u=v$, and
$T$ is a linear combination of permutations
in $\mathfrak S_u$ acting on $V$ (here we
identify $V^{\vee\otimes u}\otimes V^{\otimes
u}$ with $\End_K(V^{\otimes u})$).
\end{theorem}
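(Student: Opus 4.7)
My plan is to split the proof into the dimension constraint $u=v$ and a Schur--Weyl duality argument, then deduce the structural claim by a double centralizer argument.

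First, I would handle the case $u\neq v$ by restricting the $\mathrm{GL}_K(V)$-action to the central one-parameter subgroup $\{\lambda\cdot\Id_V\mid\lambda\in K^\times\}$. This central element acts on $V^{\vee\otimes u}\otimes V^{\otimes v}$ (interpreting $V^{\otimes v}=V^{\vee\otimes|v|}$ when $v<0$) by multiplication by $\lambda^{v-u}$ (respectively $\lambda^{-u-|v|}$). Since $K$ has characteristic $0$, hence is infinite, invariance forces the exponent to vanish identically; this gives $u=v$ (and in the case $v<0$, only the trivial invariant $T=0$ exists, so the statement holds vacuously). So from now on I assume $u=v$ and identify the ambient space with $\End_K(V^{\otimes u})$; under this identification, the $\mathrm{GL}_K(V)$-invariance of $T$ becomes the commutation condition $[T,g^{\otimes u}]=0$ for all $g\in\mathrm{GL}_K(V)$.

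The heart of the argument is then to show that the centralizer of the diagonal $\mathrm{GL}_K(V)$-action in $\End_K(V^{\otimes u})$ is exactly the image of $K[\mathfrak S_u]$. Denote by $A\subseteq\End_K(V^{\otimes u})$ the image of $K[\mathrm{GL}_K(V)]$ and by $B\subseteq\End_K(V^{\otimes u})$ the image of $K[\mathfrak S_u]$; under the isomorphism $J_u$, $B$ becomes the subspace $\mathrm{Sym}^u(\End_K(V))$ of $\mathfrak S_u$-invariants in $\End_K(V)^{\otimes u}$. I would first prove that $B$ is the centralizer of $A$ in $\End_K(V^{\otimes u})$, equivalently that $A=\mathrm{Sym}^u(\End_K(V))$ under $J_u$. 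The inclusion $A\subseteq\mathrm{Sym}^u(\End_K(V))$ is clear since each $g^{\otimes u}$ is a symmetric tensor. Conversely, by the polarization identity (valid because $\mathrm{char}\,K=0$), $\mathrm{Sym}^u(\End_K(V))$ is linearly spanned by the pure powers $\{T^{\otimes u}\mid T\in\End_K(V)\}$; and since $\mathrm{GL}_K(V)$ is a Zariski-dense open subset of $\End_K(V)$, the linear span of $\{g^{\otimes u}\mid g\in\mathrm{GL}_K(V)\}$ is a Zariski-closed subspace containing a dense subset of the image of the morphism $T\mapsto T^{\otimes u}$, hence contains every $T^{\otimes u}$.

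Finally I would invoke the Jacobson double centralizer theorem. Both $A$ and $B$ are semisimple subalgebras of $\End_K(V^{\otimes u})$: $A$ because $V^{\otimes u}$ is a semisimple $\mathrm{GL}_K(V)$-module (the group is reductive in characteristic $0$), and $B$ because $K[\mathfrak S_u]$ is semisimple by Maschke. Having shown $B'=A$, the double centralizer theorem applied to the semisimple algebra $B$ gives $B=B''=A'$, which is exactly the desired description: every $\mathrm{GL}_K(V)$-invariant $T\in\End_K(V^{\otimes u})$ is a linear combination of permutations in $\mathfrak S_u$. The main obstacle I anticipate is the transition from $\mathrm{GL}_K(V)$ to all of $\End_K(V)$ (handled by Zariski density) combined with the polarization step; once those are in place, the double centralizer theorem does the rest.
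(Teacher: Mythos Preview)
The paper does not prove this theorem at all; it merely cites Weyl's book and Appendix~1 of Atiyah--Bott--Patodi. Your outline is essentially the standard Schur--Weyl argument found in those references, so the overall strategy is sound.

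There is, however, a slip in the middle paragraph that you should fix. You write that under $J_u$ the algebra $B$ (the image of $K[\mathfrak S_u]$) ``becomes the subspace $\mathrm{Sym}^u(\End_K(V))$''. This is false: for instance when $\dim V=2$ and $u=2$, $B$ is $2$-dimensional while $\mathrm{Sym}^2(\End_K(V))$ is $10$-dimensional. What is true---and what you actually need---is that the \emph{centralizer} $B'$ corresponds to $\mathrm{Sym}^u(\End_K(V))$ under $J_u$. This follows directly from the paper's own remark that conjugation by $\mathfrak S_u$ on $\End_K(V^{\otimes u})$ corresponds via $J_u$ to permutation of tensor factors: hence $T$ commutes with every $\sigma\in\mathfrak S_u$ if and only if $J_u^{-1}(T)$ is a symmetric tensor. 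Relatedly, your sentence ``I would first prove that $B$ is the centralizer of $A$'' should read ``that $A$ is the centralizer of $B$'' (i.e.\ $A=B'$), since that is what your polarization/Zariski-density computation actually establishes; the conclusion $B=A'$ only comes afterwards via the double centralizer theorem, exactly as your final paragraph says. Once you swap $B$ for $B'$ in that one spot and reorder those two clauses, the argument is complete.
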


We now present a generalization of Theorem
\ref{Thm:first pricipal theorem} to the case
of several linear spaces. In the rest of this
section, we fix a family $(V_i)_{1\le i\le
n}$ of finite dimensional non-zero vector
space over $K$. For any mapping
$\alpha:\{1,\cdots,n\}\rightarrow\mathbb Z$,
we shall use the notation
\begin{equation}\label{Equ:Valpha}V^\alpha:=V_1^{\otimes\alpha(1)}\otimes
\cdots\otimes
V_n^{\otimes\alpha(n)}\end{equation} to
simplify the writing.
 Denote
by $G$ the algebraic group
$\mathbb{GL}_K(V_1)\times_K\cdots\times_K\mathbb{GL}_K(V_n)$.
Then $G(K)$ is the group
$\mathrm{GL}_K(V_1)\times\cdots\times\mathrm{GL}_K(V_n)$.
For any mapping
$\alpha:\{1,\cdots,n\}\rightarrow\mathbb N$
with natural integer values, we denote by
$\mathfrak{S}_{\alpha}$ the product
$\mathfrak{S}_{\alpha(1)}\times
\cdots\times\mathfrak{S}_{\alpha(n)}$ of
symmetric groups. We have a natural
isomorphism of $K$-algebras from
$\End_K(V^\alpha)$ to $\End_K(V_1)^{\otimes
\alpha(1)}\otimes_K\cdots\otimes_K
\End_K(V_n)^{\otimes \alpha(n)}$. The group
$G(K)$ acts naturally on $V^\alpha$ and the
group $\mathfrak S_{\alpha}$ acts on
$V^\alpha$ by permutating tensor factors. By
using induction on $n$, Theorem
\ref{Thm:first pricipal theorem} implies the
following corollary:

\begin{corollary}\label{Thm:premier theoreme principal de la theorie d'invaraint}
With the notation above, if
$\alpha:\{1,\cdots,n\}\rightarrow \mathbb N$
and $\beta:\{1,\cdots,n\}\rightarrow\mathbb
Z$ are two mappings and if $T$ is a non-zero
element in $(V^\alpha)^\vee\otimes V^\beta $
which is invariant by the action of $G(K)$,
then $\alpha=\beta$, and $T$ is a linear
combination of elements in $\mathfrak
S_{\alpha}$ acting on $V^\alpha$.
\end{corollary}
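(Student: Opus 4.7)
The plan is to prove the statement by induction on $n$. The base case $n=1$ is precisely Theorem \ref{Thm:first pricipal theorem}, so all the work lies in the inductive step.

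For the step from $n-1$ to $n$, I would single out the last factor. After reordering tensor factors, rewrite $(V^\alpha)^\vee\otimes V^\beta$ as $W'\otimes W_n$ with $W':=\bigotimes_{i<n}\bigl(V_i^{\vee\otimes\alpha(i)}\otimes V_i^{\otimes\beta(i)}\bigr)$ and $W_n:=V_n^{\vee\otimes\alpha(n)}\otimes V_n^{\otimes\beta(n)}$. Because $G(K)$ is a direct product, its action splits cleanly: $\mathrm{GL}_K(V_n)$ acts only on the $W_n$-factor and $G'(K):=\prod_{i<n}\mathrm{GL}_K(V_i)$ acts only on the $W'$-factor. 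Thus $G(K)$-invariance of $T$ is the conjunction of (a) $\mathrm{GL}_K(V_n)$-invariance and (b) $G'(K)$-invariance.

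For (a), I would expand $T$ in a basis of $W'$ with coefficients in $W_n$; invariance forces each coefficient to be $\mathrm{GL}_K(V_n)$-invariant, and since $T\neq 0$ at least one is non-zero. Applying Theorem \ref{Thm:first pricipal theorem} to $V_n$ then yields $\alpha(n)=\beta(n)$ and puts $T$ in $W'\otimes E_n$, where $E_n\subset W_n$ is the subspace spanned by the elements of $\mathfrak{S}_{\alpha(n)}$ acting on $V_n^{\otimes\alpha(n)}$. For (b), I would pick a basis $(f_l)$ of $E_n$ consisting of such permutations and re-expand $T=\sum_l t'_l\otimes f_l$ with coefficients $t'_l\in W'$; since $G'(K)$ fixes each $f_l$, $G'(K)$-invariance of $T$ is equivalent to $G'(K)$-invariance of each $t'_l$. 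The inductive hypothesis applied to the non-zero $t'_l$ then delivers $\alpha(i)=\beta(i)$ for $i<n$ and expresses each $t'_l$ as a combination of elements of $\mathfrak{S}_{\alpha(1)}\times\cdots\times\mathfrak{S}_{\alpha(n-1)}$ acting on $V_1^{\otimes\alpha(1)}\otimes\cdots\otimes V_{n-1}^{\otimes\alpha(n-1)}$. Reassembling gives the sought expansion of $T$ in terms of $\mathfrak{S}_\alpha$ acting on $V^\alpha$.

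The main nuisance I anticipate is the bookkeeping around the rearrangement of tensor factors: one has to check that the identification of $(V^\alpha)^\vee\otimes V^\beta$ with the factorwise model $\bigotimes_{i=1}^n\bigl(V_i^{\vee\otimes\alpha(i)}\otimes V_i^{\otimes\beta(i)}\bigr)$ intertwines both the $G(K)$-action and the $\mathfrak{S}_\alpha$-action, so that the factorwise product action of $\mathfrak{S}_{\alpha(1)}\times\cdots\times\mathfrak{S}_{\alpha(n)}$ really matches $\mathfrak{S}_\alpha$ acting on $V^\alpha$. Nothing conceptually deep, but easy to get tangled in.
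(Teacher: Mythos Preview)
Your proposal is correct and matches the paper's approach exactly: the paper simply states that the corollary follows from Theorem~\ref{Thm:first pricipal theorem} by induction on $n$, giving no further details, and your argument is a faithful unpacking of that induction. The only point worth noting is that your basis $(f_l)$ of $E_n$ need not consist of all of $\mathfrak{S}_{\alpha(n)}$ but only of a linearly independent subset extracted from it, which you implicitly handle correctly.
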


Let $\mathcal A$ be a finite family of
mappings from $\{1,\cdots,n\}$ to $\mathbb N$
and $(b_i)_{1\le i\le n}$ be a family of
integers. We denote by $W$ the vector space
$\bigoplus_{\alpha\in\mathcal A} V^\alpha $.
Note that the group $G(K)$ acts naturally on
$W$. Let $L$ be the $G(K)$-module $(\det
V_1)^{\otimes b_1}\otimes\cdots\otimes(\det
V_n)^{\otimes b_n}$. For any integer $D\ge 1$
and any element
$\underline{\alpha}=(\alpha_j)_{1\le j\le
D}\in\mathcal A^D$, let
\[\pr_{\underline{\alpha}}:W^{\otimes D}\longrightarrow
V^{\alpha_1}\otimes\cdots\otimes
V^{\alpha_D}\] be the canonical projection.
For any integer $i\in\{1,\cdots,n\}$, let
$r_i$ be the rank of $V_i$ over $K$. Finally
let $\pi:\mathbb P(W^\vee)\rightarrow\Spec K$
be the canonical morphism.

\begin{theorem}\label{Thm:semistability et determinant}
With the notation above, if $m$ is a strictly
positive integer and if $R$ is a vector
subspace of rank $1$ of $W$ (considered as a
rational point of $\mathbb P(W^\vee)$) which
is semistable for the action of ${G}$
relatively to $\mathcal
O_{W^\vee}(m)\otimes\pi^*L$, then there
exists an integer $D\ge 1$ and a family
$\underline{\alpha}=(\alpha_j)_{1\le j\le
mD}$ of elements in $\mathcal A$ such that,
by noting $A=\alpha_1+\cdots+\alpha_{mD}$, we
have $A(i)=Db_ir_i$ and hence $b_i\ge 0$ for
any $i$.

Furthermore, there exists an element
$\sigma\in\mathfrak S_{A}$ such that the
composition of homomorphisms
\[\xymatrix{\relax R^{\otimes mD}\otimes
L^{\vee\otimes D} \ar[r]&W^{\otimes
mD}\otimes L^{\vee\otimes D
}\ar[rr]^-{\pr_{\underline{\alpha}}\otimes\Id}&&
V^A\otimes L^{\vee\otimes
D}\ar[d]_{\sigma\otimes\Id}\\
&&&V^A\otimes L^{\vee\otimes
D}\ar[d]_{{\det_{\hskip -1 pt\raisebox{-1.2
pt }{$\scriptscriptstyle V_1$}}}^{\hskip -8.5
pt\otimes \hskip -0.5pt
Db_1}\otimes\cdots\otimes{\det_{\hskip -1
pt\raisebox{-1.2 pt }{$\scriptscriptstyle
V_n$}}}^{\hskip -9 pt\otimes\hskip -0.5pt Db_n}\otimes\Id}\\
&&&L^{\otimes D}\otimes L^{\vee\otimes D}\cong K }\]
does not vanish, where the first arrow is induced by
the canonical inclusion of $R^{\otimes nD}$ in
$W^{\otimes nD}$.
\end{theorem}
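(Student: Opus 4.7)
The plan is to unpack the semistability hypothesis into a concrete $G$-invariant tensor and then apply the first principal theorem of classical invariant theory (Corollary \ref{Thm:premier theoreme principal de la theorie d'invaraint}) to its isotypic components. By the reformulation of semistability given in Section~2, there exist an integer $D\ge 1$ and a non-zero $G$-invariant section $s\in H^0(\mathbb P(W^\vee),\mathcal O_{W^\vee}(mD)\otimes\pi^*L^{\otimes D})$ which does not vanish at the point $R$. Using the identification $H^0(\mathbb P(W^\vee),\mathcal O_{W^\vee}(mD))=S^{mD}(W^\vee)\subset(W^\vee)^{\otimes mD}$, I view $s$ as a $G$-equivariant linear map $s\colon W^{\otimes mD}\to L^{\otimes D}$; its restriction to $R^{\otimes mD}$ is non-zero and, tensored with $L^{\vee\otimes D}$, is precisely the map $R^{\otimes mD}\otimes L^{\vee\otimes D}\to K$ whose non-vanishing the theorem demands.

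The direct sum $W=\bigoplus_{\alpha\in\mathcal A}V^\alpha$ induces a $G$-stable decomposition
\[W^{\otimes mD}=\bigoplus_{\underline{\alpha}\in\mathcal A^{mD}}V^{A(\underline{\alpha})},\qquad A(\underline{\alpha})=\alpha_1+\cdots+\alpha_{mD},\]
and accordingly $s=\sum_{\underline{\alpha}}s_{\underline{\alpha}}$ with each $s_{\underline{\alpha}}\colon V^{A(\underline{\alpha})}\to L^{\otimes D}$ $G$-equivariant. Since $s|_{R^{\otimes mD}}\ne 0$, at least one $s_{\underline{\alpha}}\circ\pr_{\underline{\alpha}}$ must be non-zero on $R^{\otimes mD}$; I fix such an $\underline{\alpha}=(\alpha_j)_{j=1}^{mD}$ and set $A=A(\underline{\alpha})$.

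For this $\underline{\alpha}$, the map $s_{\underline{\alpha}}$ defines a non-zero $G$-invariant element of $(V^A)^\vee\otimes L^{\otimes D}$. Since $L^{\otimes D}=\bigotimes_i(\det V_i)^{\otimes Db_i}$ embeds canonically into $V^\gamma$ with $\gamma(i)=Db_ir_i$ (using the convention $V^{-k}=V^{\vee\otimes k}$ for $k<0$), Corollary \ref{Thm:premier theoreme principal de la theorie d'invaraint} applied inside $(V^A)^\vee\otimes V^\gamma$ forces $A=\gamma$, hence $A(i)=Db_ir_i$ for every $i$; since $A(i)\ge 0$ this yields $b_i\ge 0$. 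With $L^{\otimes D}$ now realised as a $G$-submodule of $V^A$, composing $s_{\underline{\alpha}}$ with the inclusion $L^{\otimes D}\hookrightarrow V^A$ produces a $G$-equivariant endomorphism of $V^A$, which by a second application of Corollary \ref{Thm:premier theoreme principal de la theorie d'invaraint} can be written as $\sum_{\sigma\in\mathfrak S_A}c_\sigma\sigma$ with $c_\sigma\in K$. Composing back with the $G$-equivariant retraction $\det^{\otimes Db_1}_{V_1}\otimes\cdots\otimes\det^{\otimes Db_n}_{V_n}\colon V^A\to L^{\otimes D}$, which restricts to multiplication by a non-zero scalar on $L^{\otimes D}$ (as $K$ has characteristic $0$), gives after absorbing this scalar
\[s_{\underline{\alpha}}=\sum_{\sigma\in\mathfrak S_A}c'_\sigma\bigl(\det^{\otimes Db_1}_{V_1}\otimes\cdots\otimes\det^{\otimes Db_n}_{V_n}\bigr)\circ\sigma.\]

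To conclude, since $s_{\underline{\alpha}}\circ\pr_{\underline{\alpha}}$ does not vanish on $R^{\otimes mD}$, at least one summand in the formula above must survive the restriction, and any such index $\sigma\in\mathfrak S_A$ furnishes the non-vanishing composition required by the theorem. The principal obstacle is the first invocation of Corollary \ref{Thm:premier theoreme principal de la theorie d'invaraint}: one has to realise the determinant twist $L^{\otimes D}$ inside an appropriate $V^\gamma$ with a priori possibly negative exponents, and then extract from the invariant-theoretic rigidity both the numerical equality $A(i)=Db_ir_i$ and the positivity $b_i\ge 0$. The remaining steps are formal bookkeeping around the $G$-isotypic decomposition of $W^{\otimes mD}$ and the permutation-representation description of $G$-equivariant endomorphisms of $V^A$.
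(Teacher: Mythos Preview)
Your proof is correct and follows essentially the same route as the paper's: extract a $G$-invariant section from the semistability hypothesis, pass to the tensor algebra, pick out a component indexed by some $\underline{\alpha}\in\mathcal A^{mD}$, embed the determinant twist into the appropriate $V^\gamma$, and invoke Corollary~\ref{Thm:premier theoreme principal de la theorie d'invaraint}. The only cosmetic differences are that you use the inclusions $S^{mD}(W^\vee)\hookrightarrow (W^\vee)^{\otimes mD}$ and $L^{\otimes D}\hookrightarrow V^\gamma$ where the paper instead lifts through the dual projections, and you invoke the corollary twice (once for $A=\gamma$, once for the permutation expansion) where the paper reads off both conclusions from a single application; neither change affects the argument.
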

\begin{proof}
Since $R$ is semistable for the action of
${G}$ relatively to $\mathcal
O_{W^\vee}(m)\otimes\pi^*L$, there exists an
integer $D\ge 1$ and an element $s\in
S^{mD}(W^\vee)\otimes L^{\otimes D}$ which is
invariant by the action of $G(K)$ such that
the composition of homomorphisms
\[\xymatrix{\relax R^{\otimes mD}\otimes L^{\vee\otimes D}
\ar[r]& S^{mD}(W^\vee)^\vee\otimes
L^{\vee\otimes D}\ar[r]^-s&K}\] does not
vanish, the first arrow being the canonical
inclusion.

As $K$ is of characteristic $0$,
$S^{md}(W^\vee)$ is a direct factor as a
$\mathrm{GL}(W)$-module of $W^{\vee\otimes
mD}$. Hence $S^{mD}(W^\vee)\otimes L^{\otimes
D}$ is a direct factor as a $G(K)$-module of
$W^{\vee\otimes mD}\otimes L^{\otimes D}$. So
we can choose $s'\in W^{\vee\otimes
mD}\otimes L^{\otimes D}$ invariant by the
action of $G(K)$ such that the class of $s'$
in $S^{mD}(W^\vee)\otimes L^{\otimes D}$
coincides with $s$. There then exists
$\underline{\alpha}=(\alpha_j)_{1\le j\le mD
}\in\mathcal A^D$ such that the composition
\[\xymatrix{\relax R^{\otimes mD}\otimes L^{\vee\otimes D}
\ar[r]&W^{\otimes mD}\otimes L^{\vee\otimes
D}\ar[rr]^-{\pr_{\underline{\alpha}}\otimes\Id}&&
V^A\otimes L^{\vee\otimes
D}\ar[r]^-{s_{\underline{\alpha}}'}&K}\] is
non-zero, where
$A=\alpha_1+\cdots+\alpha_{mD}$ and
$s_{\underline{\alpha}}'$ is the component of
index $\underline{\alpha}$ of $s'$. Let
$B:\{1,\cdots,n\}\rightarrow\mathbb Z$ be the
mapping which sends $i$ to $Db_ir_i$. Note
that for any $i$, $\Lambda^{r_i}V_i=\det V_i$
is naturally a direct factor of $V_i^{\otimes
r_i}$. We can therefore choose a preimage
$s_{\underline{\alpha}}''$ of
$s_{\underline{\alpha}}'$ in
$(V^A)^{\vee}\otimes {V^B}$ which is
invariant by $G(K)$. By Corollary
\ref{Thm:premier theoreme principal de la
theorie d'invaraint}, $A=B$ and
$s_{\underline{\alpha}}''$ is a linear
combination of permutations acting on $V$.
Therefore the theorem is proved.
\end{proof}

\section{Upper bound for the degree of a Hermitian line subbundle
with hypothesis of semistability
}\label{Sec:Upper bound with hypothesis of
semi-stability }

\hskip\parindent Let $K$ be a number field
and $\mathcal O_K$ be its integer ring.
Consider a family $(\overline E_i)_{1\le i\le
n}$ of non-zero Hermitian vector bundles on
$\Spec\mathcal O_K$. Let $\mathcal A$ be a
non-empty and finite family of
non-identically zero mappings from
$\{1,\cdots,n\}$ to $\mathbb N$. We define a
new Hermitian vector bundle over
$\Spec\mathcal O_K$ as follows:
\[\overline E:=\bigoplus_{{\alpha}\in\mathcal A}\overline E_1^{\otimes\alpha(1)}
\otimes\cdots\otimes\overline
E_n^{\otimes\alpha(n)}.\]

In this section, we shall use the ideas in
\cite{Bost94} to obtain an upper bound for
the Arakelov degree of a Hermitian line
subbundle $\overline M$ of $\overline E$
under hypothesis of semistability (in the
sense of geometric invariant theory) for
$M_K$. This upper bound is crucial because,
as we shall see later, the general case can
be reduced to this special one through an
argument of Ramanan and Ramanathan
\cite{Ramanan_Ramanathan}.

For any integer $i$ such that $1\le i\le n$, let $r_i$ be
the rank of $E_i$ and let $V_i$ be the vector
space $E_{i,K}$. Let $W=E_{K}$ and
$\pi:\mathbb P(W^\vee)\rightarrow\Spec K$ be
the canonical morphism. By definition
$W=\bigoplus_{\underline{\alpha}\in\mathcal
A}V^\alpha$, where $V^\alpha$ is defined in
\eqref{Equ:Valpha}. We denote by ${G}$ the
algebraic group
$\mathbb{GL}_K(V_1)\times\cdots\times\mathbb{GL}_K(V_n)$
which acts naturally on $\mathbb P(W^\vee)$.
Let $(b_i)_{1\le i\le n}$ be a family of
strictly positive integers such that $r_i$
divides $b_i$. Finally let \[\overline
L=(\Lambda^{r_1}\overline E_1)^{\otimes
b_1/r_1}\otimes\cdots(\Lambda^{r_n}\overline
E_n)^{\otimes b_n/r_n}.\]

\begin{lemma}\label{Lem:hauteur de det}
Let $H$ be a Hermitian space of dimension
$d>0$. Then the norm of the homomorphism
$\det:H^{\otimes d}\rightarrow\Lambda^dH$
equals $\sqrt{d!}$.
\end{lemma}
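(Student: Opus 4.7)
The plan is to compute the adjoint $\det^*:\Lambda^d H\to H^{\otimes d}$ explicitly: since $\Lambda^d H$ is one-dimensional, the operator norm of $\det^*$ (which equals that of $\det$) is simply the length of the image of any unit generator. First I would fix an orthonormal basis $(e_1,\ldots,e_d)$ of $H$; then the tensors $e_{i_1}\otimes\cdots\otimes e_{i_d}$ with $1\leq i_j\leq d$ form an orthonormal basis of $H^{\otimes d}$, and $e_1\wedge\cdots\wedge e_d$ is a unit vector spanning $\Lambda^d H$ under the standard Arakelov normalization $\|v_1\wedge\cdots\wedge v_d\|_{\Lambda^d H}^2=\det(\langle v_i,v_j\rangle)$ (the convention consistent with $\widehat{\deg}_n(\overline E)=\widehat{\deg}_n(\Lambda^r\overline E)$).

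Next I would read off $\det^*(e_1\wedge\cdots\wedge e_d)$ from the defining adjoint relation
\[\bigl\langle\det^*(e_1\wedge\cdots\wedge e_d),\,e_{i_1}\otimes\cdots\otimes e_{i_d}\bigr\rangle=\bigl\langle e_1\wedge\cdots\wedge e_d,\,e_{i_1}\wedge\cdots\wedge e_{i_d}\bigr\rangle,\]
noting that the right-hand side vanishes unless the indices $i_j$ are pairwise distinct, and equals $\mathrm{sgn}(\sigma)$ when $(i_1,\ldots,i_d)=(\sigma(1),\ldots,\sigma(d))$ for some $\sigma\in\mathfrak{S}_d$. This yields
\[\det^*(e_1\wedge\cdots\wedge e_d)=\sum_{\sigma\in\mathfrak{S}_d}\mathrm{sgn}(\sigma)\,e_{\sigma(1)}\otimes\cdots\otimes e_{\sigma(d)},\]
an orthogonal sum of $d!$ unit vectors in $H^{\otimes d}$, whose norm is therefore $\sqrt{d!}$. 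Since $e_1\wedge\cdots\wedge e_d$ is a unit generator of the one-dimensional space $\Lambda^d H$, this quantity is exactly $\|\det^*\|=\|\det\|$, which is the claim.

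There is no substantial obstacle here; the only point requiring care is the normalization convention on $\Lambda^d H$, since under the alternative convention viewing $\Lambda^d H$ as the alternating subspace of $H^{\otimes d}$ equipped with the restricted metric, the generator $e_1\wedge\cdots\wedge e_d$ would have length $\sqrt{d!}$ and the constant would become $1$ instead. A slightly more elementary variant would expand $v=\sum c_{i_1\cdots i_d}\,e_{i_1}\otimes\cdots\otimes e_{i_d}$, observe that $\det(v)=\bigl(\sum_\sigma\mathrm{sgn}(\sigma)\,c_{\sigma(1)\cdots\sigma(d)}\bigr)e_1\wedge\cdots\wedge e_d$, and apply Cauchy--Schwarz to the sum over $\sigma$ to obtain $\|\det(v)\|\leq\sqrt{d!}\,\|v\|$ with equality achieved precisely on the alternating line, leading to the same conclusion.
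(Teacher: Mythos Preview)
Your argument is correct and essentially the same as the paper's: both identify $\det$ (the paper as an element of $H^{\vee\otimes d}$ after trivializing $\Lambda^d H$ by the unit vector $e_1\wedge\cdots\wedge e_d$, you via its adjoint evaluated on that unit vector) with the alternating sum $\sum_{\sigma\in\mathfrak S_d}\mathrm{sgn}(\sigma)\,e_{\sigma(1)}\otimes\cdots\otimes e_{\sigma(d)}$, and then observe this is a sum of $d!$ pairwise orthogonal unit vectors. Your remark on the normalization convention is apt and is precisely the choice the paper makes implicitly.
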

\begin{proof} Let $(e_i)_{1\le i\le d}$ be an
orthonormal basis of $H$ and let
$(e_i^\vee)_{1\le i\le d}$ be its dual basis
in $H^\vee$. If we identifies $\Lambda^dH$
with $\mathbb C$ via the basis
$e_1\wedge\cdots\wedge e_d$, then the
homomorphism $\det$, viewed as an element in
$H^{\vee\otimes d}$, can be written as
\[\sum_{\sigma\in\mathfrak S_d}\mathrm{sign}(\sigma)
e_{\sigma(1)}\otimes\cdots\otimes
e_{\sigma(d)},
\]
which is the sum of $d!$ orthogonal vectors
of norm $1$ in $H^{\vee\otimes d}$. So its
norm is $\sqrt{d!}$.
\end{proof}

\begin{theorem}\label{Thm:majoration de degre d'Arakelov avec
l'hypo de semistability} With the notation
above, if $m\ge 1$ is an integer and if
$\overline M$ is a Hermitian line subbundle
of $\overline E$ such that $M_K$ is
semistable for the action of ${G}$ relatively
to $\mathcal O_{W^\vee}(m)\otimes\pi^*L_K$,
then
\begin{equation*}\widehat{\deg}(\overline
M)\le\frac{1}{m}\widehat{\deg}(\overline L
)+\frac{1}{2m}\sum_{i=1}^rb_i\log(\rang E_i
)=\sum_{i=1}^n\frac{b_i}{m}
\Big(\widehat{\mu}(\overline E_i)+\frac{1}{2}\log(\rang
E_i)\Big).\end{equation*}
\end{theorem}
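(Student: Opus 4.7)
The plan is to apply Theorem~\ref{Thm:semistability et determinant} with its integers $b_i$ specialised to $b_i/r_i$, so that its $G$-module $L$ coincides with the generic fibre $L_K$ here (the divisibility hypothesis $r_i\mid b_i$ is exactly what is needed for this substitution). The semistability of $M_K$ then furnishes an integer $D\ge 1$, a family $\underline{\alpha}=(\alpha_j)_{1\le j\le mD}\in\mathcal{A}^{mD}$ satisfying $A(i):=\sum_j\alpha_j(i)=Db_i$, and a permutation $\sigma\in\mathfrak{S}_A$ for which the displayed composition is non-zero. Since $M_K^{\otimes mD}$ and $L_K^{\otimes D}$ are both one-dimensional, this non-vanishing is equivalent to the existence of an injective $K$-linear homomorphism $\varphi\colon M_K^{\otimes mD}\to L_K^{\otimes D}$.

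Next I apply the slope inequality~\eqref{Equ:slope inequality} to $\varphi$. Both sides are Hermitian line bundles, so $\widehat{\mu}(\overline M^{\otimes mD})=mD\,\widehat{\deg}(\overline M)$ and $\widehat{\mu}_{\max}(\overline L^{\otimes D})=D\,\widehat{\deg}(\overline L)$, which gives
\begin{equation*}
mD\,\widehat{\deg}(\overline M)\;\le\;D\,\widehat{\deg}(\overline L)+h(\varphi).
\end{equation*}
Everything therefore reduces to a place-by-place estimate of $h(\varphi)$.

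I would decompose $\varphi$ into the four factors supplied by Theorem~\ref{Thm:semistability et determinant}: the subbundle inclusion $\overline M\hookrightarrow\overline E$, the projection $\pr_{\underline{\alpha}}$ onto the orthogonal direct summand $V^{\alpha_1}\otimes\cdots\otimes V^{\alpha_{mD}}$ of $W$, the permutation $\sigma$, and the iterated determinant morphism $\bigotimes_i\det_{V_i}^{\otimes Db_i/r_i}\colon V^A\to L_K^{\otimes D}$. At each finite place $\mathfrak p$ every factor is induced by a genuine $\mathcal{O}_K$-linear morphism, so $h_\mathfrak{p}(\varphi)\le 0$. At each Archimedean place, the inclusion, the orthogonal projection, and the unitary permutation are contractions, so the only positive contribution comes from the determinant. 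By Lemma~\ref{Lem:hauteur de det}, $\det_{V_i}\colon V_{i,\mathbb C}^{\otimes r_i}\to\det V_{i,\mathbb C}$ has norm $\sqrt{r_i!}$, so its $(Db_i/r_i)$-fold tensor power has norm $(r_i!)^{Db_i/(2r_i)}$; combined with the elementary inequality $\log(r_i!)\le r_i\log r_i$, this bounds the Archimedean log-norm of $\varphi$ by $\sum_i(Db_i/2)\log r_i$. Averaging over the $[K:\mathbb Q]$ Archimedean embeddings (each giving the same bound) and adding the non-positive finite contributions yields $h(\varphi)\le\sum_i(Db_i/2)\log r_i$. Inserting this into the slope inequality and dividing by $mD$ gives the first form of the claimed bound, and the final equality follows from $\widehat{\deg}(\overline L)=\sum_i(b_i/r_i)\widehat{\deg}(\Lambda^{r_i}\overline E_i)=\sum_i b_i\,\widehat{\mu}(\overline E_i)$. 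The only genuinely delicate step is the Archimedean norm of the iterated determinant, which is precisely what Lemma~\ref{Lem:hauteur de det} settles; the rest of the argument is a formal combination of the slope inequality with the invariant-theoretic packaging of Theorem~\ref{Thm:semistability et determinant}.
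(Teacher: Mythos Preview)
Your proof is correct and follows essentially the same approach as the paper: invoke Theorem~\ref{Thm:semistability et determinant} (with its $b_i$ replaced by $b_i/r_i$), apply the slope inequality~\eqref{Equ:slope inequality} to the resulting non-zero map $M_K^{\otimes mD}\to L_K^{\otimes D}$, and bound the height via Lemma~\ref{Lem:hauteur de det} together with $\log(r_i!)\le r_i\log r_i$. Your place-by-place decomposition of $h(\varphi)$ makes explicit what the paper's proof compresses into a single displayed estimate, but the ingredients and the logic are identical.
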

\begin{proof}
By Theorem \ref{Thm:semistability et
determinant}, we get, by combining the slope
inequality \eqref{Equ:slope inequality} and
Lemma \ref{Lem:hauteur de det},
\[\begin{split}&\quad\;mD\widehat{\deg}(\overline M)-D\widehat{\deg}(\overline L)=
mD\widehat{\deg}(\overline
M)-\sum_{i=1}^nDb_i\widehat{\mu}(\overline
E_i)\\
&\le\sum_{i=1}^n\frac{A(i)\log(r_i!)}{2r_i}=\sum_{i=1}^n
\frac{Db_i\log(r_i!)}{2r_i} \le\frac 12 D\sum_{i=1}^n
b_i\log r_i,
\end{split}
\]
where we have used the evident estimation
$r!\le r^r$ to obtain the last inequality.
Finally we divide the inequality by $mD$ and
obtain
\[\widehat{\deg}({\overline M})\le\frac{1}{m}\widehat{\deg}
(\overline L)+\frac{1}{2m}\sum_{i=1}^nb_i\log r_i=
\sum_{i=1}^n\frac{b_i}{m}\Big(\widehat{\mu}(\overline
E_i)+\frac{\log r_i}{2}\Big).\]
\end{proof}

Let $m$ be a strictly positive integer which
is divisible by all $r_i$. We apply Theorem
\ref{Thm:majoration de degre d'Arakelov avec
l'hypo de semistability} to the special case
where $\mathcal A$ contains a single map
$\alpha$ such that $\alpha(i)=1$ for any
$i\in\{1,\cdots,n\}$, in other words,
$\overline E=\overline
E_1\otimes\cdots\otimes\overline E_n$, and
where $b_i=m$ for any integer $i$ such that $1\le i\le n$. Then we
get the following upper bound:
\begin{corollary}
If $\overline M$ is a Hermitian line
subbundle of $\overline
E_1\otimes\cdots\otimes\overline E_n$ such
that $M_K$ is semistable for the action of
$G$ relatively to $\mathcal
O_{W^\vee}(m)\otimes\pi^*L_K$, then we have
\begin{equation}\label{Equ:subline bundle semistable}\widehat{\deg}(\overline
M)\le\sum_{i=1}^n
\Big(\widehat{\mu}(\overline E_i)+\frac
12\log(\rang E_i )\Big).\end{equation}
\end{corollary}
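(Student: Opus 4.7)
The plan is to invoke Theorem \ref{Thm:majoration de degre d'Arakelov avec l'hypo de semistability} directly, using the specialized data already prepared in the paragraph just before the corollary. I would take $\mathcal{A}$ to consist of the single map $\alpha$ with $\alpha(i)=1$ for every $i\in\{1,\ldots,n\}$, so that the ambient Hermitian bundle $\overline{E}$ of the theorem coincides with $\overline{E}_1\otimes\cdots\otimes\overline{E}_n$, and I would set $b_i=m$ for each $i$. The divisibility hypothesis $r_i\mid b_i$ appearing in the theorem is automatic because $m$ is assumed divisible by every $r_i$.

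Under these choices the auxiliary Hermitian line bundle specializes to
$$\overline{L}=\bigotimes_{i=1}^{n}(\Lambda^{r_i}\overline{E}_i)^{\otimes m/r_i}.$$
Combining \eqref{Equ:degre of determiant equals to degre} with the additivity of the Arakelov degree under tensor product of Hermitian line bundles, I would then compute
$$\widehat{\deg}(\overline{L})=\sum_{i=1}^{n}\frac{m}{r_i}\,\widehat{\deg}(\Lambda^{r_i}\overline{E}_i)=\sum_{i=1}^{n}\frac{m}{r_i}\,\widehat{\deg}(\overline{E}_i)=m\sum_{i=1}^{n}\widehat{\mu}(\overline{E}_i),$$
so that $\widehat{\deg}(\overline{L})/m=\sum_i\widehat{\mu}(\overline{E}_i)$, matching the first summand of the target bound. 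The second summand is even more immediate: since $b_i=m$, one has $\frac{1}{2m}\sum_i b_i\log(\rang E_i)=\frac{1}{2}\sum_i\log(\rang E_i)$.

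Plugging these two identifications into the inequality supplied by Theorem \ref{Thm:majoration de degre d'Arakelov avec l'hypo de semistability} produces the desired estimate \eqref{Equ:subline bundle semistable}. I do not anticipate any genuine obstacle at this step: the corollary is a mechanical specialization of the theorem it follows, and the real work (the invariant-theoretic construction of Theorem \ref{Thm:semistability et determinant} and the slope inequality \eqref{Equ:slope inequality}) has already been absorbed into the proof of that theorem.
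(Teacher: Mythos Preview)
Your proposal is correct and is exactly the paper's approach: the corollary is obtained by applying Theorem~\ref{Thm:majoration de degre d'Arakelov avec l'hypo de semistability} with $\mathcal A=\{\alpha\}$, $\alpha(i)=1$, and $b_i=m$ for all $i$, so that $b_i/m=1$ and the right-hand side specializes directly to $\sum_i\big(\widehat{\mu}(\overline E_i)+\tfrac12\log(\rang E_i)\big)$.
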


\section{Filtrations of vector spaces}
\label{Sec:filtrations}

\hskip\parindent In this section, we
introduce some basic notation and results on
$\mathbb R$-filtrations of vector spaces,
which we shall use in the sequel. We fix a
field $K$.

\subsection{Definition of filtrations}
\hskip\parindent Let $V$ be a non-zero vector
space of finite rank $r$ over $K$. We call
$\mathbb R$-{\it filtration} of $V$ any
family $\mathcal F=(\mathcal F_\lambda
V)_{\lambda\in\mathbb R }$ of subspaces of
$V$ such that
\begin{enumerate}[1)]
\item $\mathcal F_\lambda V\supset\mathcal
F_{\lambda'}V$ for all $\lambda\le\lambda'$,
\item $\mathcal F_\lambda V=0$ for $\lambda$ sufficiently
positive,
\item $\mathcal F_\lambda V=V$ for
$\lambda$ sufficiently negative,
\item the
function $x\mapsto\rang_K(\mathcal F_xV)$ on
$\mathbb R$ is left continuous.
\end{enumerate}
A filtration $\mathcal F$ of $V$ is
equivalent to the data of a flag
\begin{equation}\label{Equ:drapoeau correpondant a
F}V=V_0\supsetneq V_1\supsetneq
V_2\supsetneq\cdots\supsetneq
V_d=0\end{equation} of $V$ together with a
strictly increasing sequence of real numbers
$(\lambda_i)_{0\le i<d}$. In fact, we have
the relation $\mathcal F_\lambda V=
\bigcup_{\lambda_i\ge\lambda}V_i$. We define
the {\it expectation} of $\mathcal F$ to be
\begin{equation}\label{Equ:expectation of a filtration}
\mathbb E[\mathcal F]:=\sum_{i=0}^{d-1}
\frac{\rang_K(V_{i}/V_{i+1})}
{\rang_KV}\lambda_i.
\end{equation}
Furthermore, we define a function
$\lambda_{\mathcal F}:V\rightarrow\mathbb
R\cup\{+\infty\}$ such that
\begin{equation}\label{Equ:lambda F(x)}\lambda_{\mathcal
F}(x)=\sup\{a\in\mathbb R\,|\,x\in\mathcal
F_aV\}.\end{equation} The function
$\lambda_{\mathcal F}$ takes values in
$\{\lambda_0,\cdots,\lambda_{d-1}\}\cup\{+\infty\}$
and is finite on $\mathbb R\setminus\{0\}$.

\subsection{Spaces of filtrations}

\hskip\parindent Let $Z$ be a subset of
$\mathbb R$. We say that $\mathcal F$ is {\it
supported by $Z$} if $\{\lambda_i\;|\;0\le
i<d\}\subset Z$. We say that a basis
$\mathbf{e}$ of $V$ is {\it compatible} with
$\mathcal F$ if it is compatible with the
flag \eqref{Equ:drapoeau correpondant a F}.
That is, $\#(V_i\cap\mathbf{e})=\rang(V_i)$.

We denote by $\mathbf{Fil}_V$ the set of all
filtrations of $V$. For any non-empty subset
$Z$ of $\mathbb R$, denote by
$\mathbf{Fil}_V^Z$ the set of all filtrations
of $V$ supported by $Z$. Finally, for any
basis $\mathbf{e}$, we use the expression
$\mathbf{Fil}_{\mathbf{e}}$ to denote the set
of all filtrations of $V$ with which
$\mathbf{e}$ is compatible, and we denote by
$\mathbf{Fil}_{\mathbf{e}}^Z$ the subset of
$\mathbf{Fil}_{\mathbf{e}}$ of filtrations
supported by $Z$.

\begin{proposition}
Let $\mathbf{e}=(e_1,\cdots,e_r)$ be a basis
of $V$ and $Z$ be a non-empty subset of
$\mathbb R$. The mapping
$\Phi_{\mathbf{e}}:\mathbf{Fil}_{\mathbf{e}}^Z\rightarrow
Z^r$ defined by
\begin{equation}\label{Equ:bijection Phie(mathcal F)}
\Phi_{\mathbf{e}}(\mathcal
F)=(\lambda_{\mathcal
F}(e_1),\cdots,\lambda_{\mathcal F}(e_r))
\end{equation}
is a bijection.
\end{proposition}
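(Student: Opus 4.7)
The plan is to construct an explicit two-sided inverse of $\Phi_{\mathbf{e}}$, so that bijectivity falls out directly. This splits naturally into verifying surjectivity (construction of a filtration from a tuple) and injectivity (uniqueness of the filtration given its values on $\mathbf{e}$).

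First, for surjectivity, given any tuple $(a_1,\ldots,a_r)\in Z^r$ I would define, for each $\lambda\in\mathbb R$,
\[
\mathcal F_\lambda V := \mathrm{span}_K\{\,e_i\mid a_i\ge\lambda\,\}.
\]
The routine checks are: (1) monotonicity in $\lambda$ is immediate from the set-theoretic inclusion of index sets; (2) for $\lambda>\max_i a_i$ the defining set is empty, so $\mathcal F_\lambda V=0$; (3) for $\lambda\le\min_i a_i$ every $e_i$ qualifies, so $\mathcal F_\lambda V=V$; (4) left continuity of $\lambda\mapsto\rang_K\mathcal F_\lambda V$ follows because the condition $a_i\ge\lambda$ is closed in $\lambda$, so as $\lambda$ increases through a value $a_j$ the dimension drops only when $\lambda$ passes strictly beyond $a_j$. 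The jump values of this rank function lie in $\{a_1,\ldots,a_r\}\subset Z$, hence $\mathcal F\in\mathbf{Fil}_V^Z$, and the basis $\mathbf{e}$ is compatible with $\mathcal F$ since each step of the associated flag is spanned by a subset of $\mathbf{e}$. Finally, directly from the definition \eqref{Equ:lambda F(x)} one reads off $\lambda_{\mathcal F}(e_i)=a_i$, so $\Phi_{\mathbf{e}}(\mathcal F)=(a_1,\ldots,a_r)$.

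For injectivity, suppose $\mathcal F,\mathcal F'\in\mathbf{Fil}_{\mathbf{e}}^Z$ satisfy $\lambda_{\mathcal F}(e_i)=\lambda_{\mathcal F'}(e_i)$ for all $i$. Compatibility of $\mathbf{e}$ with $\mathcal F$ means each step $V_j$ in the associated flag \eqref{Equ:drapoeau correpondant a F} is the span of the $e_i$'s it contains; equivalently, $\mathcal F_\lambda V=\mathrm{span}_K\{\,e_i\mid\lambda_{\mathcal F}(e_i)\ge\lambda\,\}$. The same formula holds for $\mathcal F'$. Since the indexed data on the right coincide for $\mathcal F$ and $\mathcal F'$, we get $\mathcal F_\lambda V=\mathcal F'_\lambda V$ for every $\lambda\in\mathbb R$, hence $\mathcal F=\mathcal F'$.

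There is no real obstacle here; the only point requiring mild care is checking left continuity and the support condition so that the constructed $\mathcal F$ genuinely lies in $\mathbf{Fil}_V^Z$ rather than just in $\mathbf{Fil}_V$. Combining the two directions, the assignment $(a_1,\ldots,a_r)\mapsto \mathcal F$ defined above is a two-sided inverse to $\Phi_{\mathbf{e}}$, proving it is a bijection.
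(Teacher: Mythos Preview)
Your argument is correct. The paper states this proposition without proof, treating it as evident; your construction of the explicit inverse $(a_1,\ldots,a_r)\mapsto(\mathrm{span}_K\{e_i\mid a_i\ge\lambda\})_{\lambda\in\mathbb R}$ and the verification that compatibility forces $\mathcal F_\lambda V=\mathrm{span}_K\{e_i\mid\lambda_{\mathcal F}(e_i)\ge\lambda\}$ is exactly the standard justification one would supply.
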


\begin{proposition}\label{Pro:lambda mathcal F as
inf des formel ineares} Let $v$ be a non-zero
vector in $V$, $F$ be a subfield of $\mathbb
R$ and $\mathbf{e}$ be a basis of $V$. Then
the function $\mathcal
F\mapsto\lambda_{\mathcal F }(v)$ from
$\mathbf{Fil}_{\mathbf{e}}^F$ to $\mathbb R$
can be written as the minimal value of a
finite number of $F$-linear forms.
\end{proposition}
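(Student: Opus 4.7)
The plan is to exploit the bijection $\Phi_{\mathbf{e}}:\mathbf{Fil}_{\mathbf{e}}^F\to F^r$ from the previous proposition in order to transport the function $\mathcal F\mapsto\lambda_{\mathcal F}(v)$ to an explicit function on $F^r$ expressible in terms of the coordinates $\lambda_i:=\lambda_{\mathcal F}(e_i)$.

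First I would expand $v$ in the given basis, writing $v=\sum_{i=1}^r c_i e_i$ with $c_i\in K$, and fix the index set $I(v)=\{i\in\{1,\dots,r\}\mid c_i\neq 0\}$, which is finite and non-empty since $v\neq 0$. The key observation is that when $\mathbf{e}$ is compatible with $\mathcal F$, each step $\mathcal F_a V$ of the filtration is spanned by the $e_i$ it contains, namely $\mathcal F_a V=\mathrm{span}_K\{e_i\mid \lambda_{\mathcal F}(e_i)\ge a\}$; this follows from the correspondence between filtrations and flags in \eqref{Equ:drapoeau correpondant a F} together with the compatibility condition $\#(V_j\cap\mathbf{e})=\rang(V_j)$. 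From this description, $v\in\mathcal F_a V$ if and only if $\lambda_{\mathcal F}(e_i)\ge a$ for every $i\in I(v)$, which, taking the supremum over $a$ and using left continuity (so that the sup is attained), yields the identity
\[
\lambda_{\mathcal F}(v)=\min_{i\in I(v)}\lambda_{\mathcal F}(e_i).
\]

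Once this formula is established, the conclusion is immediate: via the bijection $\Phi_{\mathbf{e}}$, the function $\mathcal F\mapsto\lambda_{\mathcal F}(v)$ becomes the map $(\lambda_1,\dots,\lambda_r)\in F^r\mapsto\min_{i\in I(v)}\lambda_i$, which is the pointwise minimum of the $|I(v)|\le r$ coordinate projections $\ell_i:(\lambda_1,\dots,\lambda_r)\mapsto\lambda_i$. Each $\ell_i$ is an $F$-linear form on $F^r$, so the function is written as the minimum of finitely many $F$-linear forms, as required.

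There is essentially no obstacle here; the only point requiring a little care is verifying that the supremum in the definition \eqref{Equ:lambda F(x)} of $\lambda_{\mathcal F}$ is actually achieved, which is why the left continuity axiom (4) of a filtration is invoked. Everything else is the direct unwinding of the flag/filtration correspondence in the compatible basis $\mathbf{e}$.
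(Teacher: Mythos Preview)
Your proof is correct and follows exactly the same approach as the paper: expand $v$ in the basis $\mathbf{e}$ and observe that $\lambda_{\mathcal F}(v)=\min_{c_i\neq 0}\lambda_{\mathcal F}(e_i)$, which exhibits the function as a minimum of coordinate projections. In fact your write-up is more thorough than the paper's, which simply states this formula without justifying it.
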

\begin{proof}
Let $v=\sum_{i=1}^r a_ie_i$ be the
decomposition of $v$ in the basis
$\mathbf{e}$, then for any filtration
$\mathcal F\in\mathbf{Fil}_{\mathbf{e}}^F$,
we have
\[\lambda_{\mathcal F}(v)=\min_{\begin{subarray}{c}
1\le i\le n\\ a_i\neq 0
\end{subarray}} \lambda_{\mathcal F}(e_i).\]
\end{proof}

\subsection{Construction of filtrations}

\hskip\parindent For any real number
$\varepsilon>0$, we define the {\it dilation}
of $\mathcal F$ by $\varepsilon$ as the
filtration
\begin{equation}\label{Equ:dilation definition}\psi_{\varepsilon}\mathcal
F:=(\mathcal
F_{\varepsilon\lambda}V)_{\lambda\in\mathbb
R}\end{equation} of $V$. Clearly we have
\begin{equation}\label{Equ:dilation}\mathbb E[\psi_{\varepsilon}\mathcal F]
=\varepsilon\mathbb E[\mathcal
F]\qquad\text{and}\qquad\lambda_{\psi_{\varepsilon}\mathcal
F}=\varepsilon\lambda_{\mathcal F}.\end{equation}

Let $(V^{(i)})_{1\le i\le n}$ be a family of
non-zero vector spaces of finite rank over
$K$ and $V=\bigoplus_{i=1}^n V^{(i)}$ be
their direct sum. For each integer $1\le i\le
n$, let $\mathcal F^{(i)}$ be a filtration of
$V^{(i)}$. We construct a filtration
$\mathcal F$ of $V$ such that
\[\mathcal F_\lambda V=\bigoplus_{i=1}^n\mathcal F_\lambda^{(i)} V^{(i)}.\]
The filtration $\mathcal F$ is called the
{\it direct sum} of $\mathcal F^{(i)}$ and is
denoted by $\mathcal
F^{(1)}\oplus\cdots\oplus\mathcal F^{(n)}$.
If for each $1\le i\le n$, $\mathbf{e}^{(i)}$
is a basis of $V^{(i)}$ which is compatible
with $\mathcal F^{(i)}$, then the disjoint
union
$\mathbf{e}^{(1)}\amalg\cdots\amalg\mathbf{e}^{(n)}$,
which is a basis of
$V^{(1)}\oplus\cdots\oplus V^{(n)}$, is
compatible with $\mathcal
F^{(1)}\oplus\cdots\oplus\mathcal F^{(n)}$.
Similarly, if $W=\bigotimes_{i=1}^n V^{(i)}$
is the tensor product of $V^{(i)}$, we
construct a filtration $\mathcal G$ of $W$
such that
\[\mathcal G_\lambda W=\sum_{\lambda_1+\cdots+\lambda_n
\ge\lambda}\bigotimes_{i=1}^n\mathcal
F^{(i)}_{\lambda_i}V^{(i)},\] called the {\it
tensor product} of $\mathcal F^{(i)}$ and
denoted by $\mathcal
F^{(1)}\otimes\cdots\otimes\mathcal F^{(n)}$.
If $\mathbf{e}^{(i)}$ is a basis
of $V^{(i)}$ which is compatible with the
filtration $\mathcal F^{(i)}$, then the basis
\[\mathbf{e}^{(1)}\otimes\cdots\otimes\mathbf{e}^{(n)}:=
\{e_1\otimes\cdots\otimes e_n\,|\,\forall
1\le i\le n,\; e_i\in\mathbf{e}^{(i)}\}\] of
$V^{(1)}\otimes\cdots\otimes V^{(n)}$ is
compatible with $\mathcal
F^{(1)}\otimes\cdots\otimes\mathcal F^{(n)}$.
Finally, for any $\varepsilon>0$,
\begin{equation}\label{Equ:dilatation et produit tensoriel}
\psi_{\varepsilon}(\mathcal
F^{(1)}\otimes\cdots\otimes \mathcal
F^{(n)})=\psi_{\varepsilon}\mathcal F^{(1)}\otimes
\cdots\otimes\psi_{\varepsilon}\mathcal F^{(n)}.
\end{equation}

\subsection{Scalar product on the space of filtrations}

\hskip\parindent Let $V$ be a non-zero vector
space of finite rank $r$ over $K$. If
$\mathcal F$ and $\mathcal G$ are two
filtrations of $V$, then by Bruhat's
decomposition, there always exists a basis
$\mathbf{e}$ of $V$ which is compatible
simultaneously with $\mathcal F$ and
$\mathcal G$. We define the {\it scalar
product} of $\mathcal F$ and $\mathcal G$ as
\begin{equation}\langle\mathcal F,\mathcal G\rangle:=
\frac{1}{r}\sum_{i=1}^r\lambda_{\mathcal F
}(e_i)\lambda_{\mathcal
G}(e_i).\end{equation} This definition does
not depend on the choice of $\mathbf{e}$. The
number $\|\mathcal F\|:=\langle\mathcal
F,\mathcal F\rangle^{\frac 12}$ is called the
{\it norm} of the filtration $\mathcal F$.
Notice that $\|\mathcal F\|=0$ if and only if
$\mathcal F$ is supported by $\{0\}$. In this
case, we say that the filtration $\mathcal F$
is {\it trivial}.
\begin{proposition}\label{Pro:euclidean space of filtrations}
Let $\mathbf{e}$ be a basis of $V$. Then the
function $(x,y)\mapsto r
\langle\Phi_{\mathbf{e}}^{-1}(x),
\Phi_{\mathbf{e}}^{-1}(y)\rangle$ on $\mathbb
R^r\times\mathbb R^r$ coincides with the
usual Euclidean product on $\mathbb R^r$,
where
$\Phi_{\mathbf{e}}:\mathbf{Fil}_{\mathbf{e}}\rightarrow
\mathbb R^r$ is the bijection defined in
\eqref{Equ:bijection Phie(mathcal F)}.
\end{proposition}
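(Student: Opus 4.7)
The plan is to unwind the definitions and observe that the statement is essentially tautological once one uses the basis $\mathbf{e}$ itself to compute the scalar product. Indeed, for any two filtrations $\mathcal F, \mathcal G \in \mathbf{Fil}_{\mathbf{e}}$, the basis $\mathbf{e}$ is by definition compatible with both, so I may take it as the simultaneously compatible basis required in the formula defining $\langle \mathcal F, \mathcal G\rangle$. Writing $x = \Phi_{\mathbf{e}}(\mathcal F)$ and $y = \Phi_{\mathbf{e}}(\mathcal G)$, so that $x_i = \lambda_{\mathcal F}(e_i)$ and $y_i = \lambda_{\mathcal G}(e_i)$, the definition immediately yields $r\langle \Phi_{\mathbf{e}}^{-1}(x), \Phi_{\mathbf{e}}^{-1}(y)\rangle = \sum_{i=1}^r x_i y_i$, which is the standard Euclidean inner product on $\mathbb R^r$.

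The only genuine content of the proof, tacitly assumed by the definition of $\langle\cdot,\cdot\rangle$ itself, is the independence of this quantity from the choice of a basis compatible with both filtrations. I would establish this by a regrouping argument. Given any basis $\mathbf{e}$ compatible with $\mathcal F$ and $\mathcal G$, rewrite
\[\sum_{i=1}^r \lambda_{\mathcal F}(e_i)\lambda_{\mathcal G}(e_i) = \sum_{(\lambda,\mu)}\lambda\mu\, n_{\lambda,\mu}(\mathbf{e}),\]
where $n_{\lambda,\mu}(\mathbf{e}) = \#\{i : \lambda_{\mathcal F}(e_i) = \lambda,\ \lambda_{\mathcal G}(e_i) = \mu\}$. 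The key point is that the vectors $e_i$ indexed by a fixed pair $(\lambda,\mu)$ descend to a basis of the bigraded piece $(\mathcal F_\lambda V \cap \mathcal G_\mu V)/((\mathcal F_{\lambda^+} V \cap \mathcal G_\mu V) + (\mathcal F_\lambda V\cap \mathcal G_{\mu^+}V))$, so $n_{\lambda,\mu}(\mathbf{e})$ equals the dimension of an intrinsically defined subquotient of $V$ and hence depends neither on $\mathbf{e}$ nor on any other choice. This gives a basis-free formula for the sum.

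Combining the two observations gives the proposition. The main (mild) obstacle is the bigraded decomposition claim used for well-definedness; it is handled by iterating the standard fact that a basis compatible with a flag induces bases on all successive subquotients, applied first to $\mathcal F$ and then to $\mathcal G$ restricted to each graded piece.
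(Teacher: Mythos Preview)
Your proposal is correct, and in fact gives more than the paper does: the paper states this proposition without proof, treating it as an immediate consequence of the definitions (as your first paragraph rightly identifies). The well-definedness of $\langle\cdot,\cdot\rangle$ is likewise only asserted in the paper, so your bigraded-piece argument supplies content that the paper omits; this is a standard and valid justification.
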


\subsection{Construction of filtration from subquotients}
\label{Subsec:Construction of filtration from
subquotients} \hskip\parindent Let $V$ be a
non-zero vector space of finite rank over $K$
and $\mathcal F$ be a filtration of $V$
corresponding to the flag $V=V_0\supsetneq
V_1\supsetneq V_2\supsetneq\cdots \supsetneq
V_d=0$ together with the sequence
$(\lambda_j)_{0\le j< d}$. For any integer $j$ such that
$0\le j< d$, we pick a basis $\mathbf{e}^j$
of the subquotient $V_{j}/V_{j+1}$. After
choosing a preimage of $\mathbf{e}^j$ in
$V_{j}$ and taking the disjoint union of the
preimages, we get a basis
$\mathbf{e}=(e_1,\cdots,e_r)$ of $V$ which is
clearly compatible with the filtration
$\mathcal F$. The basis $\mathbf{e}$ defines
a natural isomorphism $\Psi$ form $V$ to
$\bigoplus_{j=0}^{d-1}(V_{j}/V_{j+1})$ which
sends $e_i$ to its class in
$V_{\tau(i)}/V_{\tau(i)+1}$, where
$\tau(i)=\max\{j\,|\,e_i\in V_j\}$.

For any integer $j$ such that $0\le j\le d-1$, let
$\mathcal G^j$ be a filtration of
$V_{j}/V_{j+1}$ with which $\mathbf{e}^j$ is
compatible. We construct a filtration
$\mathcal G$ on $V$ which is the direct sum
via $\Psi$ of $(\mathcal G^j)_{0\le j\le
d-1}$. Note that the basis $\mathbf{e}$ is
compatible with the new filtration $\mathcal
G$. If $e_i$ is an element in $\mathbf{e}$,
then $\lambda_{\mathcal
G}(e_i)=\lambda_{\mathcal
G^{\tau(i)}}(\Psi(e_i))$. Therefore we have
\begin{equation}\label{Equ:esperance de filtration sousquotient}
\mathbb E[\mathcal G]=\frac{1}{r}\sum_{j=0}^{d-1}
\mathbb E[\mathcal G^j
]{\rang_K(V_j/V_{j+1})},\qquad\langle\mathcal
F,\mathcal G\rangle=\frac{1}{r}\sum_{j=0}^{d-1}
\lambda_j\mathbb E[\mathcal
G^j]\rang_K(V_j/V_{j+1}).\end{equation}

\section{More facts in geometric invariant theory}
\label{Sec:More facts in geometric invariant
theory} \hskip\parindent We shall establish
in this section the explicit version of a
result of Ramanan and Ramanathan
\cite{Ramanan_Ramanathan} (Proposition 1.12)
for our particular purpose, along the path
indicated by Totaro \cite{Totaro96} in his
proof of Fontaine's conjecture.

Let $K$ be a perfect field. If $G$ is a
reductive group over $\Spec K$, we call {\it
one-parameter subgroup} of $G$ any morphism
of $K$-group schemes from $\mathbb
G_{\mathrm{m},K}$ to $G$. Let $X$ be a
$K$-scheme on which $G$ acts. If $x$ is a
rational point of $X$ and if $h$ is a
one-parameter subgroup of $G$, then we get a
$K$-morphism from $\mathbb G_{\mathrm{m},K}$
to $X$ given by the composition
\[\xymatrix{\relax \mathbb G_{\mathrm{m},K}\ar[r]^-h&G\ar[r]^-{\sim}& G\times_K\Spec
K\ar[r]^-{\Id\times
x}&G\times_KX\ar[r]^-{\sigma}&X},\] where
$\sigma$ is the action of the group. If in
addition $X$ is proper over $\Spec K$, this
morphism extends in the unique way to a
$K$-morphism $f_{h,x}$ from $\mathbb A_K^1$
to $X$. We denote by $0$ the unique element
in $\mathbb A^1(K)\setminus\mathbb
G_{\mathrm{m}}(K)$. The morphism $f_{h,x}$
sends the point $0$ to a rational point of
$X$ which is invariant by the action of
$\mathbb G_{\mathrm{m},K}$. If $L$ is a
$G$-linearized line bundle on $X$, then the
action of $\mathbb G_{\mathrm{m},K}$ on
$L|_{f_{h,x}(0)}$ defines a character of
$\mathbb G_{\mathrm{m},K}$ of the form
\[t\mapsto t^{\mu(x,h,L)},\text{ where }\mu(x,h,L)\in\mathbb Z.\] Furthermore, if we
denote by $\Pic^G(X)$ the group of
isomorphism classes of all $G$-linearized
line bundles, then $\mu(x,h,\cdot)$ is a
homomorphism of groups from $\Pic^G(X)$ to
$\mathbb Z$.
\begin{remark}
In \cite{Mumford94}, the authors have defined
the $\mu$-invariant with a minus sign.
\end{remark}

We now recall a well-known result which gives
a semistability criterion for rational points
in a projective variety equipped with an
action of a reductive group.
\begin{theorem}[Hilbert-Mumford-Kempf-Rousseau]
\label{Thm:Hilbert-Mumford criterion} Let $G$
be a reductive group which acts on a
projective variety $X$ over $\Spec K$, $L$ be
an ample $G$-linearized line bundle on $X$
and $x\in X(K)$ be a rational point. The
point $x$ is semistable for the action of $G$
relatively to $L$ if and only if
$\mu(x,h,L)\ge 0$ for any one-parameter
subgroup  $h$ of $G$.
\end{theorem}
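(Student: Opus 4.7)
The plan is to prove the two implications separately. The forward direction is straightforward; the reverse is the substantive Kempf--Rousseau content and I will treat it as the main obstacle.

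For the forward direction, suppose $x$ is semistable, so there is an integer $D\ge 1$ and a $G$-invariant section $s\in H^0(X,L^{\otimes D})$ with $s(x)\neq 0$. Given any one-parameter subgroup $h$ of $G$, I would pull $s$ back along the morphism $f_{h,x}\colon \mathbb A^1_K\to X$ to obtain a section $\tilde s=f_{h,x}^*s$ of $f_{h,x}^*L^{\otimes D}$ on $\mathbb A^1_K$. Because $s$ is $G$-invariant and $h$ factors through $G$, the section $\tilde s$ is $\mathbb G_{\mathrm m,K}$-equivariant; it is non-zero at $1\in\mathbb G_{\mathrm m}(K)$ since $f_{h,x}(1)=x$. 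Trivializing $f_{h,x}^*L^{\otimes D}$ on $\mathbb G_{\mathrm m,K}$ as a $\mathbb G_{\mathrm m,K}$-equivariant line bundle and looking at the induced action on the fibre over $0$, which is of weight $D\mu(x,h,L)$, shows that $\tilde s$ corresponds to a Laurent monomial on $\mathbb G_{\mathrm m}$ whose order at $0$ equals $-D\mu(x,h,L)$; regularity at $0$ forces $D\mu(x,h,L)\ge 0$, and hence $\mu(x,h,L)\ge 0$.

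For the reverse direction I would argue by contrapositive: assuming $x$ is unstable, I must exhibit a one-parameter subgroup $h$ with $\mu(x,h,L)<0$. Using that $L$ is ample and $G$-linearized, after replacing $L$ by a suitable positive tensor power I would embed $X$ equivariantly into $\mathbb P(V^\vee)$ for some finite-dimensional $G$-representation $V$ with $L$ pulled back from $\mathcal O_{V^\vee}(1)$. Choose a lift $\tilde x\in V^\vee\setminus\{0\}$ of $x$. Instability of $x$ means, by the first principal theorem of invariant theory (or rather by Hilbert's observation that the ring of invariants separates closed orbits), that $0$ lies in the Zariski closure of the $G$-orbit of $\tilde x$ in $V^\vee$. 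The key non-trivial input is the Kempf--Rousseau theorem on the existence of an \emph{adapted} one-parameter subgroup: the instability function $h\mapsto \mu(\tilde x,h,\mathcal O(1))/\lVert h\rVert$ on non-trivial one-parameter subgroups, for a $G$-invariant length $\lVert\cdot\rVert$, attains a strictly negative infimum, realised by a distinguished conjugacy class of $h$'s. Any representative of this class satisfies $\mu(x,h,L)<0$, contradicting the hypothesis.

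The main obstacle is thus the Kempf--Rousseau step, which in characteristic zero or even in arbitrary characteristic requires (i) the $G$-invariant length on one-parameter subgroups, (ii) the identification of the stabilizer of an optimal $h$ with a parabolic subgroup of $G$ to rigidify conjugation, and (iii) a convexity/limit argument on the spherical building of $G$ to show the infimum is attained. These inputs replace the direct combinatorial minimization that worked in Mumford's original treatment for $G=\mathrm{GL}$. Rather than reconstruct this machinery, I would invoke the original references (Kempf 1978, Rousseau 1978, cited in the introduction) as a black box, so that the only step carried out in detail is the elementary forward implication together with the reduction to $\mathbb P(V^\vee)$ via the ample $G$-linearization.
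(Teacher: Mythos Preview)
The paper does not prove this theorem. It is stated as a classical result and immediately attributed to Mumford (for algebraically closed $K$) and to Kempf and Rousseau (for general perfect $K$); no argument is given beyond these citations. Your proposal therefore already goes further than the paper: you sketch the easy direction explicitly and, for the hard direction, reduce to an affine cone and then invoke Kempf--Rousseau as a black box, which is exactly what the paper's discussion after the theorem indicates one should do.

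Two small points on your write-up. First, in the forward direction your sign bookkeeping is off: with the paper's convention (see the Remark following the definition of $\mu$, noting the sign differs from Mumford's), the invariant section pulls back to $c\,t^{D\mu(x,h,L)}$ in a suitable equivariant trivialization, so its order at $0$ is $+D\mu(x,h,L)$, not $-D\mu(x,h,L)$; regularity then gives $\mu\ge 0$ as you conclude. Second, the phrase ``by the first principal theorem of invariant theory'' is a misattribution: the relevant fact is that for a reductive $G$ the affine GIT quotient separates closed orbits (so $0\in\overline{G\cdot\tilde x}$ iff every non-constant invariant vanishes at $\tilde x$), not the description of tensor invariants of $\mathrm{GL}$. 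Your parenthetical correction is the right statement; drop the reference to the first principal theorem.
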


This theorem has been originally proved by
Mumford (see \cite{Mumford94}) for the case
where $K$ is algebraically closed. Then it
has been independently proved in all
generality by Kempf \cite{Kempf78} and
Rousseau \cite{Rousseau78}, where Kempf's
approach has been revisited by Ramanan and
Ramanathan \cite{Ramanan_Ramanathan} to prove
that the tensor product of two semistable
vector bundle on a smooth curve (over a
perfect field) is also semistable. The idea
of Kempf is to choose a special one-parameter
subgroup $h_0$ of $G$ destabilizing $x$,
which minimizes a certain function. The
uniqueness of his construction allows us to
descend to a smaller field. Later Totaro
\cite{Totaro96} has introduced a new approach
of Kempf's construction and thus found an
elegant proof of Fontaine's conjecture.

In the rest of this section, we recall
Totaro's approach of Hilbert-Mumford
criterion in our setting. We begin by
calculating explicitly the number
$\mu(x,h,L)$ using filtrations introduced in
the previous section.

Let $V$ be a vector space of finite rank over
$K$ and $\rho:G\rightarrow\mathbb{GL}(V)$ be
a representation of $G$ on $V$. If $h:\mathbb
G_{\mathrm{m},K}\rightarrow G$ is a
one-parameter subgroup, then the
multiplicative group $\mathbb
G_{\mathrm{m},K}$ acts on $V$ via $h$ and
$\rho$. Hence we can decompose $V$ into
direct sum of eigenspaces. More precisely, we
have the decomposition
$V=\bigoplus_{i\in\mathbb Z}V(i)$, where the
action of $\mathbb G_{\mathrm{m},K}$ on
$V(i)$ is given by the composition
\[\xymatrix{\relax \mathbb G_{\mathrm{m},K}\times_K V(i)\ar[rr]
^-{(t\mapsto t^i)\times\Id}&&\mathbb
G_{\mathrm{m},K}\times_KV(i)\ar[r]&V(i)},\]
the second arrow being the scalar
multiplication structure on $V(i)$. We then
define a filtration $\mathcal F^{\rho,h}$
(supported by $\mathbb Z$) of $V$ such that
\[\mathcal F_{\lambda}^{\rho,h}V=\sum_{i\ge\lambda}V(i)\qquad
\qquad\text{where }\lambda\in\mathbb R,\]
called the {\it filtration associated to $h$}
relatively to the representation $\rho$. If
there is no ambiguity on the representation,
we also write $\mathcal F^h$ instead of
$\mathcal F^{\rho,h}$ to simplify the
notation. If $G=\mathbb{GL}(V)$ and if $\rho$
is the canonical representation, then for any
filtration $\mathcal F$ of $V$ supported by
$\mathbb Z$, there exists a one-parameter
subgroup $h$ of $G$ such that the filtration
associated to $h$ equals $\mathcal F$.

From the scheme-theoretical point of view,
the algebraic group $G$ acts via the
representation $\rho$ on the projective space
$\mathbb P(V^\vee)$.

The following result is in
\cite{Mumford94}
Proposition 2.3. Here we work on the dual space $V^\vee$.

\begin{proposition}\label{Pro:Totaro 1}
Let $x$ be a rational point of $\mathbb
P(V^\vee)$, viewed as a one-dimensional
subspace of $V$ and let $v_x$ be an arbitrary
non-zero vector in $x$. Then
\[\mu(x,h,\mathcal O_{V^\vee}(1))=
-\lambda_{\mathcal F^{\rho,h}}(v_x),\] where
the function $\lambda_{\mathcal F^{\rho,h}}$
is defined in \eqref{Equ:lambda F(x)}.
\end{proposition}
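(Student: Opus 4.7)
The plan is to reduce to a direct calculation in a basis of weight vectors. First, since $\mathbb{G}_{\mathrm{m},K}$ acts diagonalizably via $\rho\circ h$, we can pick a basis $\mathbf{e}=(e_1,\dots,e_r)$ of $V$ compatible with the weight decomposition $V=\bigoplus_{i\in\mathbb{Z}}V(i)$, with $e_j\in V(w_j)$ so that $h(t)\cdot e_j=t^{w_j}e_j$. This basis is tautologically compatible with $\mathcal{F}^{\rho,h}$, and for $v_x=\sum_{j}a_je_j$ with some $a_j\ne 0$, Proposition \ref{Pro:lambda mathcal F as inf des formel ineares} (or the defining formula \eqref{Equ:lambda F(x)}) gives
\[
\lambda_{\mathcal{F}^{\rho,h}}(v_x)=\min\{w_j\mid a_j\ne 0\}=:w.
\]

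Next I would compute the morphism $f_{h,x}\colon\mathbb{A}^1_K\to\mathbb{P}(V^\vee)$ explicitly. On $\mathbb{G}_{\mathrm{m},K}$, the orbit point $h(t)\cdot x$ is the line spanned by $h(t)\cdot v_x=\sum_j a_jt^{w_j}e_j$. Factoring out $t^w$, which is invertible on $\mathbb{G}_{\mathrm{m},K}$, this line is also spanned by
\[
\widetilde v(t):=\sum_j a_jt^{w_j-w}e_j,
\]
and all exponents $w_j-w$ are now nonnegative. The map $t\mapsto [\widetilde v(t)]$ extends to the whole of $\mathbb{A}^1_K$, and this extension is $f_{h,x}$ by uniqueness. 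Setting $t=0$, only the components with $w_j=w$ survive, so the point $x_0:=f_{h,x}(0)$ is the line in $V$ spanned by $v_0:=\sum_{w_j=w}a_je_j$, a nonzero weight-$w$ eigenvector for $h$.

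It remains to determine the character of $\mathbb{G}_{\mathrm{m},K}$ on the fibre $\mathcal{O}_{V^\vee}(1)|_{x_0}$. By the definition of $\mathcal{O}_{V^\vee}(1)$ as the universal rank-$1$ quotient of $\pi^*V^\vee$ on $\mathbb{P}(V^\vee)$, its fibre at $x_0$ is naturally the quotient of $V^\vee$ corresponding to the hyperplane $x_0^\perp\subset V^\vee$, i.e.\ canonically isomorphic to the dual line $x_0^\vee$. The action of $h(t)$ on $x_0\subset V$ is multiplication by $t^w$ (since $v_0$ has weight $w$), so the induced action on the dual line $x_0^\vee=\mathcal{O}_{V^\vee}(1)|_{x_0}$ is multiplication by $t^{-w}$. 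By definition of $\mu$, this means $\mu(x,h,\mathcal{O}_{V^\vee}(1))=-w=-\lambda_{\mathcal{F}^{\rho,h}}(v_x)$, which is the claim.

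The only real obstacle is bookkeeping of duals and signs: one must be careful that $\mathcal{O}_{V^\vee}(1)|_{x_0}$ is identified with $x_0^\vee$ rather than $x_0$, which is the source of the crucial minus sign; everything else is a transparent weight computation, and the result is independent of the choice of $v_x$ because rescaling $v_x$ does not change the set $\{j:a_j\ne 0\}$.
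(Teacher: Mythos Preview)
Your argument is correct and follows essentially the same route as the paper: decompose $v_x$ into weight components, identify the limit point $f_{h,x}(0)$ as the line spanned by the lowest-weight part, and read off the character on the dual fibre $\mathcal O_{V^\vee}(1)|_{x_0}\cong x_0^\vee$. Your identification $\lambda_{\mathcal F^{\rho,h}}(v_x)=\min\{w_j:a_j\neq 0\}$ is the right one (the paper's phrase ``maximal index'' is a slip; with $\mathcal F_\lambda V=\sum_{i\ge\lambda}V(i)$ it is indeed the minimal weight that matters), and your explicit normalization $\widetilde v(t)=t^{-w}h(t)\cdot v_x$ makes the extension to $\mathbb A^1_K$ and the evaluation at $t=0$ transparent.
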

\begin{proof}
Let $v_x=\sum_{i\in\mathbb Z}v_x(i)$ be the
canonical decomposition of $v_x$. Let
$i_0=\lambda_{\mathcal F^{\rho,h}}(v_x)$. By
definition, it is the maximal index $i$ such
that $v_x(i)$ is non-zero. Furthermore,
$f_{h,x}(0)$ is just the rational point $x_0$
which corresponds to the subspace of $V$
generated by $v_x(i_0)$. The restriction of
$\mathcal O_{V^\vee}(1)$ on $x_0$ identifies
with the quotient $(Kv_x(i_0))^\vee$ of
$V^\vee$. Since the action of $\mathbb
G_{\mathrm{m},K}$ on $v_x(i_0)$ via $h$ is
the multiplication by $t^{i_0}$, its action
on $(Kv_x(i_0))^\vee$ is then the
multiplication by $t^{-i_0}$. Therefore,
$\mu(x,h,\mathcal
O_{V^\vee}(1))=-i_0=-\lambda_{\mathcal
F^{\rho,h}}(v_x)$.
\end{proof}

Let $(V_i)_{1\le i\le n}$ be a finite family
of non-zero vector spaces of finite rank over
$K$. For any integer $1\le i\le n$, let $r_i$
be the rank of $V_i$. Let $G$ be the
algebraic group
$\mathbb{GL}(V_1)\times\cdots\times\mathbb{GL}(V_n)$.
We suppose that the algebraic group $G$ acts
on a vector space $V$. Let $\pi:\mathbb
P(V^\vee)\rightarrow\Spec K$ be the canonical
morphism. For each integer $1\le i\le n$, we
choose an integer $m_i$ which is divisible by
$r_i$. Let $M$ be the $G$-linearized line
bundle on $\mathbb P(V^\vee)$ defined as
\[M:=\bigotimes_{i=1}^n
\pi^*(\Lambda^{r_i} V_i)^{\otimes m_i/r_i}.\]
It is a trivial line bundle on $\mathbb
P(V^\vee)$ with possibly non-trivial
$G$-action. Notice that any one-parameter
subgroup of $G$ is of the form
$h=(h_1,\cdots,h_n)$, where $h_i$ is a
one-parameter subgroup of $\mathbb{GL}(V_i)$.
Let $\mathcal F^{h_i}$ be the filtration of
$V_i$ associated to $h_i$ relatively to the
canonical representation of
$\mathbb{GL}(V_i)$ on $V_i$. The action of
$\mathbb G_{\mathrm{m},K}$ via $h_i$ on
$\Lambda^{r_i} V_i$ is nothing but the
multiplication by $t^{r_i\mathbb E[\mathcal
F^{ h_i}]}$. Then we get the following
result.
\begin{proposition}\label{Totaro 2}
With the notation above, for any rational
point $x$ of $\mathbb P(V^\vee)$, we have
\[\mu(x,h,M)=\sum_{i=1}^n m_i\mathbb E[\mathcal F^{h_i}].\]
\end{proposition}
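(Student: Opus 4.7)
The plan is to reduce the computation of $\mu(x,h,M)$ to the trivial case of a line bundle pulled back from $\Spec K$, and then identify the resulting character of $\mathbb G_{\mathrm m,K}$ using the expectation of the filtrations $\mathcal F^{h_i}$.

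First I would observe that $M=\pi^*N$, where $N:=\bigotimes_{i=1}^n(\Lambda^{r_i}V_i)^{\otimes m_i/r_i}$ is viewed as a one-dimensional $K$-vector space carrying a $G$-action (i.e.\ a $G$-linearized line bundle on $\Spec K$). Since $\pi\circ f_{h,x}$ factors through $\Spec K$, the $\mathbb G_{\mathrm m,K}$-equivariant line bundle $f_{h,x}^*M$ on $\mathbb A^1_K$ is canonically identified with the trivial bundle with constant fibre $N$, on which $\mathbb G_{\mathrm m,K}$ acts by the character by which it acts on $N$ via $h$. Consequently $\mu(x,h,M)$ is simply this character and is in particular independent of the rational point $x$.

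Next, because $\mu(x,h,\cdot)$ is a homomorphism from $\Pic^G(\mathbb P(V^\vee))$ to $\mathbb Z$, I can break up the tensor product:
\[\mu(x,h,M)=\sum_{i=1}^n \frac{m_i}{r_i}\,\mu\bigl(x,h,\pi^*\Lambda^{r_i}V_i\bigr),\]
and the $i$-th summand equals the character by which $\mathbb G_{\mathrm m,K}$ acts on $\Lambda^{r_i}V_i$ via $h_i$ (the other factors $h_j$, $j\neq i$, act trivially on $\Lambda^{r_i}V_i$).

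Finally, to compute each character, I would use the decomposition $V_i=\bigoplus_{k\in\mathbb Z}V_i(k)$ into $h_i$-eigenspaces that defines the filtration $\mathcal F^{h_i}$. Taking top exterior powers gives a canonical isomorphism $\Lambda^{r_i}V_i\cong\bigotimes_{k}\Lambda^{\dim V_i(k)}V_i(k)$ of $\mathbb G_{\mathrm m,K}$-modules, and $\mathbb G_{\mathrm m,K}$ acts on each factor $\Lambda^{\dim V_i(k)}V_i(k)$ as multiplication by $t^{k\dim V_i(k)}$. Hence the character is $t\mapsto t^{\sum_k k\dim V_i(k)}$, and by the definition of expectation in \eqref{Equ:expectation of a filtration}, $\sum_k k\dim V_i(k)=r_i\mathbb E[\mathcal F^{h_i}]$. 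Combining,
\[\mu(x,h,M)=\sum_{i=1}^n\frac{m_i}{r_i}\cdot r_i\,\mathbb E[\mathcal F^{h_i}]=\sum_{i=1}^n m_i\,\mathbb E[\mathcal F^{h_i}].\]
There is no real obstacle: the argument merely assembles Proposition \ref{Pro:Totaro 1}, the additivity of $\mu$, and the expectation computation already flagged in the paragraph preceding the statement; the only point deserving care is the first step, namely the verification that the $\mu$-invariant of a pulled-back bundle reduces to a character on the base.
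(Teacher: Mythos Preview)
Your argument is correct and is essentially an expanded version of the paper's own justification, which consists solely of the sentence preceding the statement (that $\mathbb G_{\mathrm m,K}$ acts on $\Lambda^{r_i}V_i$ via $h_i$ as multiplication by $t^{r_i\mathbb E[\mathcal F^{h_i}]}$). One small inaccuracy: in your closing summary you list Proposition~\ref{Pro:Totaro 1} as an ingredient, but your proof never uses it---that proposition concerns $\mathcal O_{V^\vee}(1)$, whereas here only the additivity of $\mu$ and the character computation on $\Lambda^{r_i}V_i$ are needed.
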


We now introduce the Kempf's destabilizing
flag for the action of a finite product of
general linear groups. Consider a family
$(V^{(i)})_{1\le i\le n}$ of finite
dimensional non-zero vector space over $K$.
Let $W$ be the tensor product
$V^{(1)}\otimes_K\cdots\otimes_K V^{(n)}$ and
$G$ be the algebraic group
$\mathbb{GL}(V^{(1)})\times\cdots\times\mathbb{GL}(V^{(n)})$.
For any integer $i$ such that $1\le i\le n$, let $r^{(i)}$
be the rank of $V^{(i)}$. The group $G$ acts
naturally on $W$ and hence on $\mathbb
P(W^\vee)$. We denote by $\pi:\mathbb
P(W^\vee)\rightarrow\Spec K $ the canonical
morphism. Let $m$ be a strictly positive
integer which is divisible by all $r^{(i)}$
and $L$ be a $G$-linearized line bundle on
$\mathbb P(W^\vee)$ as follows:
\begin{equation}L:=\mathcal
O_{W^\vee}(m)\otimes\bigotimes_{i=1}^n
\pi^*(\det V^{(i)})^{\otimes
(m/r^{(i)})}.\end{equation}

For any rational point $x$ of $\mathbb
P(W^\vee)$, we define a function
$\Lambda_x:\mathbf{Fil}_{V^{(1)}}^{\mathbb Q
}\times\cdots\times\mathbf{Fil}_{V^{(n)}}^{\mathbb
Q}\rightarrow\mathbb R$ such that
\begin{equation}\label{Equ:Psi_x definition}
\Lambda_x(\mathcal G^{(1)},\cdots,\mathcal
G^{(n)})= \frac{\mathbb E[\mathcal
G^{(1)}]+\cdots+\mathbb E[\mathcal G^{(n)}]-
\lambda_{\mathcal G^{(1)}\otimes\cdots\otimes
\mathcal G^{(n)}}(v_x)}{(\|\mathcal
G^{(1)}\|^2+\cdots+\|\mathcal
G^{(n)}\|^2)^{\frac 12}}
\end{equation}
if at least one filtration among the
$\mathcal G^{(i)}$'s is non-trivial, and
$\Lambda_x(\mathcal G^{(1)},\cdots,\mathcal
G^{(n)})=0$ otherwise. We recall that in
\eqref{Equ:Psi_x definition}, $v_x$ is an
arbitrary non-zero element in $x$. Note that
the function $\Lambda_x$ is invariant by
dilation. In other words, for any positive
number $\varepsilon>0$,
\[\Lambda_x(\psi_\varepsilon\mathcal G^{(1)},\cdots,\psi_{\varepsilon}
\mathcal G^{(n)})=\Lambda_x(\mathcal
G^{(1)},\cdots,\mathcal G^{(n)}),\] where the
dilation $\psi_{\varepsilon}$ is defined in
\eqref{Equ:dilation definition}.

\begin{proposition}\label{Pro:Hilbert Mumford Totaro criterion}
Let $x$ be a rational point of $\mathbb
P(W^\vee)$. Then the point $x$ is {\bf not}
semistable for the action of $G$ relatively
to $L$ if and only if the function
$\Lambda_x$ defined above takes at least one
strictly negative value.
\end{proposition}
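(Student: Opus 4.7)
The plan is to reduce the statement to the Hilbert--Mumford--Kempf--Rousseau criterion (Theorem~\ref{Thm:Hilbert-Mumford criterion}) by rewriting $\mu(x,h,L)$ for a one-parameter subgroup $h$ of $G$ in the filtration language already developed. Any one-parameter subgroup of $G$ has the form $h=(h_1,\ldots,h_n)$ with $h_i$ a one-parameter subgroup of $\mathbb{GL}(V^{(i)})$, so it produces a tuple of filtrations $\mathcal F^{h_i}$ of $V^{(i)}$, all supported by $\mathbb Z$.

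The first step is to identify the filtration associated to $h$ on $W=V^{(1)}\otimes\cdots\otimes V^{(n)}$ under the tensor product representation with the tensor product filtration $\mathcal F^{h_1}\otimes\cdots\otimes\mathcal F^{h_n}$; this is immediate from the fact that the eigenspace decomposition of $W$ under $\mathbb G_{\mathrm m,K}$ acting via $h$ is the direct sum of tensor products of eigenspaces of the $h_i$, with weights that add. Having done this, Proposition~\ref{Pro:Totaro 1} applied to the factor $\mathcal O_{W^\vee}(m)$ and Proposition~\ref{Totaro 2} (with all $m_i=m$) applied to $\bigotimes_i\pi^*(\det V^{(i)})^{\otimes m/r^{(i)}}$ give
\[\mu(x,h,L)=m\Bigl(\sum_{i=1}^n\mathbb E[\mathcal F^{h_i}]-\lambda_{\mathcal F^{h_1}\otimes\cdots\otimes\mathcal F^{h_n}}(v_x)\Bigr),\]
that is, up to the positive factor $m$, the numerator of $\Lambda_x(\mathcal F^{h_1},\ldots,\mathcal F^{h_n})$.

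From here both directions follow quickly. If $x$ is not semistable, Theorem~\ref{Thm:Hilbert-Mumford criterion} provides some $h$ with $\mu(x,h,L)<0$; at least one $\mathcal F^{h_i}$ must then be non-trivial (otherwise the bracket above vanishes), so $\Lambda_x(\mathcal F^{h_1},\ldots,\mathcal F^{h_n})$ is defined by the formula~\eqref{Equ:Psi_x definition} and is strictly negative. Conversely, given a tuple $(\mathcal G^{(1)},\ldots,\mathcal G^{(n)})$ of $\mathbb Q$-supported filtrations with $\Lambda_x(\mathcal G^{(1)},\ldots,\mathcal G^{(n)})<0$, I would clear denominators: choose a positive integer $N$ such that every $\psi_N\mathcal G^{(i)}$ is $\mathbb Z$-supported, use the dilation invariance of $\Lambda_x$ recorded after~\eqref{Equ:Psi_x definition} to preserve the strict inequality, realize each $\psi_N\mathcal G^{(i)}$ as $\mathcal F^{h_i}$ for some one-parameter subgroup $h_i$ of $\mathbb{GL}(V^{(i)})$, and conclude that $h=(h_1,\ldots,h_n)$ destabilizes $x$. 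The only point that requires a little care is this passage between $\mathbb Q$- and $\mathbb Z$-supported filtrations, and it is handled cleanly by dilation invariance.
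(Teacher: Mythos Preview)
Your proof is correct and follows essentially the same route as the paper: you derive the identity $\mu(x,h,L)=m\bigl(\sum_i\mathbb E[\mathcal F^{h_i}]-\lambda_{\mathcal F^{h_1}\otimes\cdots\otimes\mathcal F^{h_n}}(v_x)\bigr)$ from Propositions~\ref{Pro:Totaro 1} and~\ref{Totaro 2}, then invoke the Hilbert--Mumford criterion in both directions, using dilation invariance to pass from $\mathbb Q$-supported to $\mathbb Z$-supported filtrations. Your explicit remark that at least one $\mathcal F^{h_i}$ must be non-trivial when $\mu(x,h,L)<0$ is a small point the paper leaves implicit.
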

\begin{proof}
By Propositions \ref{Pro:Totaro 1} and
\ref{Totaro 2}, for any rational point $x$ of
$\mathbb P(W^\vee)$,
\begin{equation}\label{Equ:calcul de mu}\mu(x,h,L)=m\Big(\sum_{i=1}^n\mathbb E[\mathcal
F^{h_i}]- \lambda_{\mathcal
F^h}(v_x)\Big).\end{equation}

``$\Longrightarrow$'': By the Hilbert-Mumford
criterion (Theorem \ref{Thm:Hilbert-Mumford
criterion}), there exists a one-parameter
subgroup $h=(h_1,\cdots,h_n)$ of $G$ such
that $\mu(x,h,L)<0$. The filtration $\mathcal
F^h$ of $W$ associated with $h$ coincides
with the tensor product filtration $\mathcal
F^{h_1}\otimes\cdots\otimes\mathcal F^{h_n}$,
where $\mathcal F^{h_i}$ is the filtration of
$V^{(i)}$ associated with $h_i$. Therefore,
\[\Lambda_x(\mathcal F^{h_1},\cdots,\mathcal F^{h_n})
=\frac{\mu(x,h,L)}{m(\|\mathcal
F^{h_1}\|^2+\cdots+\|\mathcal
F^{h_n}\|^2)^{\frac 12}}<0.\]

``$\Longleftarrow$'': Suppose that $(\mathcal
G^{(1)},\cdots,\mathcal G^{(n)})$ is an
element in $\mathbf{Fil}_{V^{(1)}}^{\mathbb Q
}\times\cdots\times\mathbf{Fil}_{V^{(n)}}^{\mathbb
Q}$ such that $\Lambda_x(\mathcal
G^{(1)},\cdots,\mathcal G^{(n)})<0$. By
equalities \eqref{Equ:dilation},
\eqref{Equ:dilatation et produit tensoriel}
and the invariance of $\Lambda_x$ by
dilation, we can assume that $\mathcal
G^{(1)},\cdots,\mathcal G^{(n)}$ are all
supported by $\mathbb Z$. In this case, there
exists, for each $1\le i\le n$, a
one-parameter subgroup $h_i$ of
$\mathbb{GL}(V^{(i)})$ such that $\mathcal
F^{h_i}=\mathcal G^{(i)}$. Let
$h=(h_1,\cdots,h_n)$. By combining the
negativity of $\Lambda_x(\mathcal
F^{h_1},\cdots,\mathcal F^{h_n})$ with
\eqref{Equ:calcul de mu}, we obtain
$\mu(x,h,L)<0$, so $x$ is not semistable.
\end{proof}

Proposition \ref{Pro:Totaro96 generla} below
generalizes Proposition 2 of \cite{Totaro96}.
The proof uses Lemma \ref{Pro:lemma 3 of
Totaro}, which is equivalent to Lemma 3 of
\cite{Totaro96}, or Lemma 1.1 of
\cite{Ramanan_Ramanathan}. See
\cite{Ramanan_Ramanathan} for the proof of
the lemma.

\begin{lemma}
\label{Pro:lemma 3 of Totaro} Let $n\ge 1$ be
an integer and let $\mathscr T$ be a finite
non-empty family of linear forms on $\mathbb
R^n$. Let $\Lambda:\mathbb
R^n\rightarrow\mathbb R$ such that
$\Lambda(y)=\|y\|^{-1}
\displaystyle\max_{l\in\mathscr T}l(y)\text{
for }y\neq 0$, and that $\Lambda(0)=0$.
Suppose that the function $\Lambda$ takes at
least a strictly negative value. Then
\begin{enumerate}[1)]
\item the function $\Lambda$ attains its
minimum value, furthermore, all points in
$\mathbb R^n$ minimizing $\Lambda$ are
proportional;
\item if $c$ is the minimal value of
$\Lambda$ and if $y_0\in\mathbb R^n$ is a
minimizing point of $\Lambda$, then for any
$y\in\mathbb R^n$,
\begin{equation}\label{Equ:minoration de Lambda(y)}\Lambda(y)\ge c\frac{\langle
y_0,y\rangle}{\|y_0\|\cdot\|y\|};\end{equation}
\item if in addition all linear forms in $\mathscr T$ are
of rational coefficients, then there exists a
point in $\mathbb Z^n$ which minimizes
$\Lambda$.
\end{enumerate}
\end{lemma}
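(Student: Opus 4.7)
The plan is to exploit the positive homogeneity $\Lambda(ty)=\Lambda(y)$ for $t>0$ to reduce to a compact optimization on the unit sphere $S^{n-1}$, to use strict convexity of the Euclidean norm together with sublinearity of $\max$ for uniqueness and the subgradient-type inequality, and finally to identify the minimizing direction with $-p^{\star}$ where $p^{\star}$ is the point of $\mathrm{conv}(A)$ nearest the origin, with $A\subset\mathbb Q^n$ the set of coefficient vectors of the forms in $\mathscr T$.

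For (1), the map $y\mapsto\max_{l\in\mathscr T}l(y)$ is continuous as a maximum of finitely many linear forms, so $\Lambda$ attains its minimum $c<0$ on the compact set $S^{n-1}$ and hence, by homogeneity, on all of $\mathbb R^n\setminus\{0\}$. For uniqueness up to proportionality, let $y_0,y_1\in S^{n-1}$ be two minimizers. I would first rule out $y_1=-y_0$: in that case $\max_l l(y_1)=-\min_l l(y_0)\ge -\max_l l(y_0)=-c>0$, contradicting $\Lambda(y_1)=c<0$. Assuming $y_0\ne\pm y_1$, strict convexity of the Euclidean norm gives $\|\tfrac12(y_0+y_1)\|<1$, while sublinearity of $\max$ yields $\max_l l(\tfrac12(y_0+y_1))\le c$; since $c<0$, this forces $\Lambda(\tfrac12(y_0+y_1))<c$, contradicting minimality. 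Since $\Lambda(0)=0>c$, the set of minimizers is exactly the ray $\mathbb R_{>0}\cdot y_0$.

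For (2), I would interpolate linearly. Normalizing $\|y_0\|=\|y\|=1$ by homogeneity, set $z_t:=(1-t)y_0+ty$ for small $t\in(0,1]$. Sublinearity of $\max$ gives $\max_l l(z_t)\le (1-t)c+t\Lambda(y)$, while minimality of $c$ applied at $z_t$ gives $\max_l l(z_t)\ge c\|z_t\|$. Combining and rearranging yields $\Lambda(y)\ge c\bigl(\|z_t\|-(1-t)\bigr)/t$; the expansion $\|z_t\|^2=1+2t(\langle y_0,y\rangle-1)+t^2\|y-y_0\|^2$ shows the right-hand side tends to $c\langle y_0,y\rangle$ as $t\to 0^{+}$, which gives the stated inequality after restoring the norms.

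For (3), I would pass to a convex-geometric description. Writing each $l\in\mathscr T$ as $l(y)=\langle\alpha_l,y\rangle$ with $\alpha_l\in\mathbb Q^n$ and setting $A=\{\alpha_l\}$, one has $\max_l l(y)=\sup_{p\in\mathrm{conv}(A)}\langle p,y\rangle$. Let $p^{\star}$ be the unique point of $\mathrm{conv}(A)$ closest to the origin. The obtuse-angle characterization $\langle\alpha-p^{\star},-p^{\star}\rangle\le 0$ for $\alpha\in A$ yields $\max_l l(-p^{\star})\le -\|p^{\star}\|^2$, while Cauchy--Schwarz applied to $\max_l l(y)\ge\langle p^{\star},y\rangle$ shows $\max_l l(y)/\|y\|\ge -\|p^{\star}\|$ for all $y\ne 0$. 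Hence $-p^{\star}$ is a minimizer with $c=-\|p^{\star}\|$, and by (1) every minimizer lies on $\mathbb R_{>0}\cdot(-p^{\star})$. Since $p^{\star}$ is the orthogonal projection of the origin onto the affine hull of the minimal face of $\mathrm{conv}(A)$ containing it, and that affine hull is defined over $\mathbb Q$, we get $p^{\star}\in\mathbb Q^n$; clearing denominators produces an integer minimizer.

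I expect (3) to be the delicate step: parts (1) and (2) follow directly from convexity and sublinearity, while the rationality assertion in (3) relies on the standard but non-trivial fact that the nearest point of a rational polytope to the origin is itself rational, which I would justify via the face-projection argument sketched above.
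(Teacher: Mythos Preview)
Your argument is correct. Note, however, that the paper does not actually prove this lemma: it simply records that the statement is equivalent to Lemma~3 of Totaro and Lemma~1.1 of Ramanan--Ramanathan, and refers to the latter for a proof. So there is no ``paper's own proof'' to compare against beyond that citation.

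Your treatment of (1) and (2) is the standard convexity argument and matches in spirit what one finds in those references: compactness of the sphere for existence, strict convexity of the Euclidean norm combined with sublinearity of $y\mapsto\max_l l(y)$ for uniqueness, and a first-order variation $z_t=(1-t)y_0+ty$ for the subgradient-type inequality. These steps are clean and need no further justification.

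Your approach to (3) is a genuine reformulation worth highlighting. Rather than arguing directly that the optimality conditions for $\Lambda$ on $S^{n-1}$ are rational linear equations, you dualize: identifying $\max_l l(y)$ with the support function of $\mathrm{conv}(A)$, you show that the minimizing direction is $-p^\star$ where $p^\star$ is the point of $\mathrm{conv}(A)$ nearest the origin, and then deduce $p^\star\in\mathbb Q^n$ from the fact that $p^\star$ is the orthogonal projection of $0$ onto the (rational) affine hull of the face containing it. This is slightly more conceptual than the usual argument and makes the rationality transparent. The one point you leave implicit is that $0\notin\mathrm{conv}(A)$ (equivalently $p^\star\neq 0$), but this is immediate from the hypothesis that $\Lambda$ takes a negative value: if $0$ were a convex combination of the $\alpha_l$, then $\max_l\langle\alpha_l,y\rangle\ge 0$ for every $y$.
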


\begin{proposition}\label{Pro:Totaro96 generla}
With the notation of Proposition
\ref{Pro:Hilbert Mumford Totaro criterion},
if $x$ is not semistable for the action of
$G$ relatively to $L$, then the function
$\Lambda_x$ attains its minimal value.
Furthermore, the element in
$\mathbf{Fil}_{V^{(1)}}^{\mathbb Q
}\times\cdots\times\mathbf{Fil}_{V^{(n)}}^{\mathbb
Q}$ minimizing $\Lambda_x$ is unique up to
dilatation. Finally, if $(\mathcal
F^{(1)},\cdots,\mathcal F^{(n)})$ is an
element in $\mathbf{Fil}_{V^{(1)}}^{\mathbb Q
}\times\cdots\times\mathbf{Fil}_{V^{(n)}}^{\mathbb
Q}$ minimizing $\Lambda_x$ and if $c$ is the
minimal value of $\Lambda_x$, then for any
element $(\mathcal G^{(1)},\cdots,\mathcal
G^{(n)})$ in $\mathbf{Fil}_{V^{(1)}}^{\mathbb
Q
}\times\cdots\times\mathbf{Fil}_{V^{(n)}}^{\mathbb
Q}$, the following inequality holds:
\begin{equation}\label{Equ:estimation de Lambda x}
\sum_{i=1}^n\mathbb E[\mathcal
G^{(i)}]-\lambda_{\mathcal
G^{(1)}\otimes\cdots\otimes\mathcal
G^{(n)}}(v_x)\ge c\frac{\langle\mathcal
F^{(1)},\mathcal
G^{(1)}\rangle+\cdots+\langle\mathcal
F^{(n)},\mathcal G^{(n)}\rangle}{(\|\mathcal
F^{(1)}\|^2+\cdots+\|\mathcal
F^{(n)}\|^2)^{\frac 12}}
\end{equation}
\end{proposition}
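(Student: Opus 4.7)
The plan is to restrict $\Lambda_x$ to finite-dimensional chambers of filtrations indexed by bases, apply Lemma~\ref{Pro:lemma 3 of Totaro} within a single chamber, and then extend the conclusions globally via Bruhat's decomposition together with the lemma's uniqueness clause.

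I would first fix bases $\mathbf{e}^{(i)}$ of each $V^{(i)}$ and consider the restriction of $\Lambda_x$ to the chamber $S_\mathbf{e}:=\prod_{i=1}^n\mathbf{Fil}_{\mathbf{e}^{(i)}}^{\mathbb Q}$. Via the bijections $\Phi_{\mathbf{e}^{(i)}}$, suitably rescaled in each factor by $1/\sqrt{r^{(i)}}$ so that Proposition~\ref{Pro:euclidean space of filtrations} identifies the filtration scalar product with the standard one on $\mathbb Q^{r^{(i)}}$, this chamber is mapped onto $\prod_i\mathbb Q^{r^{(i)}}$ equipped with the standard Euclidean structure. The expectations $\mathbb E[\mathcal G^{(i)}]$ are $\mathbb Q$-linear forms in the chamber coordinates by \eqref{Equ:expectation of a filtration}; and since $\mathbf{e}^{(1)}\otimes\cdots\otimes\mathbf{e}^{(n)}$ is compatible with every tensor product filtration $\mathcal G^{(1)}\otimes\cdots\otimes\mathcal G^{(n)}$ in $S_\mathbf{e}$, Proposition~\ref{Pro:lambda mathcal F as inf des formel ineares} expresses $\lambda_{\mathcal G^{(1)}\otimes\cdots\otimes\mathcal G^{(n)}}(v_x)$ as a minimum of finitely many $\mathbb Q$-linear forms. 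Consequently $\Lambda_x|_{S_\mathbf{e}}$ is the ratio of a maximum of rational linear forms to the Euclidean norm, fitting exactly the framework of Lemma~\ref{Pro:lemma 3 of Totaro}.

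Because $x$ is not semistable, Proposition~\ref{Pro:Hilbert Mumford Totaro criterion} furnishes a tuple on which $\Lambda_x<0$, and Bruhat's decomposition places this tuple in some chamber $S_{\mathbf{e}_0}$. Applying Lemma~\ref{Pro:lemma 3 of Totaro} in $S_{\mathbf{e}_0}$ then yields a rational minimizer $(\mathcal F^{(i)})$, unique up to scaling within that chamber, with value $c<0$, together with the inequality $\Lambda_x(\mathcal G)\geq c\,\langle\mathcal F,\mathcal G\rangle/(\|\mathcal F\|\,\|\mathcal G\|)$ for $(\mathcal G^{(i)})\in S_{\mathbf{e}_0}$. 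To promote this to a global statement, I would take an arbitrary $(\mathcal G^{(i)})\in\prod_i\mathbf{Fil}_{V^{(i)}}^{\mathbb Q}$ and, applying Bruhat separately inside each $V^{(i)}$ to the pair $(\mathcal F^{(i)},\mathcal G^{(i)})$, produce a basis $\mathbf{e}'$ simultaneously compatible with both tuples, so that they both lie in $S_{\mathbf{e}'}$. Lemma~\ref{Pro:lemma 3 of Totaro} inside $S_{\mathbf{e}'}$ then gives a local minimum value $c'\leq c$; one argues that $c'=c$ and that the local minimizer is proportional to $(\mathcal F^{(i)})$. Indeed, a strict inequality $c'<c$ would, upon iterating the chamber-switching procedure, generate a strictly decreasing sequence of chamber minima, but the numerator $\sum_i\mathbb E[\mathcal G^{(i)}]-\lambda_{\bigotimes\mathcal G^{(i)}}(v_x)$ is globally majorized by a constant times $(\sum_i\|\mathcal G^{(i)}\|^2)^{1/2}$ (by Cauchy--Schwarz applied to the explicit linear expressions above), so $\Lambda_x$ is bounded below, and a limit direction would eventually sit in a common chamber with an iterate, contradicting the within-chamber uniqueness. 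Once $c'=c$ is established, the inequality \eqref{Equ:minoration de Lambda(y)} of Lemma~\ref{Pro:lemma 3 of Totaro} applied in $S_{\mathbf{e}'}$, combined with the identification of filtration and Euclidean scalar products, yields \eqref{Equ:estimation de Lambda x}; attainment and global uniqueness up to dilation then follow immediately.

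The principal obstacle is precisely this globalization step: propagating the chamber-local minimum value and the uniqueness clause to every other chamber. The mechanism relies on Bruhat's decomposition (to guarantee a common chamber for any pair of filtration tuples), the uniqueness clause of Lemma~\ref{Pro:lemma 3 of Totaro}, and the global boundedness of $\Lambda_x$; this is essentially Kempf's uniqueness theorem for optimal destabilizing one-parameter subgroups, recast in the filtration language used here. All remaining steps are direct consequences of the material assembled in Sections~\ref{Sec:filtrations} and~\ref{Sec:More facts in geometric invariant theory}.
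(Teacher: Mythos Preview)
Your reduction to chambers and your use of Lemma~\ref{Pro:lemma 3 of Totaro} inside a fixed chamber are correct and match the paper. The gap is in the globalization step. Your proposed mechanism---iterate chamber-switching to produce a strictly decreasing sequence of chamber minima, invoke boundedness below, then appeal to a ``limit direction'' eventually sharing a chamber with an iterate---does not close. A bounded decreasing sequence of real numbers merely converges; it need not stabilize, and the minimizing filtrations themselves live in an infinite union of chambers with no evident compactness. The phrase ``a limit direction would eventually sit in a common chamber with an iterate'' has no content here: by Bruhat any two filtration tuples share a common chamber, so this gives no contradiction, and there is no reason the limit (if it even exists as a rational filtration) should coincide with any iterate up to dilation. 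You are essentially asserting Kempf's theorem rather than proving it.

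The paper bypasses this entirely with a finiteness observation you are missing: the function $\Lambda_x^{\mathbf e}$, viewed as a function on $\mathbb R^{r^{(1)}+\cdots+r^{(n)}}$, depends only on the combinatorial datum
\[
\Big\{S\subset\textstyle\prod_{i=1}^n\{1,\dots,r^{(i)}\}\;\Big|\;v_x\in\sum_{(j_1,\dots,j_n)\in S}Ke^{(1)}_{j_1}\otimes\cdots\otimes e^{(n)}_{j_n}\Big\},
\]
which takes only finitely many values as $\mathbf e$ ranges over all basis tuples. Hence there are only finitely many distinct chamber functions $\Lambda_x^{\mathbf e}$, each attaining its minimum by Lemma~\ref{Pro:lemma 3 of Totaro}, and the global infimum of $\Lambda_x$ is the least of these finitely many values---automatically attained at a rational point. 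Once global attainment is known, your outline for uniqueness and for inequality~\eqref{Equ:estimation de Lambda x} (place the minimizer and the test tuple in a common chamber via Bruhat, observe the minimizer is also the chamber minimizer there, apply parts 1) and 2) of Lemma~\ref{Pro:lemma 3 of Totaro}) is exactly what the paper does.
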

\begin{proof}
For each integer $1\le i\le n$, let
$\mathbf{e}^{(i)}=(e^{(i)}_j)_{1\le j\le
r^{(i)}}$ be a basis of $V^{(i)}$. Let
${\mathbf{e}}=(\mathbf{e}^{(i)})_{1\le i\le
n}$. Denote by $\Lambda_x^{{\mathbf e}}$ the
restriction of $\Lambda_x$ on
$\mathbf{Fil}_{\mathbf{e}^{(1)}}^{\mathbb Q
}\times\cdots\times\mathbf{Fil}_{\mathbf{e}^{(n)}}^{\mathbb
Q}$. The space
$\mathbf{Fil}_{\mathbf{e}^{(1)}}^{\mathbb Q
}\times\cdots\times\mathbf{Fil}_{\mathbf{e}^{(n)}}^{\mathbb
Q}$ is canonically embedded in
$\mathbf{Fil}_{\mathbf{e}^{(1)}}\times\cdots\times\mathbf{Fil}_{\mathbf{e}^{(n)}}$,
which can be identified with $\mathbb
R^{r^{(1)}}\times\cdots\times\mathbb
R^{r^{(n)}}$ through
$\Phi_{\mathbf{e}^{(1)}}\times\cdots\Phi_{\mathbf{e}^{(n)}}$
(see Proposition \ref{Pro:euclidean space of
filtrations}). We extend natually
$\Lambda_x^{{\mathbf{e}}}$ to a function
$\Lambda_x^{{\mathbf{e}},\dagger}$ on
$\mathbf{Fil}_{\mathbf{e}^{(1)}}\times\cdots\times\mathbf{Fil}_{\mathbf{e}^{(n)}}$,
whose numerator part is the maximal value of
a finite number of linear forms with rational
coefficients (see Proposition \ref{Pro:lambda
mathcal F as inf des formel ineares}) and
whose denominator part is just the norm of
vector in the Euclidean space. Then by Lemma
\ref{Pro:lemma 3 of Totaro}, the function
$\Lambda_x^{\mathbf{e},\dagger}$ attains its
minimal value, and there exists an element in
$\mathbf{Fil}_{\mathbf{e}^{(1)}}^{\mathbb Q
}\times\cdots\times\mathbf{Fil}_{\mathbf{e}^{(n)}}^{\mathbb
Q}$ which minimizes
$\Lambda_x^{\mathbf{e},\dagger}$. By
definition the same element also minimizes
$\Lambda_x^{\mathbf{e}}$. Since the function
$\Lambda_x^{\mathbf{e}}$, viewed as a
function on $\mathbb
R^{r^{(1)}+\cdots+r^{(n)}}$, only depends on
the set
\[\Big\{S\subset\prod_{i=1}^n\{1,\cdots,r^{(i)}\}\;\Big|\;
v_x\in\sum_{(j_1,\cdots,j_n)\in
S}Ke^{(1)}_{j_1}\otimes\cdots\otimes
e^{(n)}_{j_n} \Big\}.\] Therefore, there are
only a finite number of functions on
Euclidean space of dimension
$r^{(1)}+\cdots+r^{(n)}$ of the form
$\Lambda_x^{\mathbf{e}}$. Thus we deduce that
the function $\Lambda_x$ attains globally its
minimal value, and the minimizing element of
$\Lambda_x$ could be chosen in
$\mathbf{Fil}_{V^{(1)}}^{\mathbb Q
}\times\cdots\times\mathbf{Fil}_{V^{(n)}}^{\mathbb
Q}$.

Suppose that there are two elements in
$\mathbf{Fil}_{V^{(1)}}^{\mathbb Q
}\times\cdots\times\mathbf{Fil}_{V^{(n)}}^{\mathbb
Q}$ which minimizes $\Lambda_x$. By Bruhat's
decomposition, we can choose $\mathbf{e}$ as
above such that both elements lie in
$\mathbf{Fil}_{\mathbf{e}^{(1)}}^{\mathbb Q
}\times\cdots\times\mathbf{Fil}_{\mathbf{e}^{(n)}}^{\mathbb
Q}$. Therefore, by Lemma \ref{Pro:lemma 3 of
Totaro} they differ only by a dilation.
Finally to prove inequality
\eqref{Equ:estimation de Lambda x}, it
suffices to choose $\mathbf{e}$ such that
$(\mathcal F^{(1)},\cdots,\mathcal F^{(n)})$
and $(\mathcal G^{(1)},\cdots,\mathcal
G^{(n)})$ are both in
$\mathbf{Fil}_{\mathbf{e}^{(1)}}^{\mathbb Q
}\times\cdots\times\mathbf{Fil}_{\mathbf{e}^{(n)}}^{\mathbb
Q}$, and then apply Lemma \ref{Pro:lemma 3 of
Totaro} 2).
\end{proof}

Although the minimizing filtrations
$(\mathcal F^{(1)},\cdots,\mathcal F^{(n)}) $
in Proposition \ref{Pro:Totaro96 generla} are
{\it a priori} supported by $\mathbb Q$, it
is always possible to choose them to be
supported by $\mathbb Z$ after a dilation.

In the rest of the section, let $x$ be a
rational point of $\mathbb P(W^\vee)$ which
is {\bf not} semistable for the action of $G$
relatively to $L$. We fix an element
$(\mathcal F^{(1)},\cdots,\mathcal F^{(n)})$
in $\mathbf{Fil}_{V^{(1)}}^{\mathbb Z
}\times\cdots\times\mathbf{Fil}_{V^{(n)}}^{\mathbb
Z}$ minimizing $\Lambda_x$. Define
\begin{equation}\label{Equ:widetilde c}
\displaystyle\widetilde
c:=\frac{c}{(\|\mathcal
F^{(1)}\|^2+\cdots+\|\mathcal
F^{(n)}\|^2)^{\frac 12}}.
\end{equation}
Note that $\widetilde c<0$. Moreover, it is a
rational number since the following equality
holds:
\[\widetilde c=\frac{\Lambda_x(\mathcal
F^{(1)},\cdots,\mathcal F^{(n)})}{(\|\mathcal
F^{(1)}\|^2+\cdots+\|\mathcal
F^{(n)}\|^2)^{\frac 12}}=\frac{\mathbb
E[\mathcal F^{(1)}]+\cdots+\mathbb E[\mathcal
F^{(n)}]- \lambda_{\mathcal
F^{(1)}\otimes\cdots\otimes \mathcal
F^{(n)}}(v_x)}{\|\mathcal
F^{(1)}\|^2+\cdots+\|\mathcal F^{(n)}\|^2}.\]

We suppose that $\mathcal F^{(i)}$
corresponds to the flag
\[\mathscr D^{(i)}:V^{(i)}=V_0^{(i)}\supsetneq V_1^{(i)}\supsetneq\cdots\supsetneq
V_{d^{(i)}}^{(i)}=0\] and the strictly
increasing sequence of integers
$\lambda^{(i)}=(\lambda_j^{(i)})_{0\le j<
d^{(i)}}$. Let $\widetilde G$ be the
algebraic group
\[\widetilde G:=\prod_{i=1}^n\prod_{j=0}^{d^{(i)}-1}\mathbb{GL}(
V_{j}^{(i)}/V_{j+1}^{(i)}).\] Let $\mathcal
F=\mathcal
F^{(1)}\otimes\cdots\otimes\mathcal F^{(n)}$
and $\beta=\lambda_{\mathcal F}(v_x)$, which
is the largest integer $i$ such that
$v_x\in\mathcal F_iW$. Let $\widetilde
W:=\mathcal F_iW/\mathcal F_{i+1}W$ and let
$\widetilde v_x$ be the canonical image of
$v_x$ in $\widetilde W$. Notice that
\[\widetilde W=\sum_{\lambda_{j_1}^{(1)}+\cdots+\lambda_{j_n}^{(n)}\ge\beta}
\bigotimes_{i=1}^nV^{(i)}_{j_i}\Bigg/\sum_{\lambda_{j_1}^{(1)}+\cdots+
\lambda_{j_n}^{(n)}>\beta}
\bigotimes_{i=1}^nV^{(i)}_{j_i}\cong\bigoplus_{\lambda_{j_1}^{(1)}+
\cdots+\lambda_{j_n}^{(n)}=\beta}\bigotimes_{i=1}^n
\Big(V^{(i)}_{j_i}/V^{(i)}_{j_i+1}\Big).
\]
So the algebraic group $\widetilde G$ acts
naturally on $\widetilde W$. Let $\widetilde
x$ be the rational point of $\mathbb
P(\widetilde W^\vee)$ corresponding to the
subspace of $\widetilde W$ generated by
$\widetilde v_x$.

For all integers $i,j$ such that $1\le i\le
n$ and $0\le j< d^{(i)}$, let $r_j^{(i)}$ be
the rank of $V_j^{(i)}/V_{j+1}^{(i)}$ over
$K$. We choose a strictly positive integer
$N$ divisible by all $r^{(i)}=\rang_KV^{(i)}$
and such that, for any integers $i$ and $j$ satisfying $1\le i\le n$
and $0\le j< d^{(i)}$, the number
\[\displaystyle a_j^{(i)}:=-\frac{N\widetilde
c\lambda_j^{(i)}}{r^{(i)}}\] is an integer.
This is always possible since $\widetilde
c\in\mathbb Q$. The sequence
$(\lambda^{(i)}_j)_{0\le j<d^{(i)}}$ is
strictly increasing, so is
$\mathbf{a}^{(i)}:=(a_j^{(i)})_{0\le j<
d^{(i)}}$. Finally we define $\displaystyle
b_j^{(i)}:=\frac{N}{r^{(i)}}+a_j^{(i)}$.

We are now able to establish an explicit
version of Proposition 1.12 in
\cite{Ramanan_Ramanathan} for product of
general linear groups.

\begin{proposition}\label{Thm:theorem de Ramanan-Ramanathan}
Let $\widetilde{\pi}:\mathbb P(\widetilde
W^\vee)\rightarrow\Spec K$ be the canonical
morphism and let
\[\widetilde L:=\mathcal O_{\widetilde W^\vee}(N)\otimes\Big(
\bigotimes_{i=1}^n\bigotimes_{j=0}^{d^{(i)}-1}
\widetilde\pi^*\big(\Lambda^{r_j^{(i)}}(V_j^{(i)}/V_{j+1}^{(i)})\big)^{\otimes
b_j^{(i)}} \Big).\] Then the rational point
$\widetilde x$ of $\mathbb P(\widetilde
W^\vee)$ is semistable for the action of
$\widetilde G$ relatively to the
$G$-linearized line bundle $\widetilde{L}$.
\end{proposition}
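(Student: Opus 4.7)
The plan is to verify the Hilbert-Mumford criterion (Theorem \ref{Thm:Hilbert-Mumford criterion}): one must show $\mu(\widetilde x,\widetilde h,\widetilde L)\ge 0$ for every one-parameter subgroup $\widetilde h$ of $\widetilde G$. Such an $\widetilde h$ is the datum of a family $(\widetilde{\mathcal G}^{(i)}_j)_{1\le i\le n,\ 0\le j<d^{(i)}}$ of $\mathbb Z$-supported filtrations of the subquotients $V_j^{(i)}/V_{j+1}^{(i)}$; combining Propositions \ref{Pro:Totaro 1} and \ref{Totaro 2} (and using that $\widetilde W$ decomposes as a direct sum of tensor products of subquotients indexed by tuples satisfying $\lambda_{j_1}^{(1)}+\cdots+\lambda_{j_n}^{(n)}=\beta$), one obtains
\[\mu(\widetilde x,\widetilde h,\widetilde L)=\sum_{i,j}b_j^{(i)}r_j^{(i)}\mathbb E[\widetilde{\mathcal G}^{(i)}_j]-N\lambda_{\widetilde{\mathcal F}^{\widetilde h}}(\widetilde v_x),\]
where $\widetilde{\mathcal F}^{\widetilde h}$ denotes the filtration of $\widetilde W$ associated to $\widetilde h$.

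Following Totaro's strategy, I will apply the minimality inequality \eqref{Equ:estimation de Lambda x} to a suitable lift of $(\widetilde{\mathcal G}^{(i)}_j)$ to filtrations of each $V^{(i)}$. For a large positive integer $t$, define $\mathcal H^{(i)}_t\in\mathbf{Fil}_{V^{(i)}}^{\mathbb Z}$ by the subquotient construction of \S\ref{Subsec:Construction of filtration from subquotients}: its underlying flag is $\mathscr D^{(i)}$, and on the $j$-th subquotient it carries $\widetilde{\mathcal G}^{(i)}_j$ shifted by $t\lambda_j^{(i)}$. For $t$ sufficiently large the resulting sequence of filtration levels stays strictly increasing, and equations \eqref{Equ:esperance de filtration sousquotient} yield
\[\mathbb E[\mathcal H^{(i)}_t]=t\,\mathbb E[\mathcal F^{(i)}]+\frac{1}{r^{(i)}}\sum_j r_j^{(i)}\mathbb E[\widetilde{\mathcal G}^{(i)}_j],\]
\[\langle\mathcal F^{(i)},\mathcal H^{(i)}_t\rangle=t\|\mathcal F^{(i)}\|^2+\frac{1}{r^{(i)}}\sum_j r_j^{(i)}\lambda_j^{(i)}\mathbb E[\widetilde{\mathcal G}^{(i)}_j].\]

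What I expect to be the main technical obstacle is the identity
\[\lambda_{\mathcal H^{(1)}_t\otimes\cdots\otimes\mathcal H^{(n)}_t}(v_x)=t\beta+\lambda_{\widetilde{\mathcal F}^{\widetilde h}}(\widetilde v_x)\qquad\text{for }t\gg 0.\]
To prove it, pick bases $\mathbf e^{(i)}=(e_k^{(i)})$ of $V^{(i)}$ simultaneously compatible with $\mathcal F^{(i)}$ and with a lift of each $\widetilde{\mathcal G}^{(i)}_j$ on $V_j^{(i)}/V_{j+1}^{(i)}$; by Proposition \ref{Pro:lambda mathcal F as inf des formel ineares} the left-hand side equals the minimum, over tuples $(k_1,\dots,k_n)$ such that $v_x$ has a non-zero component on $e_{k_1}^{(1)}\otimes\cdots\otimes e_{k_n}^{(n)}$, of the quantity $t\sum_i\lambda_{\mathcal F^{(i)}}(e_{k_i}^{(i)})+\sum_i\lambda_{\widetilde{\mathcal G}^{(i)}_{\tau(k_i)}}(\overline{e_{k_i}^{(i)}})$. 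For $t$ large, the leading term forces the minimum to be attained at tuples realising $\sum_i\lambda_{\mathcal F^{(i)}}(e_{k_i}^{(i)})=\beta$; what remains of the minimum over these tuples is by construction $\lambda_{\widetilde{\mathcal F}^{\widetilde h}}(\widetilde v_x)$, once one unpacks the direct-sum decomposition of $\widetilde W$.

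With this identity in hand, plug $(\mathcal H^{(1)}_t,\ldots,\mathcal H^{(n)}_t)$ into \eqref{Equ:estimation de Lambda x}. Using the equality $\sum_i\mathbb E[\mathcal F^{(i)}]-\beta=\widetilde c\sum_i\|\mathcal F^{(i)}\|^2$ that is built into the definition of $\widetilde c$, the two $t$-linear contributions cancel exactly, leaving
\[\sum_{i,j}\frac{r_j^{(i)}}{r^{(i)}}\mathbb E[\widetilde{\mathcal G}^{(i)}_j]-\lambda_{\widetilde{\mathcal F}^{\widetilde h}}(\widetilde v_x)\ge\widetilde c\sum_{i,j}\frac{r_j^{(i)}\lambda_j^{(i)}}{r^{(i)}}\mathbb E[\widetilde{\mathcal G}^{(i)}_j].\]
Multiplying by $N$ and substituting $a_j^{(i)}=-N\widetilde c\lambda_j^{(i)}/r^{(i)}$ and $b_j^{(i)}=N/r^{(i)}+a_j^{(i)}$ rewrites this as $\mu(\widetilde x,\widetilde h,\widetilde L)\ge 0$, as required.
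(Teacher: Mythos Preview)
Your argument is correct and follows the same overall strategy as the paper: verify the Hilbert--Mumford criterion by lifting the subquotient filtrations $(\widetilde{\mathcal G}^{(i)}_j)$ to filtrations of the $V^{(i)}$ via the construction of \S\ref{Subsec:Construction of filtration from subquotients}, and then apply the minimality inequality \eqref{Equ:estimation de Lambda x}. The one substantive difference lies in how the key identity relating $\lambda$ on $W$ and on $\widetilde W$ is handled. The paper lifts the $\mathcal G^{(i),j}$ \emph{unshifted}, obtaining a filtration $\mathcal G^{(i)}$ of $V^{(i)}$, and simply asserts that $\lambda_{\mathcal G}(v_x)=\lambda_{\widetilde{\mathcal G}}(\widetilde v_x)$; from this \eqref{Equ:subquotient stability} and \eqref{Equ:semistbaility de xtilde} follow at once. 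You instead shift the $j$-th subquotient filtration by $t\lambda_j^{(i)}$ and prove the identity $\lambda_{\mathcal H_t^{(1)}\otimes\cdots\otimes\mathcal H_t^{(n)}}(v_x)=t\beta+\lambda_{\widetilde{\mathcal F}^{\widetilde h}}(\widetilde v_x)$ for $t\gg 0$; the point of the shift is precisely to force the minimum defining $\lambda$ to be realised only on the $\mathcal F$-level-$\beta$ components of $v_x$, a step the paper's bare assertion glosses over (indeed, without the shift the equality can fail when $v_x$ has nonzero components at higher $\mathcal F$-levels). The resulting $t$-linear contributions then cancel exactly because $\sum_i\mathbb E[\mathcal F^{(i)}]-\beta=\widetilde c\sum_i\|\mathcal F^{(i)}\|^2$ by the definition of $\widetilde c$, and you recover the same inequality \eqref{Equ:semistbaility de xtilde}. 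Your route is thus slightly longer but makes this technical point fully rigorous, and is closer to Totaro's original treatment.
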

\begin{proof}
For any integers $i$ and $j$ such that $1\le
i\le n$ and $0\le j< d^{(i)}$, we choose an
arbitrary filtration $\mathcal G^{(i),j}$ of
$V_j^{(i)}/V_{j+1}^{(i)}$ supported by
$\mathbb Z$. We have explained in Subsection
\ref{Subsec:Construction of filtration from
subquotients} how to construct a new
filtration $\mathcal G^{(i)}$ of $V^{(i)}$
from $\mathcal G^{(i),j}$. Let
\[\mathcal G=\bigotimes_{i=1}^n\mathcal G^{(i)},\qquad
\widetilde{\mathcal
G}=\bigoplus_{\lambda_{j_1}^{(1)}+\cdots+\lambda_{j_n}^{(n)}=\beta}
\bigotimes_{i=1}^n\mathcal G^{(i),j_i}.\]
From the construction we know that
$\lambda_{\mathcal
G}(v_x)=\lambda_{\widetilde{\mathcal
G}}(\widetilde v_x)$.  Using
\eqref{Equ:esperance de filtration
sousquotient}, the inequality
\eqref{Equ:estimation de Lambda x} implies:
\begin{equation}\label{Equ:subquotient stability}
\sum_{i=1}^n\sum_{j=0}^{d^{(i)}-1}\frac{r_j^{(i)}}{r^{(i)}}
\mathbb E[\mathcal
G^{(i),j}]-\sum_{i=1}^n\sum_{j=0}^{d^{(i)}-1}\frac{\widetilde
c\lambda_j^{(i)}r_j^{(i)}}{r^{(i)}} \mathbb
E[\mathcal
G^{(i),j}]-\lambda_{\widetilde{\mathcal
G}}(\widetilde v_x)\ge 0,
\end{equation}
where the constant $\widetilde c$ is defined
in \eqref{Equ:widetilde c}. Hence
\begin{equation}\label{Equ:semistbaility de xtilde}
\sum_{i=1}^n\sum_{j=0}^{d^{(i)}-1}
b_j^{(i)}r_j^{(i)}\mathbb E[\mathcal
G^{(i),j}]-N\lambda_{\widetilde {\mathcal
G}}(\widetilde v_x)\ge 0.\end{equation}

Let $h$ be an arbitrary one-parameter
subgroup of $\widetilde G$ corresponding to
filtrations $\mathcal G^{(i),j}$. By
Propositions \ref{Pro:Totaro 1} and
\ref{Totaro 2}, together with the fact that
$\mu(\widetilde x,h,\cdot)$ is a homomorphism
of groups, we obtain
\[\begin{split}
\mu(\widetilde x,h,\widetilde
L)&=\mu(\widetilde x,h,\mathcal O_{\widetilde
W^\vee}(N))+
\sum_{i=1}^n\sum_{j=0}^{d^{(i)}-1}
b_j^{(i)}r_j^{(i)}
\mathbb E[\mathcal G^{(i),j}]\\
&=-N\lambda_{\widetilde{\mathcal
G}}(\widetilde
v_x)+\sum_{i=1}^n\sum_{j=0}^{d^{(i)}-1}
b_j^{(i)}r_j^{(i)} \mathbb E[\mathcal
G^{(i),j}]\ge 0.
\end{split}\]
By Hilbert-Mumford criterion, the point
$\widetilde x$ is semistable for the action
of $\widetilde G$ relatively to $\widetilde
L$.
\end{proof}

Finally we point out the following
consequence of the inequality
\eqref{Equ:semistbaility de xtilde}.

\begin{proposition}\label{Pro:filtration minimisant est desperance 0}
The minimizing filtrations $(\mathcal
F^{(1)},\cdots,\mathcal F^{(n)})$ satisfy
\[\mathbb E[\mathcal F^{(1)}]=\cdots=\mathbb E[\mathcal F^{(n)}]=0.\]
In other words, the equality
$\sum_{j=0}^{d^{(i)}-1}a_j^{(i)}r_j^{(i)}=0$
holds, or equivalently,
$\sum_{j=0}^{d^{(i)}-1}\lambda_j^{(i)}r_j^{(i)}=0$
for any $i\in\{1,\cdots,n\}$.
\end{proposition}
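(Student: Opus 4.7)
My plan is to derive the vanishing $\mathbb{E}[\mathcal{F}^{(i)}] = 0$ by feeding a carefully chosen one-parameter family of auxiliary filtrations into the inequality \eqref{Equ:semistbaility de xtilde}. That inequality was established during the proof of Proposition \ref{Thm:theorem de Ramanan-Ramanathan} for \emph{arbitrary} $\mathbb{Z}$-supported filtrations $\mathcal{G}^{(i),j}$ on the subquotients $V_j^{(i)}/V_{j+1}^{(i)}$, and its uniformity in the $\mathcal{G}^{(i),j}$ is exactly what should allow me to isolate each individual expectation.

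Concretely, I would fix an index $i_0\in\{1,\ldots,n\}$ and an integer parameter $c$, take $\mathcal{G}^{(i),j}$ trivial for every $i\neq i_0$, and for every $j$ take $\mathcal{G}^{(i_0),j}$ to be the ``constant at level $c$'' filtration (the whole subquotient placed at $\lambda=c$). A short computation then gives $\mathbb{E}[\mathcal{G}^{(i_0),j}]=c$, the assembled filtration $\mathcal{G}^{(i_0)}$ on $V^{(i_0)}$ from Subsection \ref{Subsec:Construction of filtration from subquotients} is itself constant at level $c$, and hence so is the tensor filtration $\mathcal{G}$ on $W$; in particular $\lambda_{\widetilde{\mathcal{G}}}(\widetilde{v}_x)=\lambda_{\mathcal{G}}(v_x)=c$, using the identity $\lambda_{\mathcal{G}}(v_x)=\lambda_{\widetilde{\mathcal{G}}}(\widetilde{v}_x)$ already recorded in the preceding proof. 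Substituting these values into \eqref{Equ:semistbaility de xtilde} collapses it to
\[
c\,\Big(\sum_{j=0}^{d^{(i_0)}-1} b_j^{(i_0)} r_j^{(i_0)} - N\Big) \;\geq\; 0.
\]

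Since $c$ can be any integer of either sign, the bracketed quantity must vanish. The remaining step is routine book-keeping: expand $b_j^{(i_0)}=N/r^{(i_0)}+a_j^{(i_0)}$ and use $\sum_j r_j^{(i_0)}=r^{(i_0)}$ to reduce to $\sum_j a_j^{(i_0)} r_j^{(i_0)}=0$, then invoke $a_j^{(i_0)}=-N\widetilde{c}\lambda_j^{(i_0)}/r^{(i_0)}$ and divide by the non-zero rational $\widetilde{c}$ to conclude $\sum_j \lambda_j^{(i_0)} r_j^{(i_0)}=0$, i.e.\ $\mathbb{E}[\mathcal{F}^{(i_0)}]=0$. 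I expect the only mildly delicate point to be checking that the formula $\widetilde{\mathcal{G}}=\bigoplus_{\sum \lambda_{j_i}^{(i)}=\beta}\bigotimes_i \mathcal{G}^{(i),j_i}$ really produces a filtration constant at level $c$ on each summand of $\widetilde{W}$; this is immediate in our setting because in every summand exactly one tensor factor is the level-$c$ filtration $\mathcal{G}^{(i_0),j_{i_0}}$ while all other factors are trivial, so the direct-sum description of $\widetilde{W}$ does not actually intrude.
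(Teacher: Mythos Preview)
Your proposal is correct and follows essentially the same approach as the paper. The paper plugs in, for an arbitrary integer sequence $(u_i)_{1\le i\le n}$, the constant filtrations $\mathcal G^{(i),j}$ supported on $\{u_i\}$, obtains $\sum_i u_i\sum_j a_j^{(i)}r_j^{(i)}\ge 0$ from \eqref{Equ:semistbaility de xtilde}, and concludes by letting the $u_i$ vary; your choice $u_{i_0}=c$, $u_i=0$ for $i\neq i_0$ is exactly the special case needed to isolate each index, and your subsequent bookkeeping with the $b_j^{(i)}$ and $a_j^{(i)}$ is the same unpacking the paper performs implicitly.
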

\begin{proof}
Let $(u_i)_{1\le i\le n}$ be an arbitrary
sequence of integers. For all integers $i,j$
such that $1\le i\le n$ and $0\le j<d^{(i)}$,
let $\mathcal G^{(i),j}$ be the filtration of
$V_j^{(i)}/V_j^{(i+1)}$ which is supported by
$\{u_i\}$. Note that in this case
$\widetilde{\mathcal G}$ is supported by
$\{u_1+\cdots+u_n\}$. The inequality
\eqref{Equ:semistbaility de xtilde} gives
\[\sum_{i=1}^n\sum_{j=0}^{d^{(i)}-1}
b_j^{(i)}r_j^{(i)}u_i-N\sum_{i=1}^nu_i=\sum_{i=1}^n
u_i\sum_{j=0}^{d^{(i)}-1}
a_j^{(i)}r_j^{(i)}\ge 0.\] Since $(u_i)_{1\le
i\le n}$ is arbitrary, we obtain
$\sum_{j=0}^{d^{(i)}-1}a_j^{(i)}r_j^{(i)}=0$,
and therefore
$\sum_{j=0}^{d^{(i)}-1}\lambda_j^{(i)}r_j^{(i)}=0$.
\end{proof}

\section{A criterion of Arakelov semistability for Hermitian vector bundles}
\label{Sec:criterion of semistability}

\hskip\parindent We shall give a
semistability criterion for Hermitian vector
bundles, which is the arithmetic analogue of
a result due to Bogomolov in geometric
framework (see \cite{Raynaud81}).

Let $\overline E$ be a non-zero Hermitian
vector bundle over $\Spec\mathcal O_K$ and
let $V=E_K$. We denote by $r$ its rank. If
$\mathscr D:V=V_0\supsetneq
V_1\supsetneq\cdots\supsetneq V_d=0$ is a
flag of $V$, it induces a strictly decreasing
sequence of saturated sub-$\mathcal
O_K$-modules $E=E_0\supsetneq
E_1\supsetneq\cdots\supsetneq E_d=0$ of $E$.
For any integer $j$ such that $0\le j<d$, let
$r_j$ be the rank of $E_j/E_{j+1}$. If
$\mathbf{a}=(a_j)_{0\le j<d}$ is an element
in $r\mathbb Z^d$, we denote by
$\overline{\mathscr L}_{\hskip -2 pt\mathscr
D}^{\mathbf{a}}$ the Hermitian line bundle on
$\Spec\mathcal O_K$ as follows
\begin{equation}\label{Equ:L D a}
\overline{\mathscr L}_{\hskip -2
pt\mathscr D}^{\mathbf{a}}:=
\bigotimes_{j=0}^{d-1}\Big((\Lambda^{r_j}(\overline
E_j/\overline E_{j+1}))^{\otimes
a_j}\otimes(\Lambda^r\overline
E)^{\vee\otimes\frac{r_ja_j}{r}}\Big).\end{equation}
If $\mathbf{a}=(a_j)_{0\le j<d}\in\mathbb
Z^d$ satisfies $\sum_{j=0}^{d-1}r_ja_j=0$, we
define $\overline{\mathscr L}_{\hskip -2
pt\mathscr D}^{\mathbf{a}}:=
\bigotimes_{j=0}^{d-1}(\Lambda^{r_j}(\overline
E_j/\overline E_{j+1}))^{\otimes a_j}$.

\begin{proposition}\label{Pro:critere desemistabaility}
If the Hermitian vector bundle $\overline E$
is semistable (resp. stable), then for any
integer $d\ge 1$, any flag $\mathscr D$ of
length $d$ of $V$, and any strictly
increasing sequence $\mathbf{a}=(a_j)_{0\le
j<d}$ of integers either in $r\mathbb Z^d$,
or such that $\sum_{j=0}^{d-1}r_ja_j=0$, we
have $\widehat{\deg}(\overline{\mathscr
L}^{\mathbf{a}}_{\hskip -2pt\mathscr D})\le
0$ (resp. $\widehat{\deg}(\overline{\mathscr
L}^{\mathbf{a}}_{\hskip -2pt\mathscr D})<
0$).
\end{proposition}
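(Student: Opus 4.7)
The plan is to reduce both cases (whether $\mathbf{a}\in r\mathbb Z^d$ or $\sum_j r_ja_j=0$) to a single expression for $\widehat{\deg}(\overline{\mathscr L}^{\mathbf a}_{\hskip -2pt\mathscr D})$ involving the slopes of the graded pieces, and then apply Abel's summation to convert this into a sum indexed by the subbundles $\overline E_j$, where semistability can be applied directly.

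First I would compute the Arakelov degree explicitly. Using $\widehat{\deg}(\Lambda^{r_j}(\overline E_j/\overline E_{j+1}))=r_j\widehat\mu(\overline E_j/\overline E_{j+1})$ and $\widehat{\deg}(\Lambda^r\overline E)=r\widehat\mu(\overline E)$, in the first case one gets directly
\[
\widehat{\deg}(\overline{\mathscr L}^{\mathbf a}_{\hskip -2pt\mathscr D})=\sum_{j=0}^{d-1}a_jr_j\bigl(\widehat\mu(\overline E_j/\overline E_{j+1})-\widehat\mu(\overline E)\bigr).
\]
In the second case, the same expression is obtained after subtracting the vanishing quantity $\widehat\mu(\overline E)\sum_jr_ja_j=0$. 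So the two cases are merged.

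Next I would use Abel summation. Setting $S_k:=\sum_{j\ge k}r_j\bigl(\widehat\mu(\overline E_j/\overline E_{j+1})-\widehat\mu(\overline E)\bigr)$ and observing that $\widehat{\deg}(\overline E_k)=\sum_{j\ge k}r_j\widehat\mu(\overline E_j/\overline E_{j+1})$ by additivity of the Arakelov degree along the short exact sequences of the filtration, one identifies
\[
S_k=\rang(E_k)\bigl(\widehat\mu(\overline E_k)-\widehat\mu(\overline E)\bigr),\qquad 0\le k\le d,
\]
with $S_0=0$ and $S_d=0$. Writing $r_j(\widehat\mu(\overline E_j/\overline E_{j+1})-\widehat\mu(\overline E))=S_j-S_{j+1}$ and summing by parts gives
\[
\widehat{\deg}(\overline{\mathscr L}^{\mathbf a}_{\hskip -2pt\mathscr D})=\sum_{j=1}^{d-1}(a_j-a_{j-1})\,S_j.
\]

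Finally I would conclude. The coefficients $a_j-a_{j-1}$ are strictly positive by the assumption that $\mathbf a$ is strictly increasing. If $\overline E$ is semistable, then for $1\le j\le d-1$ the subbundle $\overline E_j$ is non-zero and satisfies $\widehat\mu(\overline E_j)\le\widehat\mu_{\max}(\overline E)=\widehat\mu(\overline E)$, so each $S_j\le 0$, yielding $\widehat{\deg}(\overline{\mathscr L}^{\mathbf a}_{\hskip -2pt\mathscr D})\le 0$. In the stable case, each $\overline E_j$ ($1\le j\le d-1$) is a proper non-zero Hermitian subbundle, hence $\widehat\mu(\overline E_j)<\widehat\mu(\overline E)$ strictly, so $S_j<0$ and the sum is strictly negative (for any non-trivial flag, i.e.\ $d\ge 2$). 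No serious obstacle is expected: the only point requiring care is the algebraic bookkeeping that unifies the two definitions of $\overline{\mathscr L}^{\mathbf a}_{\hskip -2pt\mathscr D}$, after which Abel summation does all the work.
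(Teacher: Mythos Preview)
Your proof is correct and follows essentially the same route as the paper: both compute $\widehat{\deg}(\overline{\mathscr L}^{\mathbf a}_{\hskip -2pt\mathscr D})=\sum_{j}a_jr_j\bigl(\widehat\mu(\overline E_j/\overline E_{j+1})-\widehat\mu(\overline E)\bigr)$, perform an Abel summation to obtain $\sum_{j=1}^{d-1}(a_j-a_{j-1})\rang(E_j)\bigl(\widehat\mu(\overline E_j)-\widehat\mu(\overline E)\bigr)$, and conclude from (semi)stability. Your remark on the degenerate case $d=1$ is apt; the paper's proof has the same empty sum there.
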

\begin{proof}
By definition,
\[\begin{split}
\widehat{\deg}(\overline{\mathscr L}_{\hskip
-2 pt\mathscr D}^{\mathbf{a}})
&=\sum_{j=0}^{d-1}a_j\left[-\frac{\rang(E_{j})
-\rang(E_{j+1})}{r}\widehat{\deg}(\overline E
)+\widehat{\deg}(\overline{E}_{j})-\widehat{\deg}(\overline
E_{j+1} )\right]\\
&=\sum_{j=0}^{d-1}a_j\bigg[\rang(E_{j})\Big(\widehat{\mu}(
\overline E_{j})-\widehat{\mu}(\overline E
)\Big)-\rang(E_{j+1})\Big(\widehat{\mu}(\overline
E_{j+1})-
\widehat{\mu}(\overline E)\Big)\bigg]\\
&=\sum_{j=1}^{d-1}(a_{j}-a_{j-1})\rang(E_j)\big(
\widehat{\mu}(\overline
E_j)-\widehat{\mu}(\overline E)\big).
\end{split}\]
If $\overline E$ is semistable (resp.
stable), then for any integer $j$ such that
$1\le j<d$, we have $\widehat{\mu}(\overline
E_j)\le\widehat{\mu}(\overline E)$ (resp.
$\widehat{\mu}(\overline
E_j)<\widehat{\mu}(\overline E)$). Hence
$\widehat{\deg}(\overline{\mathscr
L}^{\mathbf{a}}_{\hskip -2pt\mathscr D}))\le
0$ (resp. $\widehat{\deg}(\overline{\mathscr
L}^{\mathbf{a}}_{\hskip -2pt\mathscr
D}))<0$).
\end{proof}

\begin{remark}
The converse of Proposition \ref{Pro:critere
desemistabaility} is also true. Let $E_1$ be
a saturated sub-$\mathcal O_K$-module of $E$.
Consider the flag $\mathscr D:V\supsetneq
E_{1,K}\supsetneq 0$ and the integer sequence
$\mathbf{a}=(0,r)$. Then
$\widehat{\deg}(\overline{\mathscr L}
^{\mathbf{a}}_{\hskip -2 pt\mathscr{D}})
=r\rang(E_1)\big(\widehat{\mu}(\overline
E)-\widehat{\mu}(\overline E_1)\big)$.
Therefore $\widehat{\mu}(\overline
E_1)\le\widehat{\mu}(\overline E)$ (resp.
$\widehat{\mu}(\overline
E_1)<\widehat{\mu}(\overline E)$). Since
$E_1$ is arbitrary, the Hermitian vector
bundle $\overline E$ is semistable (resp.
stable).
\end{remark}

\section{Upper bound for the degree of a Hermitian line subbundle}\label{Sec:upper bound for the degree}

\hskip\parindent In this section, we shall
give an upper bound for the Arakelov degree
of a Hermitian line subbundle of a finite
tensor product of Hermitian vector bundles.
As explained in Section
\ref{Sec:Introduction}, we shall use the
results established in Section \ref{Sec:More
facts in geometric invariant theory}  to
reduce our problem to the case with
semistability condition (in geometric
invariant theory sense), which has already
been discussed in Section \ref{Sec:Upper
bound with hypothesis of semi-stability }. We
point out that, in order to obtain the same
estimation as \eqref{Equ:subline bundle
semistable} in full generality, we should
assume that all Hermitian vector bundles
$\overline E_i$ are semistable, as a price
paid for removing the semistability condition
for $M_K$.

We denote by $K$ a number field and by
$\mathcal O_K$ its integer ring. Let
$(\overline E^{(i)})_{1\le i\le n}$ be a
family of {\bf semistable} Hermitian vector
bundles on $\Spec\mathcal O_K$. For any
$i\in\{1,\cdots,n\}$, let $r^{(i)}$ be the
rank of $E^{(i)}$ and $V^{(i)}=E^{(i)}_K$.
Let $\overline E=\overline
E^{(1)}\otimes\cdots\otimes\overline E^{(n)}$
and $W=E_K$. We denote by $\pi:\mathbb
P(W^\vee)\rightarrow\Spec K$ the natural
morphism. The algebraic group
$G:=\mathbb{GL}_K(V^{(1)})\times_K\cdots\times_K\mathbb{GL}_K(V^{(n)})$
acts naturally on $\mathbb P(W^\vee)$. Let
$\overline M$ be a Hermitian line subbundle
of $\overline E$ and $m$ be a strictly
positive integer which is divisible by all
$r^{(i)}$'s.

\begin{proposition}\label{Pro:majoration of upper degree}
For any Hermitian line subbundle $\overline
M$ of $\overline
E^{(1)}\otimes\cdots\otimes\overline
E^{(n)}$, we have
\[\widehat{\deg}(\overline M)\le\sum_{i=1}^n\Big(
\widehat{\mu}(\overline E^{(i)})+\frac
12\log(\rang E^{(i)})\Big).\]
\end{proposition}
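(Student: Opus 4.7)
The plan is to split on whether the generic fiber $M_K$, viewed as a rational point $x$ of $\mathbb P(W^\vee)$, is semistable for the natural action of $G=\prod_i\mathbb{GL}_K(V^{(i)})$ relative to $\mathcal O_{W^\vee}(m)\otimes\pi^*L_K$ with $m$ divisible by all $r^{(i)}$ and $L=\bigotimes_i(\Lambda^{r^{(i)}}\overline E^{(i)})^{\otimes m/r^{(i)}}$. If this holds, the bound is precisely \eqref{Equ:subline bundle semistable}. The bulk of the work is therefore to reduce the non-semistable case to this one via the explicit Ramanan--Ramanathan construction of Section \ref{Sec:More facts in geometric invariant theory}, combined with the arithmetic Bogomolov criterion of Section \ref{Sec:criterion of semistability}.

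Assume $x$ is not semistable. I invoke Proposition \ref{Thm:theorem de Ramanan-Ramanathan} to produce minimizing filtrations $(\mathcal F^{(i)})_i$ supported by $\mathbb Z$, the integer $\beta:=\lambda_{\mathcal F^{(1)}\otimes\cdots\otimes\mathcal F^{(n)}}(v_x)$, the graded piece $\widetilde W$, and the reduced rational point $\widetilde x\in\mathbb P(\widetilde W^\vee)$, which is semistable for the $\widetilde G$-action relative to the explicit line bundle $\widetilde L$. Each flag $\mathscr D^{(i)}$ lifts to a decreasing chain of saturated Hermitian subbundles $\overline E^{(i)}_j\subset\overline E^{(i)}$ whose subquotients $\overline F^{(i,j)}:=\overline E^{(i)}_j/\overline E^{(i)}_{j+1}$ are naturally Hermitian. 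Assembling them yields the Hermitian vector bundle
\[\overline{\widetilde E}:=\bigoplus_{\alpha\in\mathcal A}\bigotimes_{i=1}^n\overline F^{(i,\alpha(i))},\qquad\mathcal A:=\Big\{(j_1,\ldots,j_n)\;\Big|\;\textstyle\sum_i\lambda^{(i)}_{j_i}=\beta\Big\},\]
whose generic fiber is canonically $\widetilde W$. Since $v_x\in\mathcal F_\beta W\setminus\mathcal F_{\beta+1}W$, the composite $M\hookrightarrow E\twoheadrightarrow\widetilde E$ is non-zero; let $\overline{\widetilde M}$ be the saturated Hermitian line subbundle of $\overline{\widetilde E}$ that it generates. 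At every finite place the induced morphism $\overline M\to\overline{\widetilde M}$ is $\mathcal O_K$-linear, hence of non-positive local height, and at every archimedean place it factors as the isometric inclusion $\overline M\hookrightarrow\overline E$ composed with the orthogonal projection $\overline E\twoheadrightarrow\overline{\widetilde E}$, of operator norm $\le 1$. The slope inequality \eqref{Equ:slope inequality} thus gives $\widehat{\deg}(\overline M)\le\widehat{\deg}(\overline{\widetilde M})$.

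Next, I apply Theorem \ref{Thm:majoration de degre d'Arakelov avec l'hypo de semistability} to $\overline{\widetilde M}\subset\overline{\widetilde E}$, with $m$-parameter $N$ and $b$-parameters $b^{(i)}_jr^{(i)}_j$; the semistability hypothesis is supplied by Proposition \ref{Thm:theorem de Ramanan-Ramanathan}, and non-negativity of the exponents follows from Theorem \ref{Thm:semistability et determinant} applied to $\widetilde x$. The result reads
\[\widehat{\deg}(\overline{\widetilde M})\le\sum_{i=1}^n\sum_{j=0}^{d^{(i)}-1}\frac{b^{(i)}_jr^{(i)}_j}{N}\Big(\widehat{\mu}(\overline F^{(i,j)})+\tfrac12\log r^{(i)}_j\Big).\]
The rest is arithmetic: substituting $b^{(i)}_j=N/r^{(i)}+a^{(i)}_j$ and using Proposition \ref{Pro:filtration minimisant est desperance 0} (which gives $\sum_j a^{(i)}_jr^{(i)}_j=0$, hence $\sum_j b^{(i)}_jr^{(i)}_j=N$), the slope part simplifies to
\[\sum_{i=1}^n\widehat{\mu}(\overline E^{(i)})+\sum_{i=1}^n\frac{1}{N}\widehat{\deg}\big(\overline{\mathscr L}^{\mathbf{a}^{(i)}}_{\hskip -2pt\mathscr D^{(i)}}\big),\]
where the second sum is $\le 0$ by Proposition \ref{Pro:critere desemistabaility} applied to the semistable $\overline E^{(i)}$ with the strictly increasing sequence $\mathbf{a}^{(i)}$. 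The logarithmic contribution is bounded by $\tfrac12\sum_i\log r^{(i)}$ via $\log r^{(i)}_j\le\log r^{(i)}$ combined again with $\sum_j b^{(i)}_jr^{(i)}_j=N$.

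The hard part is not any single step but the coordination of constants: the vanishing of the expectations $\mathbb E[\mathcal F^{(i)}]$ from Proposition \ref{Pro:filtration minimisant est desperance 0} is exactly what makes both the $\widehat\mu$-telescoping work (collapsing $\sum_j\widehat{\deg}(\overline F^{(i,j)})$ into $\widehat{\deg}(\overline E^{(i)})$ with the right weight $1/r^{(i)}$) and the logarithmic bound tight (the weighted sum $\sum_j b^{(i)}_jr^{(i)}_j$ equals $N$, not merely $O(N)$); losing either would introduce extra factors of $r^{(i)}$ and spoil the promised $\tfrac12\log r^{(i)}$ correction.
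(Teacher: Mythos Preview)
Your proof is correct and follows essentially the same route as the paper: split on GIT-semistability of $M_K$, invoke the explicit Ramanan--Ramanathan construction (Proposition~\ref{Thm:theorem de Ramanan-Ramanathan}) in the unstable case, apply Theorem~\ref{Thm:majoration de degre d'Arakelov avec l'hypo de semistability} to the semistable reduced point, and simplify using Propositions~\ref{Pro:filtration minimisant est desperance 0} and~\ref{Pro:critere desemistabaility}. The only difference is cosmetic: you make explicit the intermediate inequality $\widehat{\deg}(\overline M)\le\widehat{\deg}(\overline{\widetilde M})$ (via the height~$\le 0$ of the subquotient map), whereas the paper absorbs this step silently and writes $\widehat{\deg}(\overline M)$ directly on the left when applying Theorem~\ref{Thm:majoration de degre d'Arakelov avec l'hypo de semistability}.
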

\begin{proof}
We have proved that if
$M_K$ is semistable for the action of $G$
relatively to $\mathcal
O_{W^\vee}(m)\otimes\pi^*\Big(\bigotimes_{i=1}^n
(\Lambda^{r^{(i)}}V^{(i)})^{\otimes
m/r^{(i)}}\Big)$, where $m$ is a strictly positive integer which is divisible by all $r^{(i)}$, then the following
inequality holds:
\[\widehat{\deg}(\overline M)\le\sum_{i=1}^n\Big(\widehat{\mu}(
\overline E_i)+\frac 12\log r^{(i)}\Big).\]

If this hypothesis of semistability is not
fulfilled, by Proposition \ref{Thm:theorem de
Ramanan-Ramanathan}, there exist two strictly
positive integers $N$ and $\beta$, and for
any $i\in\{1,\cdots,n\}$,
\begin{enumerate}[1)]
\item a flag
\[\mathscr D^{(i)}:V^{(i)}=V_0^{(i)}\supsetneq V_1^{(i)}
\supsetneq\cdots\supsetneq V_{d^{(i)}}^{(i)}=0\] of
$V^{(i)}$ corresponding to the sequence
\[E^{(i)}=E_0^{(i)}\supsetneq E_1^{(i)}
\supsetneq\cdots\supsetneq E_{d^{(i)}}^{(i)}=0\] of
saturated sub-$\mathcal O_K$-modules of $E$,
\item two strictly increasing sequence $\lambda^{(i)}=
(\lambda^{(i)}_j)_{0\le j< d^{(i)}}$ and
$\mathbf{a}^{(i)}=(a_j^{(i)})_{0\le j<d^{(i)}}$ of
integers,
\end{enumerate}
such that
\begin{enumerate}[i)]
\item $N$ is divisible by all $r^{(i)}$'s,
\item for any integer $i$ such that $1\le i\le n$,
$\sum_{j=0}^{d^{(i)}-1}a_j^{(i)}r_j^{(i)}=0$,
where
$r_j^{(i)}=\rang(V_j^{(i)}/V_{j+1}^{(i)})$,
\item the inclusion of $M$ in $E$ factorizes through
$\displaystyle\sum_{\lambda_{i_1}^{(1)}+\cdots\lambda_{i_n}^{(n)}\ge
\beta}E_{i_1}^{(1)}\otimes\cdots\otimes
E_{i_n}^{(n)}$,
\item the canonical image of $M_K$ in
\[\widetilde W:=\sum_{\lambda_{j_1}^{(1)}+\cdots+\lambda_{j_n}^{(n)}\ge\beta}
\bigotimes_{i=1}^nV^{(i)}_{j_i}\Bigg/\sum_{\lambda_{j_1}^{(1)}+\cdots+
\lambda_{j_n}^{(n)}>\beta}
\bigotimes_{i=1}^nV^{(i)}_{j_i}\cong\bigoplus_{\lambda_{j_1}^{(1)}+
\cdots+\lambda_{j_n}^{(n)}=\beta}\bigotimes_{i=1}^n
\Big(V^{(i)}_{j_i}/V^{(i)}_{j_i+1}\Big).
\]is non-zero, and is semistable for the action of the
group \[\widetilde
G:=\prod_{i=1}^n\prod_{j=0}^{d^{(i)}-1}\mathbb{GL}(
V_{j}^{(i)}/V_{j+1}^{(i)})\] relatively to
\[\mathcal O_{\widetilde W^\vee}(N)\otimes\Big(
\bigotimes_{i=1}^n\bigotimes_{j=0}^{d^{(i)}-1}
\widetilde\pi^*\big(\Lambda^{r_j^{(i)}}(V_j^{(i)}/V_{j+1}^{(i)})\big)^{\otimes
b_j^{(i)} }\Big),\] where
$\widetilde\pi:\mathbb P(\widetilde
W^\vee)\rightarrow\Spec K$ is the canonical
morphism, and
$b_j^{(i)}=N/r^{(i)}+a_j^{(i)}$.
\end{enumerate}
Note that $\bigotimes_{j=0}^{d^{(i)}-1}\big(
\Lambda^{r_j^{(i)}}(\overline
E_j^{(i)}/\overline
E_{j+1}^{(i)})\big)^{\otimes a_j^{(i)}}$ is
nothing other than $\overline{\mathscr
L}_{\hskip -2 pt\mathscr
D^{(i)}}^{\mathbf{a}^{(i)}}$ defined in
\eqref{Equ:L D a}.

Applying Theorem \ref{Thm:majoration de degre
d'Arakelov avec l'hypo de semistability}, we
get
\[\begin{split}\widehat{\deg}(\overline M)&\le\frac{1}{N}
\sum_{i=1}^n\sum_{j=0}^{d^{(i)}-1}\frac{N}{r^{(i)}}\big(\widehat{\deg}
(\overline
E_j^{(i)})-\widehat{\deg}(\overline
E_{j+1}^{(i)})\big)+\frac{1}{N}\sum_{i=1}^n\widehat{\deg}
\overline{\mathscr L}_{\hskip -2 pt\mathscr
D^{(i)}}^{\mathbf{a}^{(i)}}+\sum_{i=1}^n\sum_{j=0}^{d^{(i)}-1}
\frac{r_j^{(i)}b_j^{(i)}}{2N}\log r_j^{(i)}\\
&=\sum_{i=1}^n\widehat{\mu}(\overline
E^{(i)})+\frac{1}{N}\sum_{i=1}^n\widehat{\deg}
\overline{\mathscr L}_{\hskip -2 pt\mathscr
D^{(i)}}^{\mathbf{a}^{(i)}}+\sum_{i=1}^n\sum_{j=0}^{d^{(i)}-1}
\frac{r_j^{(i)}b_j^{(i)}}{2N}\log r_j^{(i)}\\
&\le \sum_{i=1}^n\widehat{\mu}(\overline
E^{(i)})+\sum_{i=1}^n\sum_{j=0}^{d^{(i)}-1}
\frac{r_j^{(i)}b_j^{(i)}}{2N}\log r_j^{(i)},
\end{split}\]
where the last inequality is because
$\overline E^{(i)}$'s are Arakelov semistable
(see Proposition \ref{Pro:critere
desemistabaility}). By Theorem
\ref{Thm:semistability et determinant},The
semistability of the canonical image of $M_K$
implies that $b_j^{(i)}\ge 0$. Therefore
\[\widehat{\deg}(\overline M)\le
\sum_{i=1}^n\widehat{\mu}(\overline
E^{(i)})+\sum_{i=1}^n\sum_{j=0}^{d^{(i)}-1}
\frac{r_j^{(i)}b_j^{(i)}}{2N}\log r^{(i)}.
\]
Since
$\sum_{j=0}^{d^{(i)}-1}r_j^{(i)}a_j^{(i)}=0$
for any integer $i$ such that $1\le i\le n$ (see
Proposition \ref{Pro:filtration minimisant
est desperance 0}), we have proved the
proposition.
\end{proof}

\begin{corollary}\label{Cor:majoration de max slp}
The following inequality is verified:
\begin{equation}\label{Equ:controle de pente maxi
sous cond de
semi}\widehat{\mu}_{\max}(\overline
E^{(1)}\otimes\cdots \otimes\overline
E^{(n)})\le\sum_{i=1}^n\Big(\widehat{\mu}(\overline
E^{(i)})+\log(\rang
E^{(i)})\Big)+\frac{\log|\Delta_K|}{2[K:\mathbb
Q]}.\end{equation}
\end{corollary}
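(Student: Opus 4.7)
The plan is to combine Proposition \ref{Pro:majoration of upper degree}, which has just been established, with the variant of Minkowski's First Theorem due to Bost and K\"unnemann recalled in \eqref{Equ:Bost-Kunnemann}. The first bounds the Arakelov degree of any Hermitian line subbundle of the tensor product, while the second compares the maximal slope of a Hermitian vector bundle to the maximal Arakelov degree of its line subbundles, up to a term involving $\log \rang$ and the discriminant.

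More precisely, set $\overline E = \overline E^{(1)} \otimes \cdots \otimes \overline E^{(n)}$. First I would apply Proposition \ref{Pro:majoration of upper degree} to every Hermitian line subbundle $\overline M$ of $\overline E$; taking the supremum over $\overline M$ yields
\[
\mathrm{u}\widehat{\deg}_n(\overline E) \le \sum_{i=1}^n \Big( \widehat{\mu}(\overline E^{(i)}) + \tfrac{1}{2} \log(\rang E^{(i)}) \Big).
\]
Next I would invoke \eqref{Equ:Bost-Kunnemann} applied to $\overline E$, which gives
\[
\widehat{\mu}_{\max}(\overline E) \le \mathrm{u}\widehat{\deg}_n(\overline E) + \tfrac{1}{2}\log(\rang E) + \frac{\log |\Delta_K|}{2[K:\mathbb{Q}]}.
\]
Since $\rang E = \prod_{i=1}^n \rang E^{(i)}$, we have $\log \rang E = \sum_{i=1}^n \log \rang E^{(i)}$. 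Substituting and combining the two half-contributions $\tfrac{1}{2} \log \rang E^{(i)}$ into a single $\log \rang E^{(i)}$ gives precisely \eqref{Equ:controle de pente maxi sous cond de semi}.

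There is no real obstacle here: the genuine work lies upstream, in Proposition \ref{Pro:majoration of upper degree} (and before that in Sections \ref{Sec:Upper bound with hypothesis of semi-stability } and \ref{Sec:More facts in geometric invariant theory}, where the semistability reduction via Ramanan--Ramanathan is carried out). The corollary is simply the passage from ``degree of a line subbundle'' to ``maximal slope,'' and the $\tfrac{1}{2} \log \rang E^{(i)}$ error terms from Proposition \ref{Pro:majoration of upper degree} and \eqref{Equ:Bost-Kunnemann} combine neatly into the $\log \rang E^{(i)}$ appearing on the right-hand side, so no additional estimation is required.
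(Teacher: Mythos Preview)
Your proposal is correct and follows exactly the same approach as the paper: take the supremum over line subbundles in Proposition \ref{Pro:majoration of upper degree} to bound $\mathrm{u}\widehat{\deg}_n(\overline E)$, then apply \eqref{Equ:Bost-Kunnemann} and use $\log\rang E=\sum_i\log\rang E^{(i)}$ to merge the two $\tfrac12\log\rang E^{(i)}$ contributions. The paper's proof is terser but identical in substance.
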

\begin{proof}
Since the Hermitian line bundle $\overline M$
in Proposition \ref{Pro:majoration of upper
degree} is arbitrary, we obtain
\[\mathrm{u}\widehat{\deg}_n(\overline E^{(1)}
\otimes\cdots\otimes\overline E^{(n)})\le
\sum_{i=1}^n\Big(\widehat{\mu}(\overline
E^{(i)})+\frac 12\log(\rang E^{(i)})\Big).\]
Combining with \eqref{Equ:Bost-Kunnemann} we
obtain \eqref{Equ:controle de pente maxi sous
cond de semi}.
\end{proof}

\section{Proof of Theorem \ref{Thm:main theorem}}

We finally give the proof of Theorem
\ref{Thm:main theorem}.

\begin{lemma}\label{Lem:lemma pour thm principal}
Let $K$ be a number field and $\mathcal O_K$
be its integer ring. Let $(\overline
E_i)_{1\le i\le n}$ be a finite family of
non-zero Hermitian vector bundles
(non-necessarily semistable) and $\overline
E=\overline E_1\otimes\cdots\otimes\overline
E_n$. Then the following inequality holds:
\[\widehat{\mu}_{\max}(\overline E)\le
\sum_{i=1}^n\Big(\widehat{\mu}_{\max} (E_i)+\log(\rang
E_i)\Big)+\frac{\log|\Delta_K|}{2[K:\mathbb Q]}.\]
\end{lemma}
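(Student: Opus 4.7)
My plan is to proceed by induction on the complexity parameter $\kappa:=\sum_{i=1}^n(n_i-1)$, where $n_i$ denotes the length of the Harder--Narasimhan filtration of $\overline E_i$ (whose existence for Hermitian vector bundles was recalled in Section \ref{Sec:Introduction}, following Stuhler and Grayson). When $\kappa=0$, every $\overline E_i$ is semistable, so $\widehat{\mu}(\overline E_i)=\widehat{\mu}_{\max}(\overline E_i)$ and the required inequality is precisely Corollary \ref{Cor:majoration de max slp}.

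For the inductive step, I would suppose that some factor, say $\overline E_1$, is not semistable, and consider the short exact sequence $0\to\overline Q\to\overline E_1\to\overline S\to 0$ in which $\overline Q=\overline E_{1,\des}$ is the destabilizing Hermitian subbundle. Then $\overline Q$ is semistable with $\widehat{\mu}(\overline Q)=\widehat{\mu}_{\max}(\overline E_1)$, while $\overline S$ has Harder--Narasimhan filtration of length $n_1-1$ and satisfies $\widehat{\mu}_{\max}(\overline S)<\widehat{\mu}_{\max}(\overline E_1)$. Tensoring with $\overline E_2\otimes\cdots\otimes\overline E_n$ (which is flat over $\mathcal O_K$, with the natural Hermitian structure) yields a short exact sequence
\[0\longrightarrow\overline Q\otimes\overline E_2\otimes\cdots\otimes\overline E_n\longrightarrow\overline E\longrightarrow\overline S\otimes\overline E_2\otimes\cdots\otimes\overline E_n\longrightarrow 0\]
in which each outer term has complexity strictly less than $\kappa$. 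Applying the induction hypothesis to each, and using $\widehat{\mu}_{\max}(\overline Q),\widehat{\mu}_{\max}(\overline S)\le\widehat{\mu}_{\max}(\overline E_1)$ together with $\rang Q,\rang S\le\rang E_1$, both outer maximal slopes are bounded by $\sum_{i=1}^n\bigl(\widehat{\mu}_{\max}(\overline E_i)+\log\rang E_i\bigr)+\frac{\log|\Delta_K|}{2[K:\mathbb Q]}$.

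To combine these two bounds I would invoke the auxiliary statement that, for any short exact sequence $0\to\overline A\to\overline B\to\overline C\to 0$ of Hermitian vector bundles on $\Spec\mathcal O_K$, one has $\widehat{\mu}_{\max}(\overline B)\le\max\bigl(\widehat{\mu}_{\max}(\overline A),\widehat{\mu}_{\max}(\overline C)\bigr)$. This I would prove by taking a destabilizing Hermitian subbundle $\overline N\subset\overline B$ and splitting it via $0\to\overline{N\cap A}\to\overline N\to\overline{N/(N\cap A)}\to 0$ with the induced sub- and quotient metrics, noting that $\overline{N\cap A}$ embeds isometrically into $\overline A$ and that the natural generic-fibre injection $\overline{N/(N\cap A)}\hookrightarrow\overline C$ has height $\le 0$ at every place, because the quotient metric inherited from $\overline N$ dominates the one pulled back from $\overline C$. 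The slope inequality \eqref{Equ:slope inequality} then gives $\widehat{\mu}(\overline{N\cap A})\le\widehat{\mu}_{\max}(\overline A)$ and $\widehat{\mu}(\overline{N/(N\cap A)})\le\widehat{\mu}_{\max}(\overline C)$, and averaging over ranks yields the desired bound on $\widehat{\mu}(\overline N)=\widehat{\mu}_{\max}(\overline B)$. The main obstacle will be this metric bookkeeping for the sub- and quotient structures; once it is in place the induction closes immediately.
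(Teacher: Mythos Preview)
Your argument is correct and complete. The auxiliary lemma $\widehat{\mu}_{\max}(\overline B)\le\max\bigl(\widehat{\mu}_{\max}(\overline A),\widehat{\mu}_{\max}(\overline C)\bigr)$ holds for exactly the reason you give: with $\overline N=\overline B_{\des}$, the intersection $N\cap A$ is automatically saturated in $N$ (because $A$ is saturated in $B$), the sub-metric on $\overline{N\cap A}$ from $\overline N$ agrees with that from $\overline A$, and the quotient metric on $\overline{N/(N\cap A)}$ dominates the one pulled back from $\overline C$, so \eqref{Equ:slope inequality} applies with $h(\varphi)\le 0$; then $\widehat\mu(\overline N)$ is a convex combination of the two slopes. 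The metric compatibilities you need in the inductive step (that the tensor of a sub/quotient metric with a Hermitian metric is again the sub/quotient metric of the tensor) are immediate from orthogonal splittings, so the ``bookkeeping'' obstacle is not serious.

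The paper proceeds differently: rather than inducting on $\sum_i(n_i-1)$, it treats an arbitrary saturated $F\subset E$ directly. Using the full Harder--Narasimhan filtrations of all the $\overline E_i$ simultaneously, it locates in one stroke a single tensor product of \emph{semistable} HN subquotients $(\overline F_i/\overline G_i)$ to which $F$ maps nontrivially, and then applies \eqref{Equ:ineqality de pentes eux} together with Corollary \ref{Cor:majoration de max slp}. Your induction unwinds this same filtration one step at a time, trading the single non-vanishing argument for the exact-sequence lemma above. Both routes rest on the semistable case (Corollary \ref{Cor:majoration de max slp}); the paper's is shorter, while yours isolates a reusable structural fact about $\widehat{\mu}_{\max}$ in extensions.
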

\begin{proof}
Let $F$ be a sub-$\mathcal O_K$-module of
$E$. By taking Harder-Narasimhan flags of
$E_i$'s (cf. \cite{BostBour96}), there
exists, for any $i$ such that $1\le i\le n$, a semistable
subquotient $\overline F_i/\overline G_i$ of
$E_i$ such that
\begin{enumerate}[1)]
\item $\widehat{\mu}(\overline F_i/\overline G_i)
\le\widehat{\mu}_{\max}(\overline E_i)$,
\item the inclusion homomorphism from $F$ to $E$
factorises through $F_1\otimes\cdots\otimes
F_n$,
\item the canonical image of $F$ in $(F_1/G_1)\otimes\cdots\otimes(F_n/
G_n)$ does not vanish.
\end{enumerate}
Combining with the slope inequality
\eqref{Equ:ineqality de pentes eux},
Corollary \ref{Cor:majoration de max slp}
implies that
\[\begin{split}
\widehat{\mu}_{\min}(\overline
F)&\le\sum_{i=1}^n\Big(\widehat{\mu}(\overline
F_i/\overline
G_i)+\log(\rang(F_i/G_i))\Big)+\frac{\log|\Delta_K|}{2[K:\mathbb
Q]}\\
&\le\sum_{i=1}^n\Big(\widehat{\mu}_{\max}(\overline
E_i)+\log(\rang
E_i)\Big)+\frac{\log|\Delta_K|}{2[K:\mathbb Q]}.
\end{split}\]
Since $F$ is arbitrary, the proposition is proved.
\end{proof}

{\noindent\bf Proof of Theorem \ref{Thm:main
theorem}} Let $N\ge 1$ be an arbitrary
integer. On one hand, by Lemma \ref{Lem:lemma
pour thm principal}, we have, by considering
$\overline E^{\otimes N}$ as
$\underbrace{\overline
E_1\otimes\cdots\otimes\overline
E_1}_{N\text{
copies}}\otimes\cdots\otimes\underbrace{\overline
E_n\otimes\cdots\otimes\overline
E_n}_{N\text{ copies}}$, that
\begin{equation*}\label{Equ:avant le dernier etape}\widehat{\mu}_{\max}(\overline E^{\otimes N})
\le\sum_{i=1}^n
N\Big(\widehat{\mu}_{\max}(\overline
E_i)+\log(\rang
E_i)\Big)+\frac{\log|\Delta_K|}{2[K:\mathbb
Q]}.\end{equation*} On the other hand, by
Corollary \ref{Cor:minormation de maximal
pent of tensor produc},
$\widehat{\mu}_{\max}(\overline E^{\otimes
N})\ge N\widehat{\mu}_{\max}(\overline E)$.
Hence
\[\widehat{\mu}_{\max}(\overline E)\le\sum_{i=1}^n\Big(
\widehat{\mu}_{\max}(\overline
E_i)+\log(\rang
E_i)\Big)+\frac{\log|\Delta_K|}{2N[K:\mathbb
Q]}.\] Since $N$ is arbitrary, we obtain by
taking $N\rightarrow+\infty$,
\[\widehat{\mu}_{\max}(\overline E)\le\sum_{i=1}^n\Big(
\widehat{\mu}_{\max}(\overline
E_i)+\log(\rang E_i)\Big),\] which completes
the proof.


\bibliography{chen}
\bibliographystyle{alpha}

%

\end{document}